\documentclass[12pt]{amsart}

\usepackage[T1]{fontenc}
\usepackage{amsmath,amssymb,amsthm}
\usepackage{aliascnt,enumerate,shuffle}
\usepackage[margin=24mm]{geometry}
\usepackage{xcolor}
\definecolor{mylinkcolor}{RGB}{16, 156, 81}
\definecolor{mycitecolor}{RGB}{20, 80, 140}
\usepackage{hyperref}
\usepackage[nameinlink]{cleveref}
\hypersetup{
    setpagesize=false,
    bookmarksnumbered=true,
    bookmarksopen=true,
    colorlinks=true,
    linkcolor=mylinkcolor,
    citecolor=mycitecolor,
}

\numberwithin{equation}{section}
\allowdisplaybreaks[2]

\theoremstyle{plain}
\newtheorem{thm}{Theorem}[section]
\crefname{thm}{Theorem}{Theorems}

\newaliascnt{lem}{thm}
\newtheorem{lem}[lem]{Lemma}
\aliascntresetthe{lem}
\crefname{lem}{Lemma}{Lemmas}

\newaliascnt{prop}{thm}
\newtheorem{prop}[prop]{Proposition}
\aliascntresetthe{prop}
\crefname{prop}{Proposition}{Propositions}

\newaliascnt{cor}{thm}
\newtheorem{cor}[cor]{Corollary}
\aliascntresetthe{cor}
\crefname{cor}{Corollary}{Corollaries}

\newaliascnt{conj}{thm}
\newtheorem{conj}[conj]{Conjecture}
\aliascntresetthe{conj}
\crefname{conj}{Conjecture}{Conjectures}

\newaliascnt{dfn}{thm}
\newtheorem{dfn}[dfn]{Definition}
\aliascntresetthe{dfn}
\crefname{dfn}{Definition}{Definitions}

\newaliascnt{rem}{thm}
\newtheorem{rem}[rem]{Remark}
\aliascntresetthe{rem}
\crefname{rem}{Remark}{Remarks}

\newaliascnt{ex}{thm}
\newtheorem{ex}[ex]{Example}
\aliascntresetthe{ex}
\crefname{ex}{Example}{Examples}

\newtheorem{mainthm}{Main Theorem}

\crefname{mainthm}{Main Theorem}{Main Theorems}
\newaliascnt{mainconj}{mainthm}
\newtheorem{mainconj}[mainconj]{Main Conjecture}
\aliascntresetthe{mainconj}
\crefname{mainconj}{Main Conjecture}{Main Conjectures}

\crefname{section}{Section}{Sections}
\crefname{subsection}{Section}{Sections}

\newcommand{\odd}{\mathrm{odd}}
\newcommand{\even}{\mathrm{even}}
\newcommand{\QQ}{\mathbb{Q}}
\newcommand{\ZZ}{\mathbb{Z}}
\DeclareMathOperator{\Span}{span}
\DeclareMathOperator{\SL}{SL}
\DeclareMathOperator{\Fil}{Fil}
\DeclareMathOperator{\wt}{wt}
\DeclareMathOperator{\dep}{dep}

\newcommand{\h}{\mathfrak H}
\newcommand{\A}{\mathcal A}
\newcommand{\ZA}{{\mathcal Z_{\mathcal A}}}
\newcommand{\Zf}{{\mathcal Z^{\mathrm f}}}
\newcommand{\ZAf}{{\mathcal Z_{\mathcal A}^{\mathrm f}}}
\newcommand{\ZAw}[1]{{\mathcal Z_{\mathcal A,#1}}}
\newcommand{\ZAfw}[1]{{\mathcal Z_{\mathcal A,#1}^{\mathrm f}}}
\newcommand{\Zbarf}{{\overline{\mathcal Z}^{\mathrm f}}}
\newcommand{\za}[1]{\zeta_{\mathcal A}(#1)}
\newcommand{\zs}[1]{\zeta_{\mathcal S}(#1)}
\newcommand{\zf}[1]{\zeta^{\mathrm f}(#1)}
\newcommand{\zaf}[1]{\zeta_{\mathcal A}^{\mathrm f}(#1)}
\newcommand{\zsf}[1]{\zeta_{\mathcal S}^{\mathrm f}(#1)}
\newcommand{\zzf}[1]{\mathfrak z^{\mathrm f}(#1)}
\newcommand{\Dk}{\mathcal D_k}
\newcommand{\Dtk}{\widetilde{\mathcal D}_k}
\newcommand{\Dbk}{\overline{\mathcal D}_k}
\newcommand{\emptyword}{\mathbf 1}

\title{Formal finite multiple zeta values}

\author{Henrik Bachmann}
\address{Graduate School of Mathematics, Nagoya University, Nagoya, Japan.}
\email{henrik.bachmann@math.nagoya-u.ac.jp}

\author{Risan}
\address{Nagoya, Japan.}
\email{ptrrsn.1@gmail.com}

\date{}

\subjclass[2020]{Primary 11M32; Secondary 11F11}
\keywords{finite multiple zeta values, symmetric multiple zeta values, formal multiple zeta values, Kaneko-Zagier conjecture, period polynomials}

\begin{document}

\begin{abstract}
We study the algebra of formal finite multiple zeta values by imposing the
stuffle and linear shuffle relations of Kaneko and Zagier. Our first results
are a surjective homomorphism to the algebra of formal symmetric multiple
zeta values and a parity reduction in depth at most four. In even weight we
construct a quotient of the formal double zeta space which surjects onto the
space of finite multiple zeta values of depth at most four, and we attach to
every even period polynomial an explicit relation among the values
$\zeta_{\mathcal A}(2a,1,2b,1)$.
\end{abstract}

\maketitle

\section{Introduction}

For an index $(k_1,\ldots,k_r) \in \ZZ_{\geq 1}^r$ with $k_1\geq2$, the \emph{multiple zeta
value} is defined by
\begin{align}\label{eq:intro-mzv}
 \zeta(k_1,\ldots,k_r)
 =\sum_{m_1>\cdots>m_r>0}
 \frac{1}{m_1^{k_1}\cdots m_r^{k_r}}.
\end{align}
We write $\mathcal Z$ for the $\QQ$-algebra generated by these values.
Multiple zeta values satisfy the stuffle and shuffle product relations. Their
regularized combination gives the extended double shuffle relations of Ihara,
Kaneko and Zagier~\cite{IKZ}, which are conjectured to generate all algebraic
relations among multiple zeta values.

Kaneko and Zagier introduced two finite counterparts of
\eqref{eq:intro-mzv}~\cite{KZ}. The first are the finite multiple zeta values
\begin{align}\label{eq:intro-fmzv}
 \za{k_1,\ldots,k_r}
 =\left(\sum_{p>m_1>\cdots>m_r>0}
 \frac{1}{m_1^{k_1}\cdots m_r^{k_r}}\bmod p\right)_p
 \in\A,
\end{align}
where
\[
 \A=\prod_{p\text{ prime}}\ZZ/p\ZZ\bigg/\bigoplus_{p\text{ prime}}\ZZ/p\ZZ.
\]
The second are the symmetric multiple zeta values
$\zs{k_1,\ldots,k_r}\in\mathcal Z/\zeta(2)\mathcal Z$, defined as alternating
sums of products of regularized multiple zeta values. Although the two
definitions are very different, both families satisfy the stuffle product and
the same linear shuffle relations. Kaneko and Zagier conjecture that these
relations give all relations among finite multiple zeta values and that the
assignment
\begin{align}\label{eq:intro-kz}
 \za{k_1,\ldots,k_r}\longmapsto\zs{k_1,\ldots,k_r}
\end{align}
induces an isomorphism
$\ZA\simeq\mathcal Z/\zeta(2)\mathcal Z$. It is not known that
\eqref{eq:intro-kz} is well defined.

Motivated by these conjectures, we define the algebra
$\ZAf$ of \emph{formal finite multiple zeta values} by imposing precisely the
stuffle and linear shuffle relations, and we denote the class of an index by
$\zaf{k_1,\ldots,k_r}$. Starting from the algebra $\Zf$ of formal multiple
zeta values, we also define formal symmetric multiple zeta values
\[
 \zsf{k_1,\ldots,k_r}\in
 \Zbarf:=\Zf/\zf{2}\Zf.
\]
Our first result is a formal version of the Kaneko--Zagier correspondence.

\begin{mainthm}\label{main:formal-kz}
There is a surjective homomorphism of graded $\QQ$-algebras
\[
 \begin{aligned}
 \varphi:\ZAf&\longrightarrow\Zbarf,\\
 \zaf{k_1,\ldots,k_r}&\longmapsto\zsf{k_1,\ldots,k_r}.
 \end{aligned}
\]
The classical finite and symmetric multiple zeta values are quotient
realizations of $\ZAf$.
\end{mainthm}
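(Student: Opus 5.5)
Since $\ZAf$ is defined as the quotient of the stuffle algebra $(\h^1,*)$ (words in $z_k=yx^{k-1}$) by the ideal generated by the linear shuffle relations of Kaneko--Zagier, the plan is to define $\varphi$ first on $\h^1$ and then check that it kills those relations. On $\h^1$ we set $\widetilde\varphi(z_{k_1}\cdots z_{k_r})=\zsf{k_1,\ldots,k_r}$, where
\[
 \zsf{k_1,\ldots,k_r}=\sum_{i=0}^{r}(-1)^{k_{i+1}+\cdots+k_r}\,
 \zf{k_1,\ldots,k_i}^{*}\,\zf{k_r,\ldots,k_{i+1}}^{*}\bmod \zf{2}\Zf ,
\]
with $\zf{\cdot}^{*}$ the stuffle-regularized formal values ($T=0$). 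Because $\Zf$ carries the extended double shuffle relations, $\zf{\cdot}^*:(\h^1,*)\to\Zf$ is an algebra map, and the shuffle-regularized version differs from it by the Ihara--Kaneko--Zagier comparison map $\rho$.

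\emph{Stuffle compatibility.} I would prove $\widetilde\varphi(w*w')=\widetilde\varphi(w)\widetilde\varphi(w')$ by the standard Hopf-algebraic argument: write $\zsf{\cdot}$ as the convolution $\mu\circ(Z^*\otimes (Z^*\circ S'))\circ\Delta$, where $\Delta$ is deconcatenation and $S'$ is reversal with sign $(-1)^{\wt}$. Since $(\h^1,*,\Delta)$ is a commutative Hopf algebra and reversal-with-sign is an algebra anti-automorphism-up-to-twist for $*$ (in fact the antipode composed with the stuffle-dual map), the convolution of two algebra maps from a commutative bialgebra is an algebra map. This step is formal and should cause no trouble, and it already holds in $\Zf$ before reducing mod $\zf{2}$.

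\emph{Linear shuffle relations.} The relations to be killed are $\sum (-1)^{\wt(\mathbf l)}\,\zaf{\mathbf k\shuffle \overline{\mathbf l}}$-type identities: for words $u,v$, $\;u\shuffle \tau(v)\equiv(-1)^{\wt v}\, u\overleftarrow{v}$-type expressions, i.e.\ $\zaf{u\shuffle v}=(-1)^{\wt v}\zaf{u\,\overline v}$ in Kaneko--Zagier's form. Here I would switch to shuffle regularization: first show that $\zsf{\cdot}$ can equally be computed with $\zf{\cdot}^{\shuffle}$ modulo $\zf{2}$ (the correction $\rho$ involves $\exp(\sum (-1)^n\zeta(n)T^n/n)$, and after symmetrization the $T$-dependence cancels and only even zetas, hence multiples of $\zf{2}$, survive — this needs that $\zf{2n}\in\QQ\zf{2}^n$ in $\Zf$, which follows from double shuffle in weight $2n$). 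Then in the shuffle picture, $\zsf{\cdot}$ is the convolution of the shuffle-regularized map with its reversal on the shuffle Hopf algebra $\h$ with deconcatenation in letters $x,y$, and the linear shuffle relation becomes the statement that the convolution with the shuffle antipode $S(w)=(-1)^{|w|}\overleftarrow w$ is trivial. I expect this step — matching the letter-level deconcatenation in $\h$ with the index-level deconcatenation in $\h^1$, handling words ending in $x$ via regularization, and controlling the mod-$\zf{2}$ error terms — to be the main obstacle; I would follow Jarossay/Ono's proof for actual symmetric MZVs verbatim, noting it uses only double shuffle and hence holds formally.

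\emph{Surjectivity and realizations.} Since $\Zf$ is spanned by $\zf{\mathbf k}$ for admissible $\mathbf k$, and formally (by the same argument as Yasuda's theorem, which only uses double shuffle) every $\zf{\mathbf k}\bmod\zf2$ is a $\QQ$-combination of $\zsf{\cdot}$, e.g.\ by induction on depth using $\zsf{\mathbf k}=\zf{\mathbf k}+(\text{lower depth products})$ modulo $\zf2$; this gives surjectivity. Grading is clear since all relations are homogeneous. Finally the classical maps $\zaf{\mathbf k}\mapsto\za{\mathbf k}$ and $\zaf{\mathbf k}\mapsto\zs{\mathbf k}$ are well defined because Kaneko--Zagier proved both families satisfy stuffle and the linear shuffle relations, and they are surjective by definition of $\ZA$ and $\mathcal Z/\zeta(2)\mathcal Z$ (the latter using Yasuda), so both are quotients of $\ZAf$.
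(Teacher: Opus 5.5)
Your architecture matches the paper's. Stuffle compatibility comes from convolving the two stuffle characters $Z^{\mathrm f,*}\circ R\circ\epsilon_{\wt}$ and $Z^{\mathrm f,*}$; the only input is that $R$ and $\epsilon_{\wt}$ are automorphisms of $\h^1_*$, and no antipode enters. $T$-independence and the comparison modulo $\zf{2}\Zf$ go through $A^{\mathrm f}(X)A^{\mathrm f}(-X)$, and the realizations come from Kaneko--Zagier and Yasuda.

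\textbf{Linear shuffle step.} The argument that proves the linear shuffle relation is missing, and your reformulation of it is wrong. The antipode axiom $\sum_{uv=w}S(u)\shuffle v=0$ holds for every shuffle character; it only says $\Phi^{-1}\Phi=1$. It does not give $Z_{\mathcal S}(w\shuffle v)=(-1)^{\wt(w)}Z_{\mathcal S}(R(w)v)$. The paper instead uses the index-level identity $S\circ S_i=S$ of Kaneko--Zagier (Proposition~10). Here $S_i(z_{k_1}\cdots z_{k_r})=(-1)^{k_1+\cdots+k_i}(z_{k_i}\cdots z_{k_1})\shuffle(z_{k_{i+1}}\cdots z_{k_r})$ and $S=\sum_iS_i$. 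For any shuffle character $Z$ one has $Z_{\mathcal S,Z}=Z\circ S$. Since $S_d(R(w)v)=(-1)^{\wt(w)}w\shuffle v$ for $d=\dep(w)$, the relation follows at once. This avoids any letter-level bookkeeping and any realization-specific proof that would need to be transferred.

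If you want your Hopf-algebraic route, the mechanism is primitivity, not the antipode axiom. Work in the convention $z_k=x^{k-1}y$ and put $\Phi=\sum_W Z^{\mathrm f,\shuffle}(W)\,W$, extended to all of $\h$ by shuffle regularization. Then $Z_{\mathcal S}^{\mathrm f,\shuffle}(w)$ is the coefficient of $yw$ in $\Phi^{-1}y\Phi$. This holds because the occurrences of $y$ in $yw$ are exactly the index cuts, and for $U=yx^{k_1-1}\cdots yx^{k_i-1}$ one has $(-1)^{|U|}\overleftarrow{U}=(-1)^{k_1+\cdots+k_i}z_{k_i}\cdots z_{k_1}$. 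The series $P=\Phi^{-1}y\Phi$ is primitive. A primitive series satisfies $\langle P,uav\rangle=(-1)^{|u|}\langle P,a(\overleftarrow{u}\shuffle v)\rangle$ for a letter $a$. Combined with $yR(w)=\overleftarrow{w}\,y$, this is exactly the linear shuffle relation. One of these two arguments has to be supplied; citing a proof for the real values is not enough unless you name the identity it rests on.

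\textbf{Surjectivity.} Your parenthetical induction is false. In your formula the $i=0$ term, $(-1)^{\wt(\boldsymbol k)}\zf{k_r,\ldots,k_1}$, also has full depth. So $\zsf{\boldsymbol k}$ is not $\zf{\boldsymbol k}$ plus products of lower depth. Already in depth one, $\zsf{k}=(1+(-1)^k)\zeta^{\mathrm f,*}(k)$ is $0$ in $\Zbarf$ for every $k$ (using \eqref{eq:formal-euler} for even $k$). So the odd single values never appear as leading terms. In the paper's convention they first appear in depth two, as $\zsf{k-1,1}=k\zf{k}$ (\cref{prop:formal-symmetric-depth-two}). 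This drop in depth is exactly why Yasuda's theorem is hard. The paper does not reprove it; it only notes, following Yasuda's own remark, that his argument uses nothing beyond the regularized double shuffle relations and so runs in $\Zf$. Keep that justification and drop the induction.
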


The surjectivity is the formal version of Yasuda's theorem. Yasuda notes that
his proof uses only regularized double shuffle relations and therefore applies
to any realization satisfying them, see the introduction and the proof of the
main theorem of~\cite{Yasuda}. Applying
that argument to the universal formal algebra $\Zf$, we obtain the result above.
Injectivity remains open and is the formal analogue of the Kaneko--Zagier
conjecture.

Our second result concerns the depth filtration. The Kaneko--Zagier
correspondence is expected to decrease depth by one. Together with the parity
theorem for multiple zeta values~\cite{Tsumura,Panzer}, this suggests that a
formal finite value of weight $k$ and depth $d$ should reduce to lower depth
when $k\equiv d\pmod2$. The corresponding statement is known for symmetric
multiple zeta values in every depth~\cite[Corollary to Theorem~4]{KZ}, but it
does not imply the result in the universal formal algebra or for finite
multiple zeta values. Denote by $\Fil_d\ZAfw{k}$ the span of formal finite multiple zeta values of
weight $k$ and depth $\leq d$, and by $\Fil_d\ZAw{k}$ the analogous span of
finite multiple zeta values.

\begin{mainthm}\label{main:parity}
Let $k,d\geq1$ with $k\equiv d\pmod2$. If $d\leq4$, then
\[
 \Fil_d\ZAfw{k}\subseteq\Fil_{d-1}\ZAfw{k}.
\]
\end{mainthm}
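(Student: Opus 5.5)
We work entirely inside $\ZAf$, using only the two defining families of relations: the stuffle product and the linear shuffle relations of Kaneko--Zagier. The latter state that for indices $\mathbf k,\mathbf l$,
\[
 \sum_{\mathbf m}c_{\mathbf m}\,\zaf{\mathbf m}
 =(-1)^{\wt(\mathbf l)}\,\zaf{\mathbf k,\overline{\mathbf l}},
\]
where $\mathbf k\shuffle\mathbf l=\sum c_{\mathbf m}\mathbf m$ and $\overline{\mathbf l}$ denotes the reversed index. Two consequences are used throughout; we derive them first, each in every depth.

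\begin{enumerate}[(a)]
\item \emph{Reversal.} Taking $\mathbf k=\emptyword$ gives $\zaf{k_d,\ldots,k_1}=(-1)^{k}\zaf{k_1,\ldots,k_d}$.
\item \emph{Antipode.} The stuffle antipode identity $\sum_{i}(-1)^{i}\,\zaf{k_i,\ldots,k_1}*\zaf{k_{i+1},\ldots,k_d}=0$ expresses $\zaf{k_1,\ldots,k_d}+(-1)^d\zaf{k_d,\ldots,k_1}$ modulo products and lower depth.
\end{enumerate}

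Combining (a) and (b) when $k\equiv d\pmod 2$ gives $2\zaf{k_1,\ldots,k_d}\in\Fil_{d-1}+(\text{products})$. Every product of two formal finite values of total depth $d$ expands by stuffle into terms of depth $\le d$. So the real issue is to control the top-depth part of the products $\zaf{\mathbf a}\zaf{\mathbf b}$ with $\dep\mathbf a+\dep\mathbf b=d$ and both depths positive. We proceed by induction on $d$.

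For $d=1$: the statement is that $\zaf{k}=0$ for $k$ odd, and in fact for all $k$. This follows from (a) with $d=1$ and from the shuffle relation with $\mathbf k=(k-1),\mathbf l=(1)$, which is the standard derivation in~\cite{KZ}; it also gives $\zaf{k}=0$ in general. For $d=2$: the only products are $\zaf{a}\zaf{b}=0$, so (a)+(b) closes the argument directly.

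For $d=3$: with $k$ odd, the products have the shape $\zaf{a}\zaf{b,c}=0$ or $\zaf{a,b}\zaf{c}=0$, so again (a)+(b) suffices. For $d=4$: the surviving products are $\zaf{a,b}\zaf{c,e}$ with $a+b+c+e$ even. Each such product is a product of two depth-two values whose weight parities are either both even or both odd. If both are odd, i.e.\ each factor has weight $\equiv$ depth, then by the $d=2$ case each factor lies in $\Fil_1=0$, so the product vanishes. If both are even, the product does not obviously drop in depth.

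That even--even case is the main obstacle: we must show that for even $a+b$ and $c+e$ the depth-four part of $\zaf{a,b}*\zaf{c,e}$, namely the sum of shuffles of $(a,b)$ and $(c,e)$ as sequences, lies in $\Fil_3$. My plan is to compare the stuffle expansion with a linear shuffle relation. Applying the linear shuffle relation with $\mathbf k=(a,b)$ and $\mathbf l=(c,e)$ expresses a shuffle-type combination, in the sense of the word $x^{a-1}y\cdots$, through the single value $\zaf{a,b,e,c}$. The hope is to rewrite each term of depth four using (a), (b) and the already-proved depth-$\le 3$ reductions, and thereby isolate the quasi-shuffle sum. If a direct identity does not emerge, the fallback is a generating-function computation: encode all depth-four double shuffle relations in weight $k$ modulo $\Fil_3$ as functional equations in four variables, in the spirit of the depth-graded treatment of Ihara--Kaneko--Zagier. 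We would then verify that the symmetrized product polynomials lie in the span of the shuffle relations. This reduces to an identity among polynomials that is checkable by a finite symmetry argument over $S_4$-type actions together with the Kaneko--Zagier linear shuffle relations. I expect this depth-four linear algebra to be the bulk of the work; it is also the reason the statement is restricted to $d\le4$.
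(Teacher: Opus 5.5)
There is a genuine gap, and it sits in the case you dismiss. In depth four you argue that if $\zaf{a,b}$ and $\zaf{c,e}$ both have odd weight, then ``each factor has weight $\equiv$ depth'' and so vanishes by the case $d=2$. This is a parity slip. Each factor has depth $2$, so the case $d=2$ only kills depth-two values of \emph{even} weight.

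Consequently the even--even products, which you single out as the main obstacle, vanish trivially. The odd--odd products are the real obstruction, and they are not zero. For instance $\zaf{2,1}\neq0$: $\varphi$ sends it to $3\zf{3}$, which is nonzero in $\Zbarf$ because $\zf{3}\neq0$ and $\zf{2}\Zf$ has no weight-three part.

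So after your steps (a) and (b) you still have to show $\zaf{a,b}\zaf{c,e}\in\Fil_3\ZAfw{k}$ when $a+b$ and $c+e$ are both odd. Your proposal contains no argument for this. Even for the case you did flag, you only describe a hope and an unexecuted fallback computation. A ``finite symmetry check'' would also have to work uniformly in the weight $k$, which you do not address.

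Two ideas are missing.

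\emph{The parity dictionary.} You need the explicit depth-two evaluation $\zaf{a,b}=(-1)^a\binom{a+b}{a}\zzf{a+b}$, obtained from the recurrence $(a+1)\zaf{a+1,b}+(b+1)\zaf{a,b+1}=0$. It turns the antipode formula into the congruence $\zaf{a,b,c,d}\equiv\frac12(-1)^{a+c}\binom{r}{a}\binom{s}{c}\zzf{r}\zzf{s}$ modulo $\Fil_3\ZAfw{k}$, where $r=a+b$ and $s=c+d$ are odd. When $r$ is even the value is $\equiv0$. Everything then reduces to showing $\zzf{r}\zzf{s}\in\Fil_3\ZAfw{k}$ for odd $r,s\geq3$ with $r+s=k$.

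\emph{A relation that separates the products.} You need a linear shuffle relation which, read modulo $\Fil_3$ through this congruence, isolates each such product. The paper takes the depth $1+3$ relation in generating-series form, inserts the congruence, and sets $X=W=0$. This gives, for each $\alpha$, the relation $\sum s\binom{r}{\alpha}\zzf{r}\zzf{s}\equiv0$, summed over odd $r,s\geq3$ with $r+s=k$ and $r\geq\alpha+2$. This is a triangular system with nonzero diagonal coefficients, so downward induction on $\max(r,s)$ places every product in $\Fil_3\ZAfw{k}$.

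Some invertibility statement of this kind is the actual content of the depth-four case, and it is absent from your proposal.
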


The proof first reduces depth four to products of two odd-weight formal single
values and then eliminates these products by one generating-series form of the
depth $1+3$ linear shuffle relation. It works entirely in $\ZAf$ and hence
specializes simultaneously to finite and symmetric multiple zeta values.

The remaining part of the paper concerns period polynomials. For even
$k\geq4$, let $W_k^{\even}$ be the space of homogeneous polynomials of degree
$k-2$ satisfying
\begin{align}\label{eq:even-period-polynomials}
 p(X,Y)=p(X+Y,Y)-p(X+Y,X).
\end{align}
Every element is antisymmetric and contains only even powers of $X$ and $Y$,
and
\[
 \dim_{\QQ}W_k^{\even}=\dim_{\QQ}\mathcal S_k+1,
\]
where $\mathcal S_k$ is the space of cusp forms of weight $k$ for
$\SL_2(\ZZ)$. The line $\QQ(X^{k-2}-Y^{k-2})$ is the noncuspidal part.
Gangl, Kaneko and Zagier showed that even period polynomials give relations
among double zeta values~\cite{GKZ}, see also~\cite{BS,Tasaka} for further
period-polynomial relations. Kaneko and Zagier conjecture that the
finite multiple zeta values
\[
 \za{2a,1,2b,1},\qquad a,b\geq1,\quad 2a+2b+2=k,
\]
span the depth-four part in weight $k$ and satisfy exactly
$\dim_{\QQ}\mathcal S_k$ independent relations~\cite{KZ}.
On the symmetric side, Hara, Sakugawa and Tasaka recently expressed
every cusp form with rational Fourier coefficients as a rational
linear combination of symmetric triple Eisenstein series, whose
constant terms give modular relations among symmetric triple zeta
values~\cite{HST}.

To explain the relations among the values $\za{2a,1,2b,1}$, let $\Dk$
be the formal double zeta space of
Gangl, Kaneko and Zagier and define
\[
 \Dtk=\Dk\big/\left(
 \QQ Z_k+\QQ Z_{1,k-1}
 +\Span_{\QQ}\{P_{r,s}:r,s\text{ even},\ r+s=k\}\right).
\]
Its dimension is
\[
 \dim_{\QQ}\Dtk=\frac{k}{2}-2-\dim_{\QQ}\mathcal S_k.
\]
For odd $r,s\geq3$ with $r+s=k$, put
\begin{align}\label{eq:H-source-element}
 H_{r,s}=\frac12\Bigg(&
 (rs-r+1)Z_{r,s}+(rs+s-1)Z_{s,r}
 +(s+1)Z_{s+1,r-1}-(r+1)Z_{r+1,s-1}\notag\\
 &+\sum_{\substack{3\leq\ell\leq k-3\\\ell\ \mathrm{odd}}}
 \left(\binom{k-\ell}{s}-\binom{k-\ell}{r}\right)
 P_{\ell,k-\ell}\Bigg)\in\Dtk.
\end{align}
These elements can be seen as the counterparts in $\Dtk$ of the values
$\zeta_{\mathcal A}(r-1,1,s-1,1)$.

\begin{mainthm}\label{main:double-zeta-surjection}
Let $k\geq4$ be even.
\begin{enumerate}[(i)]
\item The elements $H_{r,s}$, where $r,s\geq3$ are odd and $r+s=k$, span
$\Dtk$.
\item Assume that \cref{conj:beta-identities} holds in weight $k$. Then 
\[
 H_{r,s}\longmapsto\zaf{r-1,1,s-1,1}
\]
defines a surjective map
$\beta_k^{\mathrm f}:\Dtk\to\Fil_4\ZAfw{k}$.
\end{enumerate}
\end{mainthm}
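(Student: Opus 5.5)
For (i) I will use the structure of the Gangl--Kaneko--Zagier space $\Dk$. It is spanned by the $Z_{r,s}$ ($r+s=k$, $r\ge1$) and the $P_{r,s}$. These are subject to the double shuffle relations:
\[
P_{r,s}=Z_{r,s}+Z_{s,r}+Z_k,
\]
and
\[
P_{r,s}=\sum_{j}\Bigl(\tbinom{j-1}{r-1}+\tbinom{j-1}{s-1}\Bigr)Z_{j,k-j}.
\]
I will use the known description of $\Dk$ as having a basis in which the $P_{\ell,k-\ell}$ with $\ell$ odd and $Z_k$ suffice modulo the even $P_{r,s}$. In fact \cite{GKZ} shows that $\Dk$ is spanned by $Z_k$ and the $P_{r,s}$, and that the only relations among the even $P$'s and the odd $P$'s come from even period polynomials. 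After quotienting by $Z_k$, $Z_{1,k-1}$ and the even $P_{r,s}$, the space $\Dtk$ is therefore spanned by the images of the $P_{\ell,k-\ell}$ with $\ell$ odd, $3\le\ell\le k-3$.

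Equivalently, $\Dtk$ is spanned by the images of the odd-odd $Z_{r,s}$ modulo the period-polynomial relations. The claimed dimension $\frac k2-2-\dim\mathcal S_k$ then matches $\#\{(r,s)\text{ odd}\ge3\}=\frac k2-2$ minus the number of cuspidal period-polynomial relations. So my plan is to show that the $H_{r,s}$ are related to the odd $Z_{r,s}$, or equivalently to the odd $P_{\ell,k-\ell}$, by a unitriangular transformation modulo the quotient. Concretely, I will use the double shuffle relations to rewrite each term appearing in $H_{r,s}$, namely $Z_{r,s}$, $Z_{s,r}$, $Z_{s+1,r-1}$ and $Z_{r+1,s-1}$, in $\Dtk$. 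The even-index terms $Z_{s+1,r-1}$ and $Z_{r+1,s-1}$ are expressed through the odd $Z$'s, modulo $Z_k$ and the even $P$'s. I then expect $H_{r,s}$ to equal a nonzero multiple of, say, $Z_{r,s}$ plus a combination of $Z_{r',s'}$ with $r'>r$, in the ordered odd basis of $Z$'s. Since the $Z_{r,s}$ with $r,s$ odd span $\Dtk$, triangularity gives the spanning statement. The main obstacle in (i) is verifying this triangularity: the binomial sums are messy. I would first try to show that the leading coefficient, of size roughly $rs$, is nonzero, and that all remaining odd contributions have strictly larger first index. This should follow from $\binom{k-\ell}{s}-\binom{k-\ell}{r}$ vanishing for $k-\ell<\min(r,s)$.

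For (ii) I will define $\beta_k^{\mathrm f}$ on the free $\QQ$-span of symbols $H_{r,s}$ by $H_{r,s}\mapsto\zaf{r-1,1,s-1,1}$. Well-definedness requires that every linear relation among the $H_{r,s}$ in $\Dtk$ maps to a relation in $\ZAf$. By (i) and the dimension count, the relation space among the $H_{r,s}$ has dimension $\dim\mathcal S_k$ and is parametrized by even cuspidal period polynomials. I will take these relations exactly in the form asserted by \cref{conj:beta-identities}, which presumably states that the corresponding combinations of $\zaf{2a,1,2b,1}$ vanish. Under that assumption the map descends to $\Dtk$.

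Surjectivity then comes from spanning the target. By \cref{main:parity}, with $k$ even and $d=4$, we have $\Fil_4\ZAfw{k}=\Fil_3\ZAfw{k}$. I will show that the $\zaf{2a,1,2b,1}$ span $\Fil_4\ZAfw{k}$, using the reductions in the proof of \cref{main:parity} together with stuffle and linear shuffle relations; these express depth $\le3$ values in even weight through these depth-four values. This is the Kaneko--Zagier spanning claim in the formal setting. If it is not already established earlier in the paper, it is the second hard step, and I would attempt it via the generating-series form of the depth $1+3$ linear shuffle relation.
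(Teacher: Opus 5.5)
Both parts of your proposal have genuine gaps. In (i) the spanning and triangularity claims are false. In (ii) the conjecture is misidentified, so the map is never actually constructed and the spanning of the target is not established.

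\textbf{Part (i).} Your plan rests on two claims, and both fail. First, the odd product symbols do not span $\Dtk$, and likewise $Z_k$ and the $P_{r,s}$ do not span $\Dk$. For $k=12$ one has $\dim\widetilde{\mathcal D}_{12}=3$, but only two distinct odd symbols, $P_{3,9}$ and $P_{5,7}$, remain. Were the claim true, (i) would follow at once from \eqref{eq:H-source-symmetry}; the real difficulty is the part of the $H_{r,s}$ that the $P$'s do not see.

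Second, the expected triangularity fails. The even--even terms $Z_{s+1,r-1}$ and $Z_{r+1,s-1}$ can only be rewritten through \eqref{eq:GKZ-seven-reduced}, whose Bernoulli coefficients involve all odd classes. So the vanishing of $\binom{k-\ell}{s}-\binom{k-\ell}{r}$ does not help. Concretely:
\begin{enumerate}[(a)]
\item Every row of $A_{12}$ in \cref{ex:weight-twelve-transfer-relation} has nonzero $Z_{3,9}$-entry, namely $-39,-90,90,93$.
\item The relations among the odd classes in $\widetilde{\mathcal D}_{12}$ are spanned by $84Z_{5,7}+75Z_{7,5}+14Z_{9,3}=0$, which comes from $p=X^2Y^2(X^2-Y^2)^3$ via \cref{prop:period-polynomial-relations}. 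This relation does not involve $Z_{3,9}$.
\item Hence $H_{5,7}$, $H_{7,5}$ and $H_{9,3}$ have a nonzero $Z_{3,9}$-component in every expansion in these classes.
\item One also checks that no row is congruent, modulo this relation, to a multiple of a single class, so no reordering rescues the argument.
\end{enumerate}
A triangular system with nonzero diagonal in $\Dbk$ would even prove \cref{conj:detA}, which is open.

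The missing idea is to work modulo the span of the $H_{r,s}$. There all $P_{r,k-r}$ vanish: odd $r$ by \eqref{eq:H-source-symmetry}, even $r$ by definition, and $r=1$ by the sum formula. Hence $Z(X,Y)=\sum_r Z_{r,k-r}X^{r-1}Y^{k-r-1}$ is anti-invariant under the dihedral group of order $12$. Moreover $2H_{r,s}=-[X^rY^s]\mathcal L(Z)$ with $\mathcal L$ as in \eqref{eq:H-source-operator}, so \cref{lem:dihedral-odd-coefficients} forces $Z=0$.

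\textbf{Part (ii).} You guessed the content of \cref{conj:beta-identities} incorrectly. Under your reading, well-definedness would simply be assumed. The conjecture is the formal Hoffman-duality identity \eqref{eq:formal-Hoffman-depth-three}. It expresses each even-weight $\zaf{a,b,c}$ through the $\zaf{n,1,m}$ and products $\zzf{u}\zzf{v}$.

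From it one must construct a realization of $\Dk$ in $\ZAfw{k}$. The paper does this as follows:
\begin{enumerate}[(a)]
\item Set $D_n=\zaf{n,1,k-n-1}-\zzf{n}\zzf{k-n}$ and send $Z_{r,k-r}\mapsto y_r$ as in \eqref{eq:direct-beta-certificate}.
\item Check the shuffle side of \eqref{eq:formal-double-shuffle} by binomial inversion and \cref{lem:restricted-sum}.
\item Check the stuffle side $y_r+y_{k-r}=\zzf{r}\zzf{k-r}$ by the harmonic identity \eqref{eq:formal-harmonic}, which is derived from the conjecture in \cref{prop:harmonic-from-hoffman}.
\item Verify that $Z_k$, $Z_{1,k-1}$ and the even $P_{r,s}$ map to $0$.
\item Use \eqref{eq:four-one-product} together with the conjecture at $(1,r-1,s)$ and $(1,s-1,r)$ to get $H_{r,s}\mapsto\zaf{r-1,1,s-1,1}$.
\end{enumerate}
Also, the description of the relations among the $H_{r,s}$ by period polynomials is only known to be complete when $\det A_k\neq0$.

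For surjectivity, strong parity $\Fil_4\ZAfw{k}=\Fil_3\ZAfw{k}$ is the right final step. However, you give no mechanism for reducing arbitrary depth-three values to the $\zaf{n,1,m}$. The depth $1+3$ series you mention is what the paper uses for the parity reduction, and it gives no evident route to this spanning. In the paper the spanning comes from the same realization:
\begin{enumerate}[(a)]
\item binomial inversion of the coordinates recovers each $D_n$;
\item \eqref{eq:harmonic-binomial-direct} recovers each $\zzf{n}\zzf{k-n}$;
\item \eqref{eq:formal-Hoffman-depth-three} reduces every depth-three value to these.
\end{enumerate}
This is the second essential use of the conjecture.
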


For classical finite multiple zeta values, the depth-three identity needed in
part~(ii) follows from Hoffman's duality together with formulas of
Kina~\cite{HoffmanFinite,Kina}. This gives an unconditional map in every even
weight. The coordinates realizing this map agree, after reversal, with
depth-three elements introduced by Kina, who proved their shuffle relation
and that they span the depth-three values of even weight~\cite{Kina}. Our new
input is the complementary stuffle relation for finite multiple zeta values,
which turns these coordinates into a realization of the reduced formal double
zeta space, see \cref{rem:kina-comparison}.

\begin{mainthm}\label{main:classical-beta}
For every even $k\geq4$, 
\[
 H_{r,s}\longmapsto\za{r-1,1,s-1,1}
\]
defines a surjective map
$\beta_k:\Dtk\to\Fil_4\ZAw{k}$. Consequently,
\[
 \Fil_4\ZAw{k}
 =\Span_{\QQ}\{\za{r-1,1,s-1,1}:r,s\geq3\text{ odd},\ r+s=k\},
\]
and these values satisfy at least $\dim_{\QQ}\mathcal S_k$ independent
linear relations.
\end{mainthm}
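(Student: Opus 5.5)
Main Theorem~\ref{main:classical-beta} follows from Main Theorem~\ref{main:double-zeta-surjection}, once its hypothesis is verified for classical finite multiple zeta values. The classical values $\za{\cdots}$ form a quotient realization of $\ZAf$ by Main Theorem~\ref{main:formal-kz}: the reduction map $\ZAf\to\ZA$, $\zaf{\mathbf k}\mapsto\za{\mathbf k}$, is a well-defined surjective algebra homomorphism, since finite multiple zeta values satisfy the stuffle and linear shuffle relations. If Conjecture~\ref{conj:beta-identities} were known in weight $k$, we would simply compose $\beta_k^{\mathrm f}$ with this reduction map. The formal identity is not available in general, so instead I would rerun the argument of part~(ii) of Main Theorem~\ref{main:double-zeta-surjection} in $\ZA$. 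The formal conjecture is used there only to produce certain depth-three identities, which for classical values I would establish directly.

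\textbf{Step 1: the depth-three identities.} I would write down the classical analogues of the identities in Conjecture~\ref{conj:beta-identities}. These express depth-three values of even weight, in particular $\za{r-1,1,s}$-type values and their relatives, in terms of the coordinates $\za{r-1,1,s-1,1}$ plus lower-depth corrections. I would prove them using Hoffman's duality $\za{\mathbf k}=-\za{\mathbf k^\vee}$ for finite multiple zeta values together with Kina's formulas. Kina's depth-three elements coincide, after reversal, with our coordinates, and he proved both their shuffle relation and that they span the depth-three part. This is the step I expect to be the main obstacle: matching Kina's normalizations, which involve reversal of indices and lower-depth terms such as $\za{k}=0$ and products of single values, with exactly the form required by the conjecture will need careful bookkeeping of signs and binomial coefficients.

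\textbf{Step 2: well-definedness on $\Dtk$.} I would define $\beta_k$ on the generators $Z_{r,s}$ and $P_{r,s}$ of $\Dk$ through the explicit coordinates used in the proof of Main Theorem~\ref{main:double-zeta-surjection}(ii). I would then check that the double shuffle relations of $\Dk$ are respected:
\begin{enumerate}[(a)]
\item the shuffle-type relation follows from Kina's shuffle relation;
\item the stuffle-type relation is the new input, which I would derive from the finite stuffle product combined with the depth-three identities of Step~1;
\item the elements $Z_k$, $Z_{1,k-1}$ and $P_{r,s}$ with $r,s$ even map to zero, using $\za{k}=0$ and the parity theorem for depth at most four (Main Theorem~\ref{main:parity}, specialized to $\ZA$).
\end{enumerate}
By the construction in Main Theorem~\ref{main:double-zeta-surjection}, $H_{r,s}$ then maps to $\za{r-1,1,s-1,1}$.

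\textbf{Step 3: surjectivity and consequences.} For surjectivity I would use the classical counterpart of part~(ii): by Main Theorem~\ref{main:parity}, $\Fil_4\ZAw{k}=\Fil_3\ZAw{k}$ in even weight, and by Kina's spanning result this space is spanned by the values $\za{r-1,1,s-1,1}$. By part~(i) the $H_{r,s}$ span $\Dtk$, so the image of $\beta_k$ is exactly the span of the $\za{r-1,1,s-1,1}$, which gives the displayed equality. Finally, there are $k/2-2$ pairs $(r,s)$ of odd integers $r,s\geq3$ with $r+s=k$, while $\dim_{\QQ}\Dtk=k/2-2-\dim_{\QQ}\mathcal S_k$. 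The kernel of the map from the free space on these symbols onto $\Dtk$ therefore has dimension $\dim_{\QQ}\mathcal S_k$, and these relations persist after applying $\beta_k$. This gives at least $\dim_{\QQ}\mathcal S_k$ independent linear relations among the values $\za{r-1,1,s-1,1}$.
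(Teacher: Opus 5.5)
Your outline follows the paper's proof. The steps are: the classical form of \eqref{eq:formal-Hoffman-depth-three} from Hoffman's duality and Kina's formulas; the harmonic (stuffle-side) identity derived from it using $\za{a}\za{b,c}=0$; the certificate of \cref{lem:direct-beta-certificate} rerun verbatim after $\pi_{\mathcal A}$; and finally strong parity together with $\dim\Fil_4\ZAw{k}\leq\dim\Dtk$. Several statements along the way need correcting.

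\textbf{Hoffman's duality.} It is the star identity $\zeta^\star_{\mathcal A}(\boldsymbol k)=-\zeta^\star_{\mathcal A}(\boldsymbol k^\vee)$. The non-star form you quote is not a theorem. For $\boldsymbol k=(1,2,3)$, $\boldsymbol k^\vee=(2,2,1,1)$, the values $\za{1,2,3}$ and $\za{2,2,1,1}$ both equal $\frac13\za{2,1}^2$, so it would force $\za{2,1}^2=0$. Star values may be replaced by non-star ones here only because, in even weight, the contraction terms of a depth-three star value have depth at most two and vanish.

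\textbf{The content of \cref{conj:beta-identities}.} The identity expresses every $\za{a,b,c}$ through the values $\za{n,1,m}$ and products of the Bernoulli classes $\pi_{\mathcal A}(\zzf{m})$. It does not involve the depth-four values. Those enter only through \eqref{eq:four-one-product}, when one identifies the image of $H_{r,s}$.

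\textbf{The boundary symbol.} $Z_{1,k-1}\mapsto0$ is a boundary computation with \cref{lem:restricted-sum} and reversal. It is not a consequence of $\za{k}=0$ or of parity.

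\textbf{Surjectivity.} Kina's spanning theorem concerns his elements, i.e.\ your coordinates, not the values $\za{r-1,1,s-1,1}$. The correct chain is
$\Fil_4\ZAw{k}=\Fil_3\ZAw{k}=\Span(\text{coordinates})\subseteq\operatorname{im}\beta_k=\Span\{\za{r-1,1,s-1,1}\}\subseteq\Fil_4\ZAw{k}$,
with $\operatorname{im}\beta_k=\Span\{\za{r-1,1,s-1,1}\}$ coming from part~(i). The paper does not cite Kina at this point. Instead it reproves the spanning inside \cref{lem:direct-beta-certificate}, by binomial inversion of the coordinates combined with the harmonic identity and \eqref{eq:formal-Hoffman-depth-three}.
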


We expect the two maps to identify the reduced formal double zeta space with
the corresponding depth part.

\begin{mainconj}\label{main:double-zeta}
For every even $k\geq4$, the rule defining $\beta_k^{\mathrm f}$ is well
defined, and the maps $\beta_k^{\mathrm f}$ and $\beta_k$ are isomorphisms.
\end{mainconj}

The period-polynomial theorem of~\cite{GKZ} gives all relations among the
classes $Z_{r,k-r}$ with $r,k-r\geq3$ odd in $\Dtk$. We rewrite the $H_{r,s}$ in
these classes by an explicit matrix with binomial and Bernoulli number entries.
Cramer's rule then assigns to every even period polynomial an explicit relation
among the $H_{r,s}$, and applying $\beta_k$ gives an unconditional relation
among finite multiple zeta values. Whenever this matrix is invertible, which
we verified by exact computation for every even $k\leq100$
(\cref{conj:detA}), these relations
are nontrivial for all cuspidal period polynomials and exhaust the relations
among the $H_{r,s}$. For example, for the period polynomial
$X^2Y^2(X^2-Y^2)^3$ of weight $12$, our construction recovers the relation
\begin{align}\label{eq:intro-weight-twelve-relation}
 16\za{2,1,8,1}+9\za{4,1,6,1}
 +18\za{6,1,4,1}-2\za{8,1,2,1}=0,
\end{align}
which was observed by Kaneko and Zagier~\cite{KZ}.
If $\beta_k$ is bijective, these are all relations among the values
$\za{2a,1,2b,1}$, and the conjecture of Kaneko and Zagier follows.

The organization of this paper is as follows. In \cref{sec:prelim} we recall finite,
symmetric and formal multiple zeta values. In \cref{sec:formal-finite} we
define $\ZAf$ and prove \cref{main:formal-kz}. The parity theorem is proved in
\cref{sec:parity}. In \cref{sec:double-zeta} we study $\Dtk$, construct the
maps $\beta_k^{\mathrm f}$ and $\beta_k$, and derive the period-polynomial
relations.

\Cref{main:formal-kz,main:parity} are contained in the master's thesis
\cite{Risan} of the second author, written under the supervision of the first
author. 

\subsection*{Acknowledgements}
We thank Masanobu Kaneko for fruitful discussions on this topic. This
project was partially supported by JSPS KAKENHI Grants 23K03030 and
26K22254.

\section{Finite, symmetric and formal multiple zeta values}\label{sec:prelim}

Throughout the paper an \emph{index} is a tuple $\boldsymbol{k}=(k_1,\ldots,k_r)$ of positive integers. Its weight and depth are
\[
 \wt(\boldsymbol{k})=k_1+\cdots+k_r,
 \qquad \dep(\boldsymbol{k})=r.
\]
We use decreasing summation indices as in \eqref{eq:intro-fmzv}. Note that Kaneko and Zagier use increasing summation indices in~\cite{KZ}, and the two conventions are related by reversal.

\subsection{The two products}

Let $\h=\QQ\langle x,y\rangle$ and
\[
 \h^0=\QQ+x\h y,
 \qquad
 \h^1=\QQ+\h y.
\]
We put $z_k=x^{k-1}y$ for $k\geq1$, i.e. the words in $\h^1$ are $\emptyword$ and $z_{k_1}\cdots z_{k_r}$. The \emph{stuffle product} is the bilinear product on $\h^1$ determined by
\begin{align}\label{eq:stuffle}
 \emptyword*w=w*\emptyword=w,
 \qquad
 z_aw*z_bv=z_a(w*z_bv)+z_b(z_aw*v)+z_{a+b}(w*v).
\end{align}
The \emph{shuffle product} on $\h$ is determined by
\begin{align}\label{eq:shuffle}
 \emptyword\shuffle w=w\shuffle\emptyword=w,
 \qquad
 au\shuffle bv=a(u\shuffle bv)+b(au\shuffle v)
\end{align}
for $a,b\in\{x,y\}$. We denote the corresponding commutative algebras by $\h^1_*$ and $\h^1_\shuffle$.

Let
\[
 R(z_{k_1}\cdots z_{k_r})=z_{k_r}\cdots z_{k_1}
\]
be reversal. The deconcatenation coproduct
\[
 \Delta(w)=\sum_{uv=w}u\otimes v
\]
makes $\h^1_*$ a Hopf algebra~\cite{HoffmanIhara}. If $\Sigma$ replaces any collection of separators in a reversed word by contractions $z_a\diamond z_b=z_{a+b}$ and $\epsilon_{\dep}(w)=(-1)^{\dep(w)}w$, its antipode is
\begin{align}\label{eq:antipode}
 S_*=\Sigma\circ\epsilon_{\dep}\circ R.
\end{align}
We will use the antipode identity
\begin{align}\label{eq:antipode-identity}
 \sum_{uv=w}S_*(u)*v=0
\end{align}
for every nonempty word $w$.

\subsection{Finite and symmetric values}

The map $Z_{\mathcal A}:\h^1\to\A$ defined by \eqref{eq:intro-fmzv} is an algebra homomorphism for the stuffle product. It also satisfies the linear shuffle relation of Kaneko and Zagier: for all words $w,v\in\h^1$,
\begin{align}\label{eq:finite-linear-shuffle}
 Z_{\mathcal A}(w\shuffle v)=(-1)^{\wt(w)}Z_{\mathcal A}(R(w)v).
\end{align}
In particular,
\begin{align}\label{eq:finite-reversal}
 \za{k_r,\ldots,k_1}=(-1)^{k_1+\cdots+k_r}\za{k_1,\ldots,k_r}.
\end{align}
We write $\ZA\subset\A$ for the $\QQ$-algebra spanned by all finite multiple zeta values.

To recall the second finite version, let $\zeta^{*,T}$ and $\zeta^{\shuffle,T}$ denote the stuffle and shuffle regularizations of multiple zeta values. For $\bullet\in\{*,\shuffle\}$ set
\begin{align}\label{eq:symmetric-classical}
 \zeta_{\mathcal S}^{\bullet}(k_1,\ldots,k_r)
 =\sum_{j=0}^r(-1)^{k_1+\cdots+k_j}
 \zeta^{\bullet,T}(k_j,\ldots,k_1)
 \zeta^{\bullet,T}(k_{j+1},\ldots,k_r),
\end{align}
where the value attached to the empty index is~$1$. This expression is independent of $T$, and the two choices of regularization are congruent modulo $\zeta(2)\mathcal Z$. Their common class is $\zs{k_1,\ldots,k_r}$. Symmetric multiple zeta values satisfy the stuffle product and \eqref{eq:finite-linear-shuffle}, see~\cite{KZ,Kaneko}.

\subsection{Formal multiple zeta values}

The stuffle regularization map $\operatorname{reg}_*: \h^1_*\simeq\h^0_*[z_1]\to\h^0_*$ is the identity on $\h^0$ and sends $z_1$ to zero. The algebra of \emph{formal multiple zeta values} is
\begin{align}\label{eq:formal-mzv}
 \Zf=\h^0_*\big/\mathrm{EDS}_*,
\end{align}
where $\mathrm{EDS}_*$ is the ideal generated by
\[
 \operatorname{reg}_*(w*v-w\shuffle v),
 \qquad w\in\h^0,\quad v\in\h^1.
\]
For an admissible index we denote the class of $z_{k_1}\cdots z_{k_r}$ by $\zf{k_1,\ldots,k_r}$. There is a natural surjection $\Zf\to\mathcal Z$, and the extended double shuffle conjecture states that it is an isomorphism.

The two polynomial regularizations are available inside the formal algebra itself. Namely, there are unique homomorphisms
\[
 Z^{\mathrm f,*}:\h^1_*\longrightarrow\Zf[T],
 \qquad
 Z^{\mathrm f,\shuffle}:\h^1_\shuffle\longrightarrow\Zf[T]
\]
which extend the quotient map on $\h^0$ and send $z_1$ to~$T$. Put
\begin{align}\label{eq:formal-A}
 A^{\mathrm f}(u)=\exp\left(\sum_{n\geq2}\frac{(-1)^n}{n}\zf{n}u^n\right),
\end{align}
and define the $\Zf$-linear map $\rho^{\mathrm f}:\Zf[T]\to\Zf[T]$ coefficientwise by
\begin{align}\label{eq:formal-rho}
 \rho^{\mathrm f}(e^{Tu})=A^{\mathrm f}(u)e^{Tu}.
\end{align}

\begin{prop}\label{prop:formal-regularization}
One has
\begin{align}\label{eq:formal-reg-theorem}
 Z^{\mathrm f,\shuffle}=\rho^{\mathrm f}\circ Z^{\mathrm f,*}.
\end{align}
Moreover, for every $m\geq1$,
\begin{align}\label{eq:formal-euler}
 \zf{2m}=-\frac{B_{2m}}{2(2m)!}\bigl(-24\zf{2}\bigr)^m.
\end{align}
In particular, $\zf{2m}\in\zf{2}\Zf$.
\end{prop}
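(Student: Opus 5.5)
The proof has two parts: the comparison \eqref{eq:formal-reg-theorem}, which I would get by running the Ihara--Kaneko--Zagier argument inside $\Zf$, and the Euler formula \eqref{eq:formal-euler}, which I would get from an Euler-type recursion forced by $\mathrm{EDS}_*$.

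\emph{Reduction to powers of $z_1$.} Both $Z^{\mathrm f,*}$ and $Z^{\mathrm f,\shuffle}$ restrict to the quotient map on $\h^0$, and $\rho^{\mathrm f}$ is $\Zf$-linear. Every $w\in\h^1$ has a unique expansion $w=\sum_j w_j\shuffle y^{j}$ with $w_j\in\h^0$, and also a stuffle expansion $w=\sum_j w'_j*z_1^{*j}$ with $w'_j\in\h^0$. So \eqref{eq:formal-reg-theorem} splits into two claims.

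The first claim is the case $w=y^n$. Here I compare generating series. In $\h^1_\shuffle$ one has $\sum_n y^nu^n=\exp_\shuffle(yu)$, so $Z^{\mathrm f,\shuffle}$ sends it to $e^{Tu}$. In $\h^1_*$ one has the classical identity
\[
 \sum_{n\ge0}y^nu^n=\exp_*\Bigl(\sum_{n\ge1}\frac{(-1)^{n-1}}{n}z_nu^n\Bigr).
\]
Hence $Z^{\mathrm f,*}$ sends it to $e^{Tu}\exp\bigl(\sum_{n\ge2}\frac{(-1)^{n-1}}{n}\zf{n}u^n\bigr)=A^{\mathrm f}(u)^{-1}e^{Tu}$. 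Applying $\rho^{\mathrm f}$ multiplies by $A^{\mathrm f}(u)$, giving $e^{Tu}$. So \eqref{eq:formal-reg-theorem} holds on all $y^n$.

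The second claim is that, for $w_0\in\h^0$ and $n\ge0$,
\[
 Z^{\mathrm f,*}(w_0\shuffle y^n)=\zf{w_0}\,Z^{\mathrm f,*}(y^n)
\]
as polynomials in $T$. Granting this, $\Zf$-linearity of $\rho^{\mathrm f}$ together with the first claim gives $\rho^{\mathrm f}Z^{\mathrm f,*}(w_0\shuffle y^n)=\zf{w_0}T^n/n!\cdot n!=Z^{\mathrm f,\shuffle}(w_0\shuffle y^n)$. Summing over the shuffle expansion then proves \eqref{eq:formal-reg-theorem}.

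\emph{Main obstacle.} The identity in the second claim is exactly what $\mathrm{EDS}_*$ supplies, but only at the level of constant terms in $T$: it gives $\operatorname{reg}_*(w_0*v-w_0\shuffle v)=0$. To recover the higher $T$-coefficients I would use the derivation $\partial$ on $\h^1_*$ that removes one stuffle factor $z_1$, i.e.\ $\partial(u*z_1^{*j})=j\,u*z_1^{*(j-1)}$, so that $\frac{d}{dT}Z^{\mathrm f,*}=Z^{\mathrm f,*}\circ\partial$. I would then check, via the Hopf structure and the antipode identity \eqref{eq:antipode-identity}, that
\[
 \partial(w_0\shuffle y^n)\equiv w_0\shuffle\partial(y^n)
\]
modulo elements whose $\operatorname{reg}_*$ lies in $\mathrm{EDS}_*$. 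This commutation is the step I expect to be hardest. Once it holds, induction on $n$ transfers the constant-term identity to all coefficients.

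\emph{Euler formula.} Fix $m\ge1$. For each pair $a,b\ge2$ with $a+b=2m$, write $\zf{a}\zf{b}$ once as a stuffle product and once as a shuffle product; the difference lies in $\mathrm{EDS}_*$. This gives
\[
 \zf{a,b}+\zf{b,a}+\zf{2m}=\sum_{j}\Bigl(\tbinom{j-1}{a-1}+\tbinom{j-1}{b-1}\Bigr)\zf{j,2m-j}.
\]
Include also the regularized pair $(a,b)=(2m-1,1)$, i.e.\ the relation $\operatorname{reg}_*(z_{2m-1}*z_1-z_{2m-1}\shuffle z_1)$. Summing these relations over $a$ even, exactly as in Gangl--Kaneko--Zagier, the double zeta values cancel and leave Euler's recursion
\[
 \sum_{j=1}^{m-1}\zf{2j}\zf{2m-2j}=\Bigl(m+\tfrac12\Bigr)\zf{2m}.
\]
Since $m+\frac12\neq0$, this expresses $\zf{2m}$ as a polynomial in the lower even values. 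The right-hand side of \eqref{eq:formal-euler} satisfies the same recursion, by the classical Bernoulli convolution identity, and the two sides agree at $m=1$. Induction on $m$ then gives \eqref{eq:formal-euler}, and with it $\zf{2m}\in\zf{2}\Zf$.
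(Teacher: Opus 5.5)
Your reduction to powers of $y$ and the generating-series computation for $y^n$ are correct. Completed, this route would give a self-contained algebraic derivation from the generators of $\mathrm{EDS}_*$, which the paper does not spell out; it defers to Ihara--Kaneko--Zagier and BIM.

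\textbf{The key step is left open, and the version you aim for would not suffice.} If $\partial(w_0\shuffle y^n)-w_0\shuffle y^{n-1}$ were only some $e$ with $\operatorname{reg}_*(e)\in\mathrm{EDS}_*$, then $Z^{\mathrm f,*}(e)$ would have zero constant term but uncontrolled higher $T$-coefficients, so the induction would not close. What you need is an exact identity, and it holds. The derivation $\partial=d/dz_1$ is simply deletion of a leading $y$: $\partial(yw)=w$, $\partial(xw)=0$, $\partial(\emptyword)=0$.
\begin{itemize}
\item Deletion of a leading $y$ is a derivation for $*$. Check it on \eqref{eq:stuffle}; the contraction $z_{a+b}$ never starts with $y$.
\item It kills $\h^0$ and sends $z_1$ to $1$, so it equals your $\partial$.
\item By \eqref{eq:shuffle} it is also a derivation for $\shuffle$.
\end{itemize}
Hence $\partial(w_0\shuffle y^n)=w_0\shuffle y^{n-1}$ exactly, and no antipode argument is needed. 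Put $F_n=Z^{\mathrm f,*}(w_0\shuffle y^n)-\zf{w_0}Z^{\mathrm f,*}(y^n)$. Then $F_0=0$ and $dF_n/dT=F_{n-1}$. Also $F_n(0)=0$, because it is the class of the generator $\operatorname{reg}_*(w_0\shuffle y^n-w_0*y^n)$. Therefore $F_n=0$ for all $n$. Separately, $Z^{\mathrm f,\shuffle}(w_0\shuffle y^n)=\zf{w_0}T^n/n!$; the extra factor $n!$ in your display is a slip.

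\textbf{The Euler recursion does not follow from the combination you describe.} The relations for even $a$ together with the regularized pair do not eliminate the double zeta values. Already for $m=3$ they give only
\[
\zf{6}=2\zf{3,3}+3\zf{4,2}+8\zf{5,1},
\qquad
\zf{6}=\zf{2,4}+\zf{3,3}+\zf{4,2}+\zf{5,1}.
\]
No linear combination of these yields $\zf{2,4}+\zf{4,2}=\frac34\zf{6}$, which is what $2\zf{2}\zf{4}=\frac72\zf{6}$ amounts to. The odd pair $(3,3)$, which gives $\zf{6}=6\zf{4,2}+12\zf{5,1}$, is indispensable here.

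In general, Gangl--Kaneko--Zagier use the relations for \emph{all} pairs $(a,b)$ weighted by $(-1)^{b-1}$, i.e.\ their generating function at $X=1$, $Y=-1$. This gives
\[
\sum_{a\ \mathrm{odd}\geq3}\zf{a,2m-a}-\sum_{a\ \mathrm{even}}\zf{a,2m-a}=-\frac12\zf{2m}.
\]
Combined with the sum formula, this yields $\sum_{a\ \mathrm{even}}\zf{a,2m-a}=\frac34\zf{2m}$. Only then does summing the stuffle expansions over even $a$ give $(m+\frac12)\zf{2m}$.

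With these two repairs, your argument, including the Bernoulli induction, is complete.
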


\begin{proof}
The proof of the regularization theorem in~\cite[Theorem~2]{IKZ} is an identity in the two polynomial algebras $\h^0_*[z_1]$ and $\h^0_\shuffle[z_1]$. More precisely, after taking the coefficient of $u^n$ in \eqref{eq:formal-rho}, the difference
\[
 Z^{\mathrm f,\shuffle}(w)-\rho^{\mathrm f}
 \bigl(Z^{\mathrm f,*}(w)\bigr)
\]
is the image of a finite $\QQ$-linear combination of the regularized elements
$\operatorname{reg}_*(a*b-a\shuffle b)$ which generate $\mathrm{EDS}_*$. Therefore it is zero in the quotient \eqref{eq:formal-mzv}. This proves \eqref{eq:formal-reg-theorem} without using a numerical realization. Formula \eqref{eq:formal-euler} is likewise a consequence of the extended double shuffle relations, and the formal calculation is given in~\cite[Section~2.2]{BIM}. The last assertion is immediate.
\end{proof}

\section{Formal finite and formal symmetric multiple zeta values}\label{sec:formal-finite}

We now introduce the universal algebra which satisfies the two families of relations common to finite and symmetric multiple zeta values.
This algebra is already considered by Kaneko and Zagier~\cite{KZ},
whose conjecture states that its defining relations give all relations
among finite multiple zeta values. It is studied in detail in the
master's thesis~\cite{Risan}, and a closely related formulation
appears in~\cite[Section~7]{Anzawa}.

\begin{dfn}\label{def:formal-finite}
Let $\mathfrak L\subset\h^1_*$ be the ideal generated by the elements
\begin{align}\label{eq:formal-linear-shuffle}
 w\shuffle v-(-1)^{\wt(w)}R(w)v
\end{align}
for words $w,v\in\h^1$.
The algebra of \emph{formal finite multiple zeta values} is
\[
 \ZAf=\h^1_*/\mathfrak L.
\]
We write $\zaf{k_1,\ldots,k_r}$ for the class of $z_{k_1}\cdots z_{k_r}$ and set $\zaf{\varnothing}=1$.
\end{dfn}

The algebra $\ZAf$ is graded by weight, and we denote its weight-$k$ part by $\ZAfw{k}$. By construction, the stuffle product and the linear shuffle relation hold in $\ZAf$. Specializing \eqref{eq:formal-linear-shuffle} at the empty word gives the reversal relation
\begin{align}\label{eq:formal-reversal}
 \zaf{k_r,\ldots,k_1}=(-1)^{k_1+\cdots+k_r}\zaf{k_1,\ldots,k_r}.
\end{align}

\begin{prop}\label{prop:realizations}
The classical finite and symmetric multiple zeta values define surjective
algebra homomorphisms
\begin{align*}
 \pi_{\mathcal A}:\ZAf&\longrightarrow\ZA,
 &\pi_{\mathcal S}:\ZAf&\longrightarrow
      \mathcal Z/\zeta(2)\mathcal Z,\\
 \zaf{\boldsymbol{k}}&\longmapsto\za{\boldsymbol{k}},
 &\zaf{\boldsymbol{k}}&\longmapsto\zs{\boldsymbol{k}}.
\end{align*}
\end{prop}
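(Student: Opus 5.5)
The plan is to use the universal property of the quotient $\ZAf=\h^1_*/\mathfrak L$. Consider the $\QQ$-linear maps $Z_{\mathcal A}:\h^1\to\A$ and $Z_{\mathcal S}:\h^1\to\mathcal Z/\zeta(2)\mathcal Z$, sending $z_{k_1}\cdots z_{k_r}$ to $\za{k_1,\ldots,k_r}$ and $\zs{k_1,\ldots,k_r}$, with $\emptyword\mapsto1$. For each of these maps we check two things: that it is an algebra homomorphism $\h^1_*\to$ (target), and that it annihilates every generator $w\shuffle v-(-1)^{\wt(w)}R(w)v$ of $\mathfrak L$. A ring homomorphism that kills the generators of an ideal kills the whole ideal, so each map factors through $\ZAf$. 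This produces $\pi_{\mathcal A}$ and $\pi_{\mathcal S}$, and they have the stated values on the classes $\zaf{\boldsymbol k}$.

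For the finite side, both facts are recalled in \cref{sec:prelim}. The map $Z_{\mathcal A}$ is a stuffle homomorphism: the truncated sums $\sum_{p>m_1>\cdots>m_r>0}$ multiply by the stuffle rule prime by prime, and this passes to $\A$. It also satisfies \eqref{eq:finite-linear-shuffle}, which is exactly the vanishing on the generators of $\mathfrak L$. Surjectivity holds because $\ZA$ is by definition the $\QQ$-span (and algebra) generated by the $\za{\boldsymbol k}$, and every one of these is hit.

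For the symmetric side, I would first fix the regularization $\bullet=*$ in \eqref{eq:symmetric-classical}. The stuffle product and the linear shuffle relation for $\zs{\cdot}$ are then quoted from \cite{KZ,Kaneko}, as already stated in \cref{sec:prelim}. The main point needing care is that these hold modulo $\zeta(2)\mathcal Z$, which is precisely where the map lands. We also need that the class is independent of $T$ and of the choice of $\bullet$, which is likewise recalled there.

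Surjectivity onto $\mathcal Z/\zeta(2)\mathcal Z$ is the only genuinely nontrivial step, and I expect it to be the main obstacle. It is Yasuda's theorem \cite{Yasuda}: symmetric multiple zeta values span $\mathcal Z$ modulo $\zeta(2)$. I would simply cite it. Alternatively, I would derive it as the image of the surjection of \cref{main:formal-kz} under the natural map $\Zbarf\to\mathcal Z/\zeta(2)\mathcal Z$, which is surjective since $\Zf\to\mathcal Z$ is. To do so, I would check that $\zsf{\boldsymbol k}$ maps to $\zs{\boldsymbol k}$; this holds because $Z^{\mathrm f,*}$ specializes to $\zeta^{*,T}$.
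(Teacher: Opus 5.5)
Your proposal is correct and matches the paper's proof: both realizations satisfy the stuffle product and the linear shuffle relation \eqref{eq:finite-linear-shuffle}, so they factor through $\ZAf$. Surjectivity of $\pi_{\mathcal A}$ holds by the definition of $\ZA$, and surjectivity of $\pi_{\mathcal S}$ is Yasuda's theorem. Your alternative route through \cref{main:formal-kz} is valid, but it is not independent, because the paper proves that surjectivity by running Yasuda's argument in $\Zf$.
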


\begin{proof}
The stuffle product and \eqref{eq:formal-linear-shuffle} hold for both realizations. The first map is surjective by the definition of $\ZA$. Surjectivity of the second is Yasuda's theorem~\cite{Yasuda}.
\end{proof}

Consequently, every identity in formal finite multiple zeta values proved
below also holds for classical finite
and classical symmetric multiple zeta values.

We will use the following low-depth structure repeatedly in the parity argument.

\begin{prop}\label{prop:low-depth}
For positive integers $a,b$ one has
\begin{enumerate}[(i)]
 \item $\zaf{a}=0$,
 \item if $a+b$ is even, then $\zaf{a,b}=0$,
 \item on setting
 \begin{align}\label{eq:true-single}
  \zzf{1}=0,\qquad \zzf{k}=\frac1k\zaf{k-1,1}\qquad(k\geq2),
 \end{align}
 one has
 \begin{align}\label{eq:formal-double-evaluation}
  \zaf{a,b}=(-1)^a\binom{a+b}{a}\zzf{a+b}.
 \end{align}
\end{enumerate}
\end{prop}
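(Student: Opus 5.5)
The plan is to reduce everything to linear algebra among the depth-two values of a fixed weight $k=a+b$. I first prove (ii), then deduce (i) from (ii), and finally obtain (iii) by solving the linear system explicitly.

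\emph{Depth-two relations.} Take $w=z_a=x^{a-1}y$ and $v=z_b$ in \eqref{eq:formal-linear-shuffle}. The standard expansion
\[
 z_a\shuffle z_b=\sum_{j=1}^{k-1}\left(\binom{j-1}{a-1}+\binom{j-1}{b-1}\right)z_jz_{k-j}
\]
gives, in $\ZAf$,
\begin{align}\label{eq:plan-depth2-shuffle}
 \sum_{j=1}^{k-1}\left(\binom{j-1}{a-1}+\binom{j-1}{b-1}\right)\zaf{j,k-j}=(-1)^a\zaf{a,b}.
\end{align}
Together with the reversal relation \eqref{eq:formal-reversal}, which reads $\zaf{b,a}=(-1)^k\zaf{a,b}$, this is a finite homogeneous linear system in the $k-1$ unknowns $x_j=\zaf{j,k-j}$. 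For later use I also record the stuffle relation
\[
 \zaf{a}\zaf{b}=\zaf{a,b}+\zaf{b,a}+\zaf{a+b}.
\]

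\emph{Proof of (ii) and (i).} For $k$ even I will show that the system \eqref{eq:plan-depth2-shuffle} plus reversal has only the trivial solution. To do this, I pass to generating functions. Put $F(X,Y)=\sum_j x_jX^{j-1}Y^{k-j-1}$ and sum \eqref{eq:plan-depth2-shuffle} against $X^{a-1}Y^{b-1}$. The binomial sums then become the substitutions $F(X+Y,Y)$ and $F(X+Y,X)$, so the system becomes a functional equation of the shape
\[
 F(X+Y,Y)+F(X+Y,X)=\pm F(\pm X,Y),
\]
with signs fixed by $(-1)^a$, and reversal becomes $F(Y,X)=F(X,Y)$ up to sign. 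I expect that for even $k$, combining these gives an order-$6$ symmetry forcing $F=0$, in the spirit of the Gangl--Kaneko--Zagier period relations; this is the step to check carefully. Given (ii), part (i) follows. For odd $a$, reversal in depth one already gives $\zaf{a}=-\zaf{a}$, so $\zaf{a}=0$. For even $a$, reversal in weight $1$ gives $\zaf{1}=0$, so the stuffle relation with $(1,a-1)$ together with reversal yields $\zaf{a}=-2\zaf{a-1,1}$, and this vanishes by (ii).

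\emph{Proof of (iii).} For $k$ even both sides of \eqref{eq:formal-double-evaluation} vanish: the left side by (ii), and the right side because $\zzf{k}$ is a multiple of $\zaf{k-1,1}=0$. So let $k$ be odd. The candidate $x_j=(-1)^j\binom{k}{j}c$ is normalized by $x_{k-1}=-kc$, so matching $\zzf{k}=\frac1k\zaf{k-1,1}$ forces $c=-\zzf{k}$, and indeed $x_{k-1}=\zaf{k-1,1}$. I then verify two things. First, the candidate satisfies \eqref{eq:plan-depth2-shuffle}; in generating-function form this is the identity coming from $(X+Y)^k$, a routine binomial check. Second, the solution space of the system is at most one-dimensional. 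I expect this dimension bound to be the main obstacle. I would first try to prove it via the same functional equation: subtracting a multiple of the candidate reduces to a solution with $x_{k-1}=0$, and I would show that such a solution vanishes. The fallback is an explicit triangular elimination using the relations with $a=1$, which express $x_j$ recursively in terms of $x_{k-1}$.
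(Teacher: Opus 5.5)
There is a genuine gap, and it is the same in both halves of your plan: relations that live entirely in depth two are too weak. Your reduction of (i) to (ii) (reversal for odd $a$, then stuffle with $(1,a-1)$ for even $a$) is fine. The problem is that (ii) and the uniqueness needed for (iii) cannot be obtained in depth two.

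For (ii), the depth-two linear shuffle relations together with reversal do not force even-weight double values to vanish. Already for $k=4$, with $x_j=\zaf{j,4-j}$, the relations for $(a,b)=(1,3),(3,1),(2,2)$ are $2x_1+x_2+2x_3=0$, $x_1+x_2+3x_3=0$ and $x_2+4x_3=0$. With $x_1=x_3$ they all collapse to $x_2=-4x_3$. Equivalently, $F=X^2-4XY+Y^2$ is a nonzero symmetric solution of $F(X+Y,Y)+F(X+Y,X)=-F(-X,Y)$, so no order-$6$ argument can succeed. This is not an accident. By \cref{lem:cut-shuffle}, $\zeta_{\mathcal S}^{\shuffle}$ taken before reduction modulo $\zeta(2)$ satisfies the linear shuffle relation exactly, yet $\zeta_{\mathcal S}^{\shuffle}(1,3)=-\zeta(4)\neq0$. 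What rules such solutions out in $\ZAf$ is the stuffle product, and your proof of (ii) never uses it; hence (i) for even $a$ is also left unproved. The paper goes the other way round. It first proves $\zaf{k}=0$ by induction on $k$: the linear shuffle relation for $z_1$ and $z_{k-1}$ is combined with $\zaf{k-i,i}+\zaf{i,k-i}=-\zaf{k}$ (stuffle plus the induction hypothesis), which gives $-\frac{k+1}{2}\zaf{k}=0$. Part (ii) then follows at once from stuffle and reversal.

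For (iii), your claim that the solution space is at most one-dimensional is false. In odd weight, once depth-one values vanish, the depth-two stuffle relation is just reversal, and the depth-two linear shuffle relations leave too much freedom. For $k=7$, after substituting $x_{7-j}=-x_j$, all of them reduce to the single equation $x_3+5x_2+10x_1=0$. So $(x_1,\dots,x_6)=(0,1,-5,5,-1,0)$ is a nonzero solution with $x_{k-1}=0$. Your fallback is vacuous as well: for odd $k$ the relations with $a=1$ or $b=1$ read $\sum_jx_j+x_1+x_{k-1}=0$ and $\sum_jx_j=0$, and both already follow from reversal.

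The missing ingredient is depth three. The paper derives the recurrence $(a+1)\zaf{a+1,b}+(b+1)\zaf{a,b+1}=0$ from four inputs:
\begin{enumerate}[(i)]
\item the depth $1+2$ linear shuffle relation for $z_1$ and $z_az_b$;
\item the vanishing stuffle products $\zaf{u}\zaf{v,w}$;
\item the formal sum formula (Hoffman's symmetric-sum identity plus depth-one vanishing);
\item the vanishing of the cyclic sum $\zaf{b,a,1}+\zaf{1,b,a}+\zaf{a,1,b}$.
\end{enumerate}
It is this recurrence that pins all double values down to the single parameter $\zzf{k}$. Your check that $x_j=(-1)^j\binom kj c$ solves the depth-two system is correct, but it only gives existence, not uniqueness.
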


\begin{proof}
We begin with depth one and argue by induction on the weight. The case $k=1$ follows from reversal. For $k=2$, the linear shuffle and stuffle relations for $z_1$ and $z_1$ give, respectively,
\[
 3\zaf{1,1}=0,
 \qquad
 0=\zaf{1}^2=2\zaf{1,1}+\zaf{2},
\]
and hence $\zaf{2}=0$. Let now $k\geq3$. The linear shuffle relation with $w=z_1$ and $v=z_{k-1}$ gives
\begin{align}\label{eq:depth-one-shuffle}
 0=2\zaf{k-1,1}+\sum_{i=2}^{k-2}\zaf{k-i,i}+2\zaf{1,k-1}.
\end{align}
By the induction hypothesis and stuffle,
\[
 \zaf{k-i,i}+\zaf{i,k-i}=-\zaf{k}.
\]
Pairing the terms in \eqref{eq:depth-one-shuffle}, including the middle term separately when $k$ is even, shows that its right-hand side is $-(k+1)\zaf{k}/2$. Thus $\zaf{k}=0$.

We will also use the formal sum formula
\begin{align}\label{eq:formal-sum}
 \sum_{\substack{k_1+\cdots+k_d=k\\k_i\geq1}}
 \zaf{k_1,\ldots,k_d}=0.
\end{align}
Indeed, Hoffman's symmetric-sum identity writes the left-hand side as a polynomial, for the stuffle product, in depth-one values, and these have just been shown to vanish~\cite{Hoffman}. Under the finite realization, \eqref{eq:formal-sum} recovers a special case of the sum formula of Saito--Wakabayashi~\cite{SW}.

If $a+b$ is even, depth-one vanishing, stuffle and reversal give
\[
 0=\zaf{a,b}+\zaf{b,a}=2\zaf{a,b},
\]
which proves part~(ii).

For part~(iii), it is enough to establish the recurrence
\begin{align}\label{eq:double-recurrence}
 (a+1)\zaf{a+1,b}+(b+1)\zaf{a,b+1}=0.
\end{align}
It is immediate from part~(ii) when $a+b+1$ is even. Suppose that this weight is odd. The linear shuffle relation for $z_1$ and $z_az_b$ is
\begin{align}\label{eq:double-linear-shuffle}
0={}&\zaf{1,a,b}
+\sum_{\substack{u+v=a+1\\u,v\geq1}}\zaf{u,v,b}
+\sum_{\substack{u+v=b+1\\u,v\geq1}}\zaf{a,u,v}
+\zaf{a,b,1}.
\end{align}
In addition, expand the following three zero products:
\[
 \zaf{1}\zaf{a,b},\qquad
 \sum_{\substack{u+v=a+1\\u,v\geq1}}\zaf{u}\zaf{v,b},\qquad
 \sum_{\substack{u+v=b+1\\u,v\geq1}}\zaf{u}\zaf{a,v}.
\]
Their stuffle expansions sum to
\begin{align*}
 0={}&\zaf{1,a,b}+\zaf{a,1,b}+\zaf{a,b,1}
 +\zaf{a+1,b}+\zaf{a,b+1}\\
 &+\sum_{u+v=a+1}\bigl(\zaf{u,v,b}+\zaf{v,u,b}+\zaf{v,b,u}\bigr)
 +a\,\zaf{a+1,b}+\sum_{u+v=a+1}\zaf{v,u+b}\\
 &+\sum_{u+v=b+1}\bigl(\zaf{u,a,v}+\zaf{a,u,v}+\zaf{a,v,u}\bigr)
 +\sum_{u+v=b+1}\zaf{u+a,v}+b\,\zaf{a,b+1},
\end{align*}
with all summation variables positive. Subtract twice
\eqref{eq:double-linear-shuffle}. In the difference, the sums
$\sum\zaf{u,v,b}$ and $\sum\zaf{a,u,v}$ cancel after relabeling, the
depth-two contractions combine into
$(a+1)\zaf{a+1,b}+(b+1)\zaf{a,b+1}$ together with the complete sum
$\sum_{u+v=a+b+1}\zaf{u,v}$, which vanishes by \eqref{eq:formal-sum},
and the leftover sums $\sum\zaf{v,b,u}$ and $\sum\zaf{u,a,v}$ cancel in
pairs by reversal in odd weight. This leaves
\begin{align}\label{eq:recurrence-remainder}
 0={}&(a+1)\zaf{a+1,b}+(b+1)\zaf{a,b+1}\\
 &+\zaf{a,1,b}-\zaf{1,a,b}-\zaf{a,b,1}.\notag
\end{align}
The last line vanishes. To see this, set
\[
 T=\zaf{b,a,1}+\zaf{1,b,a}+\zaf{a,1,b}.
\]
Expanding stuffle products with one depth-one factor, we get
\[
 0=2T+\bigl(\zaf{1,a,b}+\zaf{a,b,1}+\zaf{b,1,a}\bigr)
   =\bigl(2-(-1)^{a+b}\bigr)T,
\]
where the last equality is reversal. Hence $T=0$, and in odd total weight this is exactly the last line of \eqref{eq:recurrence-remainder}. This proves \eqref{eq:double-recurrence}. Iterating the recurrence from $\zaf{k-1,1}=k\zzf{k}$ gives \eqref{eq:formal-double-evaluation}.
\end{proof}

The element $\zzf{k}$ can be seen as the formal finite replacement for a single zeta value. Under the finite realization it maps to the Bernoulli class $(B_{p-k}/k)_p$, and it vanishes in even weight.

\subsection{Formal symmetric multiple zeta values}

Let $\epsilon_{\wt}(w)=(-1)^{\wt(w)}w$. If $f,g:\h^1\to B$ are linear maps to a commutative algebra, their deconcatenation convolution is
\[
 (f\star g)(w)=\sum_{uv=w}f(u)g(v).
\]
For $\bullet\in\{*,\shuffle\}$ define
\begin{align}\label{eq:formal-symmetric-map}
 Z_{\mathcal S}^{\mathrm f,\bullet}
 =\bigl(Z^{\mathrm f,\bullet}\circ R\circ\epsilon_{\wt}\bigr)\star Z^{\mathrm f,\bullet}.
\end{align}
Explicitly,
\begin{align}\label{eq:formal-symmetric-explicit}
 Z_{\mathcal S}^{\mathrm f,\bullet}(z_{k_1}\cdots z_{k_r})
 =\sum_{j=0}^r(-1)^{k_1+\cdots+k_j}
 \zeta^{\mathrm f,\bullet,T}(k_j,\ldots,k_1)
 \zeta^{\mathrm f,\bullet,T}(k_{j+1},\ldots,k_r).
\end{align}

We prove the properties of \eqref{eq:formal-symmetric-map} inside the formal algebra. We begin with the product which does not require passing modulo $\zf{2}$.

\begin{lem}\label{lem:formal-symmetric-stuffle}
The map $Z_{\mathcal S}^{\mathrm f,*}:\h^1_*\to\Zf[T]$ is an algebra homomorphism.
\end{lem}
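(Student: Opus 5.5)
We need $Z_{\mathcal S}^{\mathrm f,*}=(Z^{\mathrm f,*}\circ R\circ\epsilon_{\wt})\star Z^{\mathrm f,*}$ to be multiplicative for the stuffle product. The plan is to use the general fact that the convolution of two algebra homomorphisms out of a bialgebra into a commutative algebra is again an algebra homomorphism, provided the coproduct is multiplicative. So the proof reduces to two checks: (a) both factors are stuffle homomorphisms $\h^1_*\to\Zf[T]$, and (b) deconcatenation $\Delta$ is an algebra map for $*$.

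Check (b) is the statement that $(\h^1_*,*,\Delta)$ is a bialgebra, which is the Hopf algebra structure recalled from~\cite{HoffmanIhara} in \cref{sec:prelim}. The factor $Z^{\mathrm f,*}$ is a homomorphism by definition. For the other factor, I will show that $R$ and $\epsilon_{\wt}$ are each algebra automorphisms of $\h^1_*$:

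\begin{itemize}
\item For $\epsilon_{\wt}$, this holds because $*$ is homogeneous for weight: the contraction $z_az_b\mapsto z_{a+b}$ preserves weight, so $\wt(u)+\wt(v)=\wt$ of every term of $u*v$.
\item For $R$, I will verify $R(u*v)=R(u)*R(v)$. The stuffle product can equivalently be defined by the mirrored recursion on last letters, $wz_a*vz_b=(w*vz_b)z_a+(wz_a*v)z_b+(w*v)z_{a+b}$. Alternatively, $u*v$ is the sum over order-preserving surjective merges of the letter positions of $u$ and $v$, with contractions, and this description is symmetric under reversing both orders.
\end{itemize}

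The composite $Z^{\mathrm f,*}\circ R\circ\epsilon_{\wt}$ is therefore a homomorphism.

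The convolution argument itself is short. With $\Delta(u*v)=\Delta(u)*\Delta(v)$ in Sweedler notation, we get
\[
(f\star g)(u*v)=\sum f(u_{(1)}*v_{(1)})\,g(u_{(2)}*v_{(2)})=\sum f(u_{(1)})f(v_{(1)})g(u_{(2)})g(v_{(2)}),
\]
and commutativity of $\Zf[T]$ rearranges this to $(f\star g)(u)\,(f\star g)(v)$. Unitality follows from $\Delta(\emptyword)=\emptyword\otimes\emptyword$ and the fact that both maps send $\emptyword$ to $1$.

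The main point needing care is the compatibility of $\Delta$ with $*$. If I prove it directly rather than cite it, I would use induction on total length with the first-letter recursion \eqref{eq:stuffle}. Splitting $z_aw*z_bv$ at a cut point, the pieces before the cut form a stuffle of a prefix of $z_aw$ with a prefix of $z_bv$, and the pieces after the cut form a stuffle of the remaining suffixes. This works because contractions never cross the cut: every letter of a stuffle term comes from a letter of $u$, a letter of $v$, or a contraction of one letter from each, and these sources are ordered compatibly. With this combinatorial description of $*$, the merge picture makes both (b) and the reversal symmetry transparent, and the lemma follows.
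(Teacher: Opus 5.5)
Your proposal is correct and follows the paper's proof essentially verbatim. Both arguments show that $\epsilon_{\wt}$ and $R$ are stuffle automorphisms, the first by weight additivity and the second by reversing the recursion \eqref{eq:stuffle}, and then use the compatibility $\Delta(u*v)=\Delta(u)*\Delta(v)$ to conclude that the convolution of two stuffle homomorphisms into the commutative algebra $\Zf[T]$ is again one; your merge-picture justification of that compatibility and your check of the unit just spell out what the paper takes from the Hopf algebra structure.
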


\begin{proof}
The maps $\epsilon_{\wt}$ and $R$ are algebra automorphisms of the stuffle algebra: the first assertion follows from additivity of the weight, and the second follows directly by reversing the recursion \eqref{eq:stuffle}. Hence
$Z^{\mathrm f,*}\circ R\circ\epsilon_{\wt}$ and $Z^{\mathrm f,*}$ are stuffle homomorphisms. The deconcatenation coproduct is compatible with the stuffle product,
\[
 \Delta(w*v)=\Delta(w)*\Delta(v),
\]
where the product on $\h^1\otimes\h^1$ is componentwise. Consequently, the convolution of two stuffle homomorphisms is again a stuffle homomorphism. Applying this observation to \eqref{eq:formal-symmetric-map} proves the lemma.
\end{proof}

For an admissible index $\boldsymbol{k}$, with the empty index allowed, define the leading-one series
\begin{align}\label{eq:leading-one-series}
 G_T^\bullet(\boldsymbol{k};X)
 =\sum_{j\geq0}Z^{\mathrm f,\bullet}(z_1^jz_{\boldsymbol{k}})X^j
 \in\Zf[T][[X]].
\end{align}

\begin{lem}\label{lem:leading-one}
For $\bullet\in\{*,\shuffle\}$ one has
\begin{align}
 G_T^\bullet(\boldsymbol{k};X)
 &=e^{XT}G_0^\bullet(\boldsymbol{k};X),\label{eq:leading-one-T}\\
 G_0^\shuffle(\boldsymbol{k};X)
 &=A^{\mathrm f}(X)G_0^*(\boldsymbol{k};X).\label{eq:leading-one-comparison}
\end{align}
\end{lem}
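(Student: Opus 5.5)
For \eqref{eq:leading-one-T}, I will use that both $Z^{\mathrm f,\bullet}$ are homomorphisms that differ from their $T=0$ specializations only through $z_1\mapsto T$. The aim is to show $\frac{\partial}{\partial T}Z^{\mathrm f,\bullet}(z_1^jz_{\boldsymbol k})=Z^{\mathrm f,\bullet}(z_1^{j-1}z_{\boldsymbol k})$. This is equivalent to $\partial_T G^\bullet_T=XG^\bullet_T$, and together with the initial value at $T=0$ it yields \eqref{eq:leading-one-T}.

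To get the derivative formula, write each word in $\h^1$ as a polynomial in $z_1$ with coefficients in $\h^0$.

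For the shuffle product, I will use the standard identity
\[
 z_1^j z_{\boldsymbol k}=\sum_{i=0}^{j}\frac{(-1)^{j-i}}{i!}\,z_1^{\shuffle i}\shuffle\bigl(\text{admissible word } w_{j-i}\bigr)
\]
from \cite{IKZ}. The words $w_m$ are obtained by shuffling $z_1$'s into $z_{\boldsymbol k}$ in the appropriate way, and they are admissible. Applying $Z^{\mathrm f,\shuffle}$ gives a polynomial $\sum_i T^i c_{j-i}/i!$ with constants $c_m\in\Zf$. Its $T$-derivative is the same expression with $j$ replaced by $j-1$, which is the required formula.

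The stuffle case is similar. One uses the recursion $z_1*z_1^{j-1}z_{\boldsymbol k}=j\,z_1^jz_{\boldsymbol k}+(\text{terms with fewer leading }z_1)$. A cleaner route is to use Hoffman's exponential isomorphism, or to argue directly by induction on $j$: differentiating a homomorphism that sends $z_1\mapsto T$ is the derivation $\partial/\partial z_1$ on the polynomial ring $\h^0_*[z_1]$. So it suffices to know that the derivation $D$ of $\h^1_*$ killing $\h^0$ with $D(z_1)=\emptyword$ satisfies $D(z_1^jz_{\boldsymbol k})=z_1^{j-1}z_{\boldsymbol k}$, and likewise for $\shuffle$. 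I plan to verify this by induction on $j$ from the product recursions \eqref{eq:stuffle} and \eqref{eq:shuffle}.

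For \eqref{eq:leading-one-comparison}, apply \eqref{eq:formal-reg-theorem} termwise: $G^\shuffle_T=\rho^{\mathrm f}(G^*_T)$ coefficientwise in $X$. By \eqref{eq:leading-one-T},
\[
G^*_T=\sum_n\frac{X^nT^n}{n!}\,G^*_0 .
\]
Since $\rho^{\mathrm f}$ is $\Zf[[X]]$-linear and $\rho^{\mathrm f}(T^n)/n!$ is the $u^n$-coefficient of $A^{\mathrm f}(u)e^{Tu}$, we get $\rho^{\mathrm f}(e^{XT})=A^{\mathrm f}(X)e^{XT}$. Hence
\[
G^\shuffle_T=A^{\mathrm f}(X)\,e^{XT}\,G^*_0 .
\]
On the other hand, \eqref{eq:leading-one-T} for shuffle gives $G^\shuffle_T=e^{XT}G^\shuffle_0$. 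Comparing the two and cancelling the invertible factor $e^{XT}$ gives the claim.

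The main obstacle is the derivation identity $D(z_1^jz_{\boldsymbol k})=z_1^{j-1}z_{\boldsymbol k}$. This is the step that needs the most care, especially for stuffle, where contractions appear; everything else is formal bookkeeping.
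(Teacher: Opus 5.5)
Your proposal is correct and follows essentially the paper's route. Your identity $\partial_T G_T^\bullet=XG_T^\bullet$ (equivalently $D(z_1^jz_{\boldsymbol k})=z_1^{j-1}z_{\boldsymbol k}$) is the infinitesimal form of the paper's translation identity $G^\bullet_{T+U}(\boldsymbol k;X)=e^{UX}G^\bullet_T(\boldsymbol k;X)$, which the paper also obtains from $\h^1_\bullet=\h^0_\bullet[z_1]$ by induction on the product recursions; note that your induction on $j$ must run over all admissible tails at once, since the stuffle remainder contains words such as $z_1^iz_2z_1^{j-2-i}z_{\boldsymbol k}$. For the second identity the paper likewise applies \eqref{eq:formal-reg-theorem} to $e^{XT}G^*_0$, but it simply sets $T=0$ instead of cancelling $e^{XT}$ against the shuffle case of \eqref{eq:leading-one-T}.
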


\begin{proof}
The polynomial decomposition $\h^1_\bullet=\h^0_\bullet[z_1]$ gives the translation identity
\[
 G_{T+U}^\bullet(\boldsymbol{k};X)
 =e^{UX}G_T^\bullet(\boldsymbol{k};X).
\]
Indeed, after comparing coefficients of $U^j$, this is precisely the regularization identity obtained by extracting $j$ leading copies of $z_1$. Equivalently, it follows by induction from the defining shuffle or stuffle recursion. Taking $T=0$ proves \eqref{eq:leading-one-T}. On the other hand, \cref{prop:formal-regularization} and \eqref{eq:leading-one-T} give
\[
 G_T^\shuffle(\boldsymbol{k};X)
 =\rho^{\mathrm f}\bigl(e^{XT}G_0^*(\boldsymbol{k};X)\bigr)
 =A^{\mathrm f}(X)e^{XT}G_0^*(\boldsymbol{k};X).
\]
Setting $T=0$ proves \eqref{eq:leading-one-comparison}.
\end{proof}

\begin{prop}\label{prop:formal-symmetric-regularization}
For every $w\in\h^1$, both $Z_{\mathcal S}^{\mathrm f,*}(w)$ and
$Z_{\mathcal S}^{\mathrm f,\shuffle}(w)$ are independent of $T$ and therefore belong to $\Zf$. Moreover,
\begin{align}\label{eq:formal-reg-congruence}
 Z_{\mathcal S}^{\mathrm f,*}(w)
 \equiv Z_{\mathcal S}^{\mathrm f,\shuffle}(w)
 \pmod{\zf{2}\Zf}.
\end{align}
\end{prop}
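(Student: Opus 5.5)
The plan is to work with generating series in the leading $z_1$'s and use \cref{lem:leading-one}. Write an arbitrary word as
\[
 w=z_{\boldsymbol{a}}z_1^{m}z_{\boldsymbol{b}}\qquad\text{or}\qquad w=z_1^{m},
\]
where the cut point separates the $T$-dependence. The $T$-dependence of $Z^{\mathrm f,\bullet}(u)$ comes only from a block of leading $z_1$'s. Under $R\circ\epsilon_{\wt}$, a leading block of the reversed prefix corresponds to a trailing block of $z_1$'s in the prefix $u$. It is therefore cleanest to group the terms of \eqref{eq:formal-symmetric-explicit} according to the maximal run of $z_1$'s at the split point.

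Concretely, I decompose $w$ (for the interesting case) as
\[
 w=z_{\boldsymbol{p}}\,z_1^{n}\,z_{\boldsymbol{q}},
\]
where $z_{\boldsymbol{p}}$ either is empty or ends in a letter $z_{p}$ with $p\geq2$, and $z_{\boldsymbol q}$ either is empty or begins with a letter of weight $\geq2$. This must be done for every maximal run of ones, since a split can occur inside any run. So I fix a split point $j$. Splits at positions where neither neighbour is $z_1$ contribute $Z^{\mathrm f,\bullet}$ of a reversed word and of an unreversed word, each with no leading $z_1$; these are constant in $T$. Splits inside a run $z_1^n$ sitting between $\boldsymbol p$ and $\boldsymbol q$ contribute
\[
 \sum_{i+l=n}(-1)^{\wt(\boldsymbol p)+i}\,Z^{\mathrm f,\bullet}(z_1^{i}R(z_{\boldsymbol p}))\,Z^{\mathrm f,\bullet}(z_1^{l}z_{\boldsymbol q}).
\]
This is the coefficient of $X^n$ in
\[
 (-1)^{\wt(\boldsymbol p)}\,G_T^\bullet(R\boldsymbol p;-X)\,G_T^\bullet(\boldsymbol q;X).
\]
By \eqref{eq:leading-one-T}, this product equals $e^{-XT}e^{XT}G_0^\bullet(R\boldsymbol p;-X)G_0^\bullet(\boldsymbol q;X)$, which is independent of $T$. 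Note that $R\boldsymbol p$ is admissible in the sense that it starts with $p\geq2$, so the lemma applies. Summing over all runs, including runs of length zero, gives $T$-independence for both $\bullet$.

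For the congruence, the same grouping together with \eqref{eq:leading-one-comparison} shows that the shuffle version of each run contribution is the coefficient of $X^n$ in
\[
 A^{\mathrm f}(-X)A^{\mathrm f}(X)\,G_0^*(R\boldsymbol p;-X)\,G_0^*(\boldsymbol q;X).
\]
The splits with no adjacent $z_1$ are the $n=0$ case, where the two regularizations agree on words without leading ones. Now compute
\[
 A^{\mathrm f}(X)A^{\mathrm f}(-X)=\exp\Bigl(\sum_{m\geq1}\tfrac{1}{m}\zf{2m}X^{2m}\Bigr).
\]
By \eqref{eq:formal-euler}, every $\zf{2m}\in\zf{2}\Zf$, so this series is $\equiv1$ modulo $\zf{2}\Zf$ coefficientwise. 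This yields \eqref{eq:formal-reg-congruence}.

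I expect the main obstacle to be the bookkeeping of the decomposition. I need to check that every term of \eqref{eq:formal-symmetric-explicit} is counted exactly once by the pairs (run, position within run), and that the reversal sign $(-1)^{\wt}$ of the prefix splits as $(-1)^{\wt(\boldsymbol p)}(-1)^{i}$, producing the $-X$. I also need the fact that $Z^{\mathrm f,\bullet}$ on words with no leading $z_1$ is $T$-free (the $n=0$ and non-run splits), which follows from $\h^1=\h^0[z_1]$. Finally, the degenerate case $w=z_1^m$, with both $\boldsymbol p$ and $\boldsymbol q$ empty, is covered by the same formula using $G^\bullet(\varnothing;X)$.
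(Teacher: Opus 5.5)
Your proposal is correct and is essentially the paper's proof: you group the cuts by the maximal block of $z_1$'s they pass through, identify each block's contribution as the coefficient of $X^n$ in $(-1)^{\wt(\boldsymbol p)}G_T^\bullet(R\boldsymbol p;-X)\,G_T^\bullet(\boldsymbol q;X)$, cancel the factors $e^{\mp XT}$ via \cref{lem:leading-one}, and obtain the congruence from $A^{\mathrm f}(X)A^{\mathrm f}(-X)\equiv1$ modulo $\zf{2}\Zf$ using \eqref{eq:formal-euler}. The bookkeeping you flag also works out as you describe: each cut is counted exactly once, the sign $(-1)^{\wt(\boldsymbol p)+i}$ produces the argument $-X$, and empty $\boldsymbol p$ or $\boldsymbol q$ is handled by $G^\bullet(\varnothing;X)$.
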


\begin{proof}
We group the summands in \eqref{eq:formal-symmetric-explicit} according to the maximal block of entries equal to~$1$ through which the cut passes. Write such a portion of the word as
\[
 z_{r_m}\cdots z_{r_1}z_1^h z_{s_1}\cdots z_{s_n},
\]
where $z_{\boldsymbol r}=z_{r_1}\cdots z_{r_m}$ and
$z_{\boldsymbol s}=z_{s_1}\cdots z_{s_n}$ are admissible, and either of them may be empty. The sum of the contributions from the $h+1$ cuts in this block is the coefficient of $X^h$ in
\begin{align}\label{eq:block-series}
 P^\bullet(\boldsymbol r,\boldsymbol s;X)
 =(-1)^{\wt(\boldsymbol r)}
 G_T^\bullet(\boldsymbol r;-X)G_T^\bullet(\boldsymbol s;X).
\end{align}
By \eqref{eq:leading-one-T}, the two exponential factors in \eqref{eq:block-series} are $e^{-XT}$ and $e^{XT}$, and hence cancel. Thus every group of summands, and therefore the full expression \eqref{eq:formal-symmetric-explicit}, is independent of~$T$.

For the comparison of the two regularizations, \eqref{eq:leading-one-comparison} gives
\begin{align}\label{eq:block-comparison}
 P^\shuffle(\boldsymbol r,\boldsymbol s;X)
 =A^{\mathrm f}(-X)A^{\mathrm f}(X)
 P^*(\boldsymbol r,\boldsymbol s;X).
\end{align}
The odd terms in the logarithm cancel, so that
\begin{align}\label{eq:A-even-factor}
 A^{\mathrm f}(-X)A^{\mathrm f}(X)
 =\exp\left(\sum_{n\geq1}\frac{\zf{2n}}{n}X^{2n}\right)
 \equiv1\pmod{\zf{2}\Zf[[X]]}
\end{align}
by \eqref{eq:formal-euler}. Comparing coefficients of $X^h$ in \eqref{eq:block-comparison} and summing over the blocks proves \eqref{eq:formal-reg-congruence}.
\end{proof}

The remaining relation is combinatorial. We give the operator argument in detail, since this is the point where the linear shuffle relation is transferred to the formal symmetric values.

\begin{lem}\label{lem:cut-shuffle}
Let $B$ be a commutative $\QQ$-algebra and let
$Z:\h^1_\shuffle\to B$ be an algebra homomorphism. Define
\[
 Z_{\mathcal S,Z}=(Z\circ R\circ\epsilon_{\wt})\star Z.
\]
Then, for all words $w,v\in\h^1$,
\begin{align}\label{eq:abstract-linear-shuffle}
 Z_{\mathcal S,Z}(w\shuffle v)
 =(-1)^{\wt(w)}Z_{\mathcal S,Z}(R(w)v).
\end{align}
\end{lem}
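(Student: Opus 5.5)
The identity is purely combinatorial, so I will prove it at the level of $\h^1\otimes\h^1$ and only apply $Z\otimes Z$ followed by multiplication at the very end. Define the linear map
\[
 \delta:\h^1\to\h^1\otimes\h^1,\qquad \delta(w)=\sum_{uv=w}\epsilon_{\wt}R(u)\otimes v .
\]
Then $Z_{\mathcal S,Z}=m\circ(Z\otimes Z)\circ\delta$. Since $Z$ is a shuffle homomorphism and $B$ is commutative, $m\circ(Z\otimes Z)$ is a homomorphism from $\h^1_\shuffle\otimes\h^1_\shuffle$ with the componentwise product. It therefore suffices to show that
\[
 \delta(w\shuffle v)-(-1)^{\wt(w)}\delta(R(w)v)
\]
lies in the span of elements of the form $(a\shuffle b)\otimes c-a\otimes(b\shuffle c)$. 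These elements have the same image under $m\circ(Z\otimes Z)$ by associativity and commutativity of $B$. In other words, I will show that the two sides become equal in the balanced tensor product $\h^1_\shuffle\otimes_{\h^1_\shuffle}\h^1_\shuffle$, which, being the target of the multiplication map, is what matters.

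I will prove the stronger identity in $\h\otimes_{\h_\shuffle}\h$ working with the letters $x,y$ rather than $z_k$, because shuffle and deconcatenation are most natural letterwise. The constraint that cuts fall after a $y$ needs care. A cut of $z_{k_1}\cdots z_{k_r}$ between $z$-blocks is the same as a letter cut after a $y$. Reversal $R$ on $z$-words corresponds to the letter reversal $\overline{\phantom{w}}$ composed with moving each $y$ to the front of its block. To avoid this mismatch, I will instead use the involution $\tau$ that reverses the word letter by letter and swaps $x\leftrightarrow y$ (duality). Alternatively, and more simply, I will use the ``exchange'' form: first prove the identity for $w$ of depth one, $w=z_a=x^{a-1}y$, and then induct on the depth of $w$.

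For the induction, if $w=w'z_a$, then $w\shuffle v$ can be expressed through $w'\shuffle(\cdot)$ and $z_a\shuffle(\cdot)$ using the identity $(w'\shuffle z_a)\shuffle v=w'\shuffle(z_a\shuffle v)$ together with $w'\shuffle z_a=w'z_a+(\text{terms})$. I am unsure this triangular unwinding closes cleanly, so my first choice is actually a direct generating argument.

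The direct argument is a key lemma for letters. For a single letter $\ell\in\{x,y\}$ and any word $u$, the deconcatenation of $\ell\shuffle u$ splits according to whether $\ell$ lands in the left or the right factor. The left-part terms, after $\epsilon_{\wt}R$, reassemble into $\epsilon R(\ell\shuffle u_1)$, and the right-part terms give $\ell\shuffle u_2$. Moving $\ell$ across the tensor sign in the balanced tensor product converts one into the other up to the sign $(-1)$ coming from $\epsilon_{\wt}$ when $\ell$ contributes weight. Iterating over the letters of $w$ then moves $w$ entirely to the left factor as $R(w)$ with sign $(-1)^{\wt(w)}$, which is exactly the right-hand side.

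The main obstacle is the boundary effect caused by the restriction that cuts occur only at $z$-block boundaries. Moving an $x$ across a cut that sits inside a block is not allowed, so the letterwise telescoping must be organised so that an $x^{a-1}y$ block is always moved as a whole. I would therefore handle the move for one full block $z_a$ at a time. In that step the interior terms, where the letters of $z_a$ are split between the two sides, must cancel in pairs. This cancellation is the computation I expect to require the most bookkeeping, and I would verify it first for $a=1,2$ before writing the general case.
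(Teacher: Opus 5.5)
\textbf{There is a genuine gap: the proposal reduces the lemma correctly but never proves the combinatorial identity that carries all of its content.}

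Your reduction is correct, and it is the same as the paper's first step. Multiplication identifies $\h^1_\shuffle\otimes_{\h^1_\shuffle}\h^1_\shuffle$ with $\h^1_\shuffle$, so ``equality in the balanced tensor product'' means exactly
\[
 S(w\shuffle v)=(-1)^{\wt(w)}S(R(w)v)\quad\text{in }\h^1_\shuffle,
\]
where $S=\sum_i S_i$ is the cut-shuffle operator and $Z_{\mathcal S,Z}=Z\circ S$. The balanced tensor product gives you no extra leverage. The paper obtains this identity from the Kaneko--Zagier identity $S\circ S_i=S$ \cite[Proposition~10]{KZ}. After that the lemma is one line, because $S_d(R(w)v)=(-1)^{\wt(w)}w\shuffle v$ for $d=\dep(w)$.

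None of your three routes establishes it.
\begin{enumerate}[(i)]
\item The letterwise argument is vacuous. For letters, $\sum_{uv=w}(-1)^{|u|}\overline{u}\shuffle v$ is the antipode convolution of the shuffle Hopf algebra, which vanishes for every nonempty $w$. So your ``stronger identity in $\h\otimes_{\h_\shuffle}\h$'' holds trivially and is not stronger. It says nothing about the block operator $S$, which is genuinely different because cuts are restricted to block boundaries and $R$ reverses blocks, not letters. For instance, $S(z_2)=2z_2$, while the letterwise sum for $xy$ is $0$.
\item The block-by-block move is where the real work lies, and you leave it undone. In the balanced tensor product you may only transport shuffle factors across $\otimes$. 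But $\epsilon_{\wt}R(uz_a)=(-1)^a z_a\,\epsilon_{\wt}R(u)$ is a concatenation, not a shuffle. Each move therefore creates correction terms, and their cancellation is precisely the statement $S\circ S_i=S$.
\item The depth induction is one you yourself flag as possibly not closing.
\end{enumerate}
To complete the argument, either cite \cite[Proposition~10]{KZ} or carry out the general block cancellation in full.
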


\begin{proof}
For a word $z_{k_1}\cdots z_{k_r}$, let
\[
 S_i(z_{k_1}\cdots z_{k_r})
 =(-1)^{k_1+\cdots+k_i}
 (z_{k_i}\cdots z_{k_1})\shuffle
 (z_{k_{i+1}}\cdots z_{k_r}),
 \qquad S=\sum_{i=0}^rS_i.
\]
Since $Z$ is a shuffle homomorphism, the definition of the convolution gives
$Z_{\mathcal S,Z}=Z\circ S$. Under the reversal dictionary
$z_{k_1}\cdots z_{k_r}\mapsto(k_r,\ldots,k_1)$ between our convention and the
increasing-index convention of~\cite{KZ}, and after the relabeling
$i\mapsto r-i$, the operators $S_i$ correspond to the cut operators of Kaneko
and Zagier, and \cite[Proposition~10]{KZ} gives
\[
 S\circ S_i=S\qquad(0\leq i\leq r).
\]
If $d=\dep(w)$, then
$S_d(R(w)v)=(-1)^{\wt(w)}w\shuffle v$. Hence
$S(w\shuffle v)=(-1)^{\wt(w)}S(R(w)v)$, and applying $Z$ proves the claim.
\end{proof}

\begin{prop}\label{prop:formal-symmetric-properties}
The common class $Z_{\mathcal S}^{\mathrm f}(w)\in\Zbarf$ of the two expressions in \cref{prop:formal-symmetric-regularization} satisfies, for all words $w,v\in\h^1$,
\begin{align}\label{eq:formal-symmetric-relations}
 Z_{\mathcal S}^{\mathrm f}(w*v)&=Z_{\mathcal S}^{\mathrm f}(w)Z_{\mathcal S}^{\mathrm f}(v),\\
 Z_{\mathcal S}^{\mathrm f}(w\shuffle v)&=(-1)^{\wt(w)}Z_{\mathcal S}^{\mathrm f}(R(w)v).\notag
\end{align}
\end{prop}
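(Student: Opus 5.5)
The plan is to use that $Z_{\mathcal S}^{\mathrm f}(w)$ has two representatives, $Z_{\mathcal S}^{\mathrm f,*}(w)$ and $Z_{\mathcal S}^{\mathrm f,\shuffle}(w)$. By \cref{prop:formal-symmetric-regularization}, they are $T$-independent elements of $\Zf$ with the same class modulo $\zf{2}\Zf$. Each of the two relations in \eqref{eq:formal-symmetric-relations} will be proved with the representative whose regularization matches the product in question. We then pass to $\Zbarf$ using the congruence \eqref{eq:formal-reg-congruence}.

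For the stuffle relation, \cref{lem:formal-symmetric-stuffle} shows that $Z_{\mathcal S}^{\mathrm f,*}:\h^1_*\to\Zf[T]$ is an algebra homomorphism. Its values on words are constants in $T$, so by bilinearity its values on all of $\h^1$ are constants as well. Hence
\[
 Z_{\mathcal S}^{\mathrm f,*}(w*v)=Z_{\mathcal S}^{\mathrm f,*}(w)\,Z_{\mathcal S}^{\mathrm f,*}(v)
\]
holds in $\Zf$. The quotient map $\Zf\to\Zbarf$ is a ring homomorphism. Therefore the identity descends, and since the class of $Z_{\mathcal S}^{\mathrm f,*}(u)$ is $Z_{\mathcal S}^{\mathrm f}(u)$ for every $u\in\h^1$ (extended linearly), this gives the first line of \eqref{eq:formal-symmetric-relations}.

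For the linear shuffle relation, we apply \cref{lem:cut-shuffle} with $B=\Zf[T]$ and $Z=Z^{\mathrm f,\shuffle}$, which is a shuffle homomorphism by construction. By \eqref{eq:formal-symmetric-map}, the associated map $Z_{\mathcal S,Z}$ is exactly $Z_{\mathcal S}^{\mathrm f,\shuffle}$. Thus
\[
 Z_{\mathcal S}^{\mathrm f,\shuffle}(w\shuffle v)=(-1)^{\wt(w)}Z_{\mathcal S}^{\mathrm f,\shuffle}(R(w)v)
\]
in $\Zf[T]$, and in fact in $\Zf$. Reducing modulo $\zf{2}\Zf$ and using \eqref{eq:formal-reg-congruence} linearly on $w\shuffle v$ and on $R(w)v$ gives the second line.

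The main point to check is that the congruence \eqref{eq:formal-reg-congruence}, stated for words, extends to arbitrary elements of $\h^1$ such as $w*v$ and $w\shuffle v$. This is immediate because both maps are $\QQ$-linear and $\zf{2}\Zf$ is a subspace. One should also confirm that the common class is well defined on linear combinations, which holds for the same reason. There is no real obstacle: the substantive inputs, namely the cut-operator identity, the convolution argument, and $T$-independence, are already established in \cref{lem:cut-shuffle}, \cref{lem:formal-symmetric-stuffle} and \cref{prop:formal-symmetric-regularization}.
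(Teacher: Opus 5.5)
Your proposal is correct and is the paper's proof: the stuffle relation comes from \cref{lem:formal-symmetric-stuffle} applied to the stuffle representative, the linear shuffle relation comes from \cref{lem:cut-shuffle} with $Z=Z^{\mathrm f,\shuffle}$, and the two are combined through the congruence \eqref{eq:formal-reg-congruence}. Your remark that this congruence extends linearly from words to $w*v$ and $w\shuffle v$ is the one detail the paper leaves implicit.
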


\begin{proof}
The first identity holds already for $Z_{\mathcal S}^{\mathrm f,*}$ by \cref{lem:formal-symmetric-stuffle}. For the second, use the shuffle representative $Z_{\mathcal S}^{\mathrm f,\shuffle}$ and apply \cref{lem:cut-shuffle} to $Z=Z^{\mathrm f,\shuffle}$. The two representatives define the same class by \eqref{eq:formal-reg-congruence}.
\end{proof}

\begin{dfn}\label{def:formal-symmetric}
For an index $\boldsymbol{k}=(k_1,\ldots,k_r)$, its \emph{formal symmetric multiple zeta value} is
\[
 \zsf{k_1,\ldots,k_r}
 =Z_{\mathcal S}^{\mathrm f}(z_{k_1}\cdots z_{k_r})\in\Zbarf.
\]
\end{dfn}

The following depth-two evaluation will be used as a check on the definition and to identify the true single values.

\begin{prop}\label{prop:formal-symmetric-depth-two}
For $a,b\geq1$ and $k=a+b$, one has
\begin{align}\label{eq:formal-symmetric-depth-two}
 \zsf{a,b}=
 \begin{cases}
  0,&k\text{ even},\\
  (-1)^a\binom{k}{a}\zf{k},&k\text{ odd}
 \end{cases}
 \qquad\text{in }\Zbarf.
\end{align}
\end{prop}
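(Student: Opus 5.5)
The plan is to avoid computing $\zsf{a,b}$ for general $a,b$ directly. Instead, I would transfer \cref{prop:low-depth} to the symmetric side through the relations of \cref{prop:formal-symmetric-properties}, and then evaluate a single depth-two value by hand.

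By \cref{prop:formal-symmetric-properties}, the linear map $\h^1\to\Zbarf$, $w\mapsto Z_{\mathcal S}^{\mathrm f}(w)$, is a stuffle homomorphism. It also kills every generator $w\shuffle v-(-1)^{\wt(w)}R(w)v$ of $\mathfrak L$. Since it is multiplicative, it therefore kills the whole ideal $\mathfrak L$ and factors through a homomorphism $\ZAf\to\Zbarf$ with $\zaf{\boldsymbol k}\mapsto\zsf{\boldsymbol k}$. This is the map $\varphi$ of \cref{main:formal-kz}, apart from surjectivity, which is not needed here. Applying this homomorphism to \cref{prop:low-depth}(ii) gives $\zsf{a,b}=0$ when $k$ is even. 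Applying it to \eqref{eq:formal-double-evaluation} gives
\[
 \zsf{a,b}=(-1)^a\binom{k}{a}\frac1k\,\zsf{k-1,1}
\]
for every $k\ge2$. So it remains to show $\zsf{k-1,1}=k\,\zf{k}$ in $\Zbarf$ for odd $k\geq3$; the case $k=1$ is vacuous.

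For this evaluation I would use the stuffle representative $Z_{\mathcal S}^{\mathrm f,*}$ at $T=0$, which is legitimate by \cref{prop:formal-symmetric-regularization}. By \eqref{eq:formal-symmetric-explicit} the three cuts contribute
\[
 \zeta^{\mathrm f,*,0}(k-1,1)+(-1)^{k-1}\zf{k-1}\,\zeta^{\mathrm f,*,0}(1)+(-1)^k\zeta^{\mathrm f,*,0}(1,k-1).
\]
The middle term vanishes because $z_1\mapsto T=0$. The stuffle relation $z_1*z_{k-1}=z_1z_{k-1}+z_{k-1}z_1+z_k$, evaluated at $T=0$, gives
\[
 \zeta^{\mathrm f,*,0}(1,k-1)=-\zf{k-1,1}-\zf{k}.
\]
Since $k$ is odd, the total is therefore $2\zf{k-1,1}+\zf{k}$.

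The remaining step is the formal Euler formula in odd weight $k$:
\[
 2\zf{k-1,1}=(k-1)\zf{k}-\sum_{j=2}^{k-2}\zf{j}\zf{k-j}.
\]
Granting it, each product $\zf{j}\zf{k-j}$ has one factor of even weight, so it lies in $\zf{2}\Zf$ by \eqref{eq:formal-euler}. Hence $2\zf{k-1,1}+\zf{k}\equiv k\,\zf{k}$, which is what we need.

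I expect this formula to be the main obstacle, since it must be derived inside $\Zf$ and not quoted numerically. I would first try the standard derivation from double shuffle in depth two. Compare $\zf{a}\zf{b}$ computed by stuffle with the same product computed by shuffle, which expresses it as a binomial sum of double zetas $\zf{\cdot,\cdot}$. Summing these identities over $a+b=k$ with suitable signs, and using the depth-two sum formula $\sum_{j}\zf{j,k-j}=\zf{k}$ together with the regularized relations for $z_1z_{k-1}$ (both of which lie in $\mathrm{EDS}_*$), isolates $\zf{k-1,1}$. This is exactly the computation of Gangl--Kaneko--Zagier in the formal double zeta space. Since it uses only $\operatorname{reg}_*(w*v-w\shuffle v)$, it holds in $\Zf$.
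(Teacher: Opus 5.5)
Your proof is correct, but it runs in the opposite direction from the paper's.

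The paper computes $\zsf{a,b}$ directly from \eqref{eq:formal-symmetric-explicit} at $T=0$. For even $k$, the stuffle relation leaves $(1+(-1)^a)\zeta^{\mathrm f,*}(a)\zeta^{\mathrm f,*}(b)-\zf{k}\in\zf{2}\Zf$. For odd $k$, it substitutes the full depth-two Euler decomposition of $\zf{a,b}$ for all $a\geq2$, quoted as a consequence of $\mathrm{EDS}_*$, and reduces the case $a=1$ to $(b,1)$ by stuffle.

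You instead use that $Z_{\mathcal S}^{\mathrm f}$ already factors through $\ZAf$ by \cref{prop:formal-symmetric-properties}. This is only the well-definedness half of \cref{thm:formal-kz}, so there is no circularity. You then pull back \cref{prop:low-depth} and reduce everything to the single value $\zsf{k-1,1}$. In effect you prove the content of \cref{cor:true-single} in odd weight first and deduce the proposition from it.

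What your route buys is a much smaller demand on $\Zf$: you need only Euler's formula for $\zf{k-1,1}$, not the general decomposition. The step you flag as the main obstacle is short. Sum $\zf{a}\zf{k-a}=\zf{a,k-a}+\zf{k-a,a}+\zf{k}$ over $2\leq a\leq k-2$, then insert $\sum_{j=2}^{k-1}\zf{j,k-j}=\zf{k}$. The latter is the generator $\operatorname{reg}_*(z_{k-1}*z_1-z_{k-1}\shuffle z_1)=z_k-\sum_{j=2}^{k-1}z_jz_{k-j}$ of $\mathrm{EDS}_*$. This gives the formula at once, with no shuffle expansion of $\zf{a}\zf{b}$ and no signs.

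The price is that your proof rests on the linear-shuffle transfer of \cref{lem:cut-shuffle} and on the finite-side computations of \cref{prop:low-depth}. The paper's direct computation uses neither. It therefore serves, as the paper intends, as an independent check that the symmetric side matches the formal finite evaluation \eqref{eq:formal-double-evaluation}.
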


\begin{proof}
Set $T=0$, which is allowed by \cref{prop:formal-symmetric-regularization}. From \eqref{eq:formal-symmetric-explicit},
\begin{align}\label{eq:formal-symmetric-depth-two-start}
 \zsf{a,b}
 ={}&\zeta^{\mathrm f,*}(a,b)
 +(-1)^a\zeta^{\mathrm f,*}(a)\zeta^{\mathrm f,*}(b)
 +(-1)^k\zeta^{\mathrm f,*}(b,a)
 \pmod{\zf{2}\Zf}.
\end{align}
If $k$ is even, the stuffle relation turns the right-hand side into
\[
 \bigl(1+(-1)^a\bigr)\zeta^{\mathrm f,*}(a)
 \zeta^{\mathrm f,*}(b)-\zf{k}.
\]
This belongs to $\zf{2}\Zf$: if $a$ is odd the product has coefficient zero, while if $a$ is even both factors have even weight and \eqref{eq:formal-euler} applies.

Suppose that $k$ is odd. The depth-two part of the extended double shuffle relation gives
\begin{align}\label{eq:formal-double-zeta-odd}
 \zeta^{\mathrm f,*}(a,b)\big|_{T=0}
 \equiv\frac12\left((-1)^a\binom{k}{a}-1\right)\zf{k}
 \pmod{\zf{2}\Zf}.
\end{align}
More precisely, for $a\geq2$, the Euler decomposition obtained from the regularized double shuffle relations is
\begin{align*}
 \zf{a,b}={}&(-1)^b
 \sum_{\substack{2\leq j\leq k-1\\j\,\mathrm{even}}}
 \left\{\binom{k-j-1}{a-1}+\binom{k-j-1}{b-1}
       +(-1)^b\delta_{j,a}\right\}\zf{j}\zf{k-j}\\
 &+\frac12\left((-1)^a\binom{k}{a}-1\right)\zf{k}.
\end{align*}
This identity holds in $\Zf$ because it is derived before evaluation from the generators of $\mathrm{EDS}_*$, see also~\cite[Section~2.2]{BIM}. Every term in the sum is divisible by $\zf{2}$ by \eqref{eq:formal-euler}, which proves \eqref{eq:formal-double-zeta-odd} for $a\geq2$. If $a=1$, the stuffle relation at $T=0$,
\[
 0=\zeta^{\mathrm f,*}(1,b)+\zeta^{\mathrm f,*}(b,1)+\zf{k},
\]
reduces the assertion to the already proved case $(b,1)$. Thus \eqref{eq:formal-double-zeta-odd} also covers all regularized boundary cases. Substituting it and its version with $a,b$ interchanged into \eqref{eq:formal-symmetric-depth-two-start} gives
$(-1)^a\binom{k}{a}\zf{k}$, as claimed.
\end{proof}

We can now prove the first main theorem.

\begin{thm}\label{thm:formal-kz}
The map
\begin{align*}
 \varphi:\ZAf&\longrightarrow\Zbarf,\\
 \zaf{\boldsymbol{k}}&\longmapsto\zsf{\boldsymbol{k}}
\end{align*}
is a well-defined surjective homomorphism of graded $\QQ$-algebras.
\end{thm}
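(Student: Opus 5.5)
Well-definedness and the homomorphism property come straight from the universal property of $\ZAf=\h^1_*/\mathfrak L$. Consider the linear map $Z_{\mathcal S}^{\mathrm f}:\h^1\to\Zbarf$, $w\mapsto$ the common class of \cref{prop:formal-symmetric-regularization}. By the first identity of \cref{prop:formal-symmetric-properties}, it is a homomorphism of $\QQ$-algebras $\h^1_*\to\Zbarf$. It sends $\emptyword$ to $1$: for the empty word the sum \eqref{eq:formal-symmetric-explicit} has only the term $j=0$, which is $1\cdot1$. By the second identity of \cref{prop:formal-symmetric-properties}, it kills every generator $w\shuffle v-(-1)^{\wt(w)}R(w)v$ of $\mathfrak L$. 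Since it is multiplicative, it then kills the whole ideal $\mathfrak L$ generated by them. Hence it factors through $\ZAf$, and the induced map sends $\zaf{\boldsymbol k}$ to $\zsf{\boldsymbol k}$ by \cref{def:formal-symmetric}.

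Gradedness is checked on each term of \eqref{eq:formal-symmetric-explicit}. At $T=0$, which is allowed by \cref{prop:formal-symmetric-regularization}, every summand is homogeneous of weight $\wt(\boldsymbol k)$. This holds because $\Zf$ is graded (the ideal $\mathrm{EDS}_*$ is homogeneous) and because $\zf{2}\Zf$ is a graded ideal, so $\Zbarf$ inherits the grading.

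Surjectivity is the substantive part, and I would follow Yasuda's argument, which uses only the regularized double shuffle relations. The algebra $\Zbarf$ is spanned by the classes of $\zf{\boldsymbol k}$ with $\boldsymbol k$ admissible, so it suffices to show by induction on depth, within fixed weight, that each such class lies in the span of the $\zsf{\boldsymbol l}$.

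The leading term of $\zsf{\boldsymbol k}$ is its $j=0$ term, $\zeta^{\mathrm f,*}(\boldsymbol k)$, at $T=0$. Every other term is a product $\pm\zeta^{\mathrm f,*}(k_j,\ldots,k_1)\zeta^{\mathrm f,*}(k_{j+1},\ldots,k_r)$. When both factors are nonempty, each has strictly smaller depth, and any regularized factor with leading $1$'s can be rewritten at $T=0$ via \eqref{eq:leading-one-T} in terms of admissible values of no larger depth. A product of lower-depth values is not itself of lower depth, however. Yasuda handles this by first showing that the $\zsf{}$-span is closed under products (here this follows from the stuffle homomorphism property just established) and then running a simultaneous induction on weight and depth. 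In that induction, the product terms are products of lower-weight values, which lie in the image by the weight induction.

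The remaining term is $j=r$, namely $(-1)^{\wt}\zeta^{\mathrm f,*}(k_r,\ldots,k_1)$. It has the same depth but is reversed, and it is not admissible when $k_r=1$. This is the main obstacle. Yasuda resolves it using the shuffle representative together with duality and a filtration argument showing that $\zeta(\boldsymbol k)+(-1)^{\wt}\zeta(\overleftarrow{\boldsymbol k})$ is effectively invertible modulo lower terms on a suitable basis of indices. I would transcribe his proof step by step, checking at each step that only relations in $\mathrm{EDS}_*$ (and \eqref{eq:formal-euler}) are used, exactly as the paper's introduction indicates, so that it applies verbatim to the universal realization $\Zf$.
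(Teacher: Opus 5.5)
Your well-definedness argument is the paper's. By \cref{prop:formal-symmetric-properties}, $Z_{\mathcal S}^{\mathrm f}$ is a unital stuffle homomorphism that kills the generators \eqref{eq:formal-linear-shuffle}, hence the ideal $\mathfrak L$; the grading is inherited because $\mathrm{EDS}_*$ and $\zf{2}\Zf$ are homogeneous. For surjectivity you, like the paper, ultimately rely on the claim that Yasuda's proof uses only regularized double shuffle relations and therefore runs in $\Zf$. At that level your proposal agrees with the paper.

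The mechanism you attribute to Yasuda would fail if you tried to carry it out. Put $r=\dep(\boldsymbol k)$ and evaluate by $Z^{\mathrm f,*}$ at $T=0$. The antipode identity \eqref{eq:antipode-identity} then gives $\zeta^{\mathrm f,*}(k_r,\ldots,k_1)\equiv(-1)^{r+1}\zeta^{\mathrm f,*}(k_1,\ldots,k_r)$ modulo products and depth $<r$. Hence
\[
 \zsf{\boldsymbol k}\equiv\bigl(1-(-1)^{\wt(\boldsymbol k)+r}\bigr)\,\zeta^{\mathrm f,*}(\boldsymbol k)
 \pmod{\text{products and depth }<r}.
\]
So for $\wt(\boldsymbol k)\equiv r\pmod 2$ the $j=r$ term cancels the $j=0$ term, and there is nothing to invert. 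For instance $\zsf{3}=0$, whereas $\zf{3}=\frac13\zsf{2,1}$ by \cref{prop:formal-symmetric-depth-two}. Values with $\wt\equiv\dep$, such as every odd single zeta value, can only be reached through symmetric values of higher depth. This is the expected depth drop of the Kaneko--Zagier correspondence. Consequently an induction on depth that uses $\zsf{\boldsymbol k}$ to capture $\zf{\boldsymbol k}$ cannot close, and the real content of Yasuda's argument is exactly the depth-raising step your sketch omits.

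You also invoke duality while allowing yourself only $\mathrm{EDS}_*$ and \eqref{eq:formal-euler}. Duality is not a defining relation of $\Zf$, so if it were genuinely used you would first need to derive it from $\mathrm{EDS}_*$. Either state surjectivity as the paper does, as an application of Yasuda's double-shuffle-only proof to $\Zf$, or reconstruct the depth-raising step explicitly.
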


\begin{proof}
By \cref{prop:formal-symmetric-properties}, the map
$Z_{\mathcal S}^{\mathrm f}$ is a stuffle homomorphism and annihilates every
generator \eqref{eq:formal-linear-shuffle} of $\mathfrak L$. It therefore
factors through $\ZAf$.

For surjectivity, Yasuda's proof that symmetric multiple zeta values generate
$\mathcal Z/\zeta(2)\mathcal Z$ uses only regularized double shuffle
relations, as he notes in the introduction of~\cite{Yasuda}. The same proof
therefore applies to their universal quotient $\Zf$ and shows that the formal
symmetric values span $\Zbarf$.
\end{proof}

\begin{cor}\label{cor:true-single}
For every $k\geq2$,
\[
 \varphi(\zzf{k})=\zf{k}\pmod{\zf{2}\Zf}.
\]
\end{cor}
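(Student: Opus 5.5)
We compute $\varphi(\zzf{k})$ straight from the definitions. By \eqref{eq:true-single}, $\zzf{k}=\frac1k\zaf{k-1,1}$, so $\varphi(\zzf{k})=\frac1k\zsf{k-1,1}$. It then suffices to evaluate $\zsf{k-1,1}$ with \cref{prop:formal-symmetric-depth-two}, taking $a=k-1$ and $b=1$, so that $a+b=k$.

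We split according to the parity of $k$.

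If $k$ is odd, then $a=k-1$ is even. \Cref{prop:formal-symmetric-depth-two} gives
\[
 \zsf{k-1,1}=(-1)^{k-1}\binom{k}{k-1}\zf{k}=k\,\zf{k}
\]
in $\Zbarf$. Dividing by $k$ yields $\varphi(\zzf{k})=\zf{k}$ modulo $\zf{2}\Zf$.

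If $k\geq2$ is even, the same proposition gives $\zsf{k-1,1}=0$, so $\varphi(\zzf{k})=0$. On the other side, \eqref{eq:formal-euler} in \cref{prop:formal-regularization} shows $\zf{k}\in\zf{2}\Zf$. Both sides therefore vanish in $\Zbarf$, and the congruence holds.

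There is no real obstacle here. The only thing to check is that the case $b=1$ is covered by \cref{prop:formal-symmetric-depth-two}. This is fine: that proposition is stated for all $a,b\geq1$, and its proof treats the non-admissible boundary cases through the stuffle relation at $T=0$.
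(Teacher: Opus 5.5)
Your proposal is correct and follows the paper's proof: both evaluate $\zsf{k-1,1}$ by \cref{prop:formal-symmetric-depth-two}. This gives $k\,\zf{k}$ for odd $k$, while for even $k$ both sides vanish in $\Zbarf$ by \eqref{eq:formal-euler}. One then divides by $k$ using \eqref{eq:true-single}.
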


\begin{proof}
By \cref{prop:formal-symmetric-depth-two},
$\zsf{k-1,1}=k\zf{k}$ in $\Zbarf$: for odd $k$ this is the second case of \eqref{eq:formal-symmetric-depth-two}, while for even $k$ both sides vanish by \eqref{eq:formal-euler}. Divide by~$k$ and use \eqref{eq:true-single}.
\end{proof}

\begin{conj}\label{conj:formal-kz}
The homomorphism $\varphi$ is an isomorphism.
\end{conj}

This conjecture also appears in~\cite[Conjecture~7.6]{Anzawa}.
To locate the difficulty in this conjecture, note that on the
vector space of indices, the cut-shuffle operator underlying the symmetric
construction has kernel exactly the linear span of the finite linear
shuffle expressions~\cite[Proposition~12]{KZ}. The definition of $\ZAf$,
however, quotients by the \emph{stuffle ideal} generated by this span.
Therefore the unresolved point is not another regularization issue, but the
interaction between multiplication and the cut-shuffle kernel, followed by
the formal multiple-zeta quotient. In particular, a multiplicatively
compatible splitting of the cut-shuffle map would prove
\cref{conj:formal-kz}, whereas the kernel statement alone does
not provide such a splitting.

\section{Parity}\label{sec:parity}

For $d\geq0$, let
\begin{align}\label{eq:depth-filtration}
 \Fil_d\ZAfw{k}
 =\Span_{\QQ}\{\zaf{k_1,\ldots,k_r}:r\leq d,\ k_1+\cdots+k_r=k\},
\end{align}
and let $\Fil_d\ZAw{k}\subset\ZA$ be defined in the same way with
$\zaf{\cdot}$ replaced by $\za{\cdot}$.
The product respects weight and depth filtrations. The antipode gives a first parity reduction which is valid in arbitrary depth.

\begin{thm}\label{thm:weak-parity}
If $k\equiv d\pmod2$, then
\begin{align}\label{eq:weak-parity}
 \Fil_d\ZAfw{k}\subseteq\Fil_{d-1}\ZAfw{k}
 +\sum_{\substack{k_1+k_2=k\\d_1+d_2=d\\d_1,d_2\geq2}}
 \Fil_{d_1}\ZAfw{k_1}\,\Fil_{d_2}\ZAfw{k_2}.
\end{align}
\end{thm}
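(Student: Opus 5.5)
We use the antipode identity \eqref{eq:antipode-identity} in $\h^1_*$ and combine it with the reversal relation \eqref{eq:formal-reversal} and depth-one vanishing from \cref{prop:low-depth}(i). Take a word $w=z_{k_1}\cdots z_{k_d}$ of weight $k$ and depth $d$ with $k\equiv d\pmod2$. Applying $\zaf{\cdot}$ to $\sum_{uv=w}S_*(u)*v=0$ gives
\[
 0=\zaf{w}+\zaf{S_*(w)}+\sum_{\substack{uv=w\\u,v\neq\emptyword}}\zaf{S_*(u)}\,\zaf{v}.
\]
The terms with $u=w$ and $u=\emptyword$ have been separated out. By \eqref{eq:antipode}, $S_*(w)=(-1)^d R(w)+(\text{words of depth}\leq d-1)$, because any contraction in $\Sigma$ lowers the depth. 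By reversal \eqref{eq:formal-reversal}, $\zaf{R(w)}=(-1)^k\zaf{w}$. The first two terms therefore combine to
\[
 \bigl(1+(-1)^{d+k}\bigr)\zaf{w}=2\zaf{w}
\]
modulo $\Fil_{d-1}\ZAfw{k}$. This is where the hypothesis $k\equiv d\pmod2$ enters: without it the leading terms would cancel rather than double.

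Next we examine the product terms. For a proper nonempty deconcatenation $w=uv$ with $\dep(u)=d_1\geq1$ and $\dep(v)=d_2\geq1$, we have $d_1+d_2=d$. The factor $\zaf{S_*(u)}$ lies in $\Fil_{d_1}\ZAfw{\wt(u)}$ and $\zaf{v}$ lies in $\Fil_{d_2}\ZAfw{\wt(v)}$. If $d_1,d_2\geq2$, the product is already in the sum on the right of \eqref{eq:weak-parity}.

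It remains to handle $d_1=1$ or $d_2=1$. If $d_2=1$, then $v=z_{k_d}$ and $\zaf{v}=0$ by \cref{prop:low-depth}(i). If $d_1=1$, then $S_*(u)=-z_{k_1}$, so the factor is $-\zaf{k_1}=0$ as well. All products with a depth-one factor therefore vanish. Dividing by $2$, we conclude that $\zaf{w}$ lies in the right-hand side of \eqref{eq:weak-parity}.

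The main point requiring care is the leading coefficient. We must check that $\Sigma$ keeps the uncontracted word $R(w)$ with coefficient exactly $1$, and that every other summand of $S_*(u)$ has strictly smaller depth. Both facts are immediate from the description of $\Sigma$ as replacing a (possibly empty) collection of separators by contractions. The cases $d\leq1$ are trivial, since then the statement reduces to $\Fil_1\ZAfw{k}=0$, which is \cref{prop:low-depth}(i), or to $\Fil_0\ZAfw{k}=0$ for $k\geq1$.
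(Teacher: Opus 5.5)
Your proposal is correct and is essentially the paper's proof. You apply the quotient map to the antipode identity \eqref{eq:antipode-identity}, combine the two end terms by reversal into $\bigl(1+(-1)^{k+d}\bigr)\zaf{w}$ modulo $\Fil_{d-1}\ZAfw{k}$, and discard the cut terms with a depth-one factor by \cref{prop:low-depth}. The only cosmetic difference is that you keep each factor $\zaf{S_*(u)}$ whole inside $\Fil_{d_1}\ZAfw{\wt(u)}$, whereas the paper first drops its contraction terms modulo $\Fil_{d-1}\ZAfw{k}$; both are equally valid.
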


\begin{proof}
Let $w=z_{a_1}\cdots z_{a_d}$ have weight $k$. Apply the quotient map to \eqref{eq:antipode-identity}. Modulo $\Fil_{d-1}\ZAfw{k}$, contractions in $\Sigma$ disappear, and we obtain
\[
 0\equiv\sum_{i=0}^d(-1)^i
 \zaf{a_i,\ldots,a_1}\zaf{a_{i+1},\ldots,a_d}.
\]
The two end terms are
\[
 \zaf{a_1,\ldots,a_d}+(-1)^d\zaf{a_d,\ldots,a_1}
 =\bigl(1+(-1)^{k+d}\bigr)\zaf{a_1,\ldots,a_d}
\]
by reversal. If $k\equiv d\pmod2$, their sum is twice the original value. The terms with a factor of depth one vanish by \cref{prop:low-depth}, leaving exactly the products in \eqref{eq:weak-parity}.
\end{proof}

In depths three and four the preceding proof gives explicit formulas.

\begin{cor}\label{cor:weak-low-depth}
If $a+b+c$ is odd, then
\begin{align}\label{eq:weak-depth-three}
 \zaf{a,b,c}
 =-\frac12\bigl(\zaf{a+b,c}+\zaf{a,b+c}\bigr).
\end{align}
If $a+b+c+d$ is even, then
\begin{align}
 \zaf{a,b,c,d}
 &=-\frac12\bigl(\zaf{a+b,c,d}+\zaf{a,b+c,d}
 +\zaf{a,b,c+d}+(-1)^{a+b}\zaf{a,b}\zaf{c,d}\bigr).
 \label{eq:weak-depth-four}
\end{align}
\end{cor}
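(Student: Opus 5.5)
Both formulas are the explicit form of the argument for \cref{thm:weak-parity}: we apply the quotient map to the antipode identity \eqref{eq:antipode-identity} for a word of depth three or four. This time we keep track of the contraction terms coming from $\Sigma$ instead of discarding them modulo lower depth. First we write out $S_*$ on words of depth at most three using \eqref{eq:antipode}. Explicitly, $S_*(z_a)=-z_a$ and $S_*(z_az_b)=z_bz_a+z_{a+b}$. For depth three,
\[
 S_*(z_az_bz_c)=-\bigl(z_cz_bz_a+z_{b+c}z_a+z_cz_{a+b}+z_{a+b+c}\bigr).
\]
For the depth-four word itself we only need the leading term $S_*(z_az_bz_cz_d)=z_dz_cz_bz_a+(\text{contractions})$.

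For depth three, take $w=z_az_bz_c$ with $a+b+c$ odd. The identity reads
\[
 \zaf{a,b,c}+S_*(z_a)*z_bz_c+S_*(z_az_b)*z_c+S_*(z_az_bz_c)=0.
\]
We pass to $\ZAf$ and use three facts. Depth-one values vanish by \cref{prop:low-depth}(i). The total-weight single $\zaf{a+b+c}$ vanishes. The remaining depth-three value is handled by reversal \eqref{eq:formal-reversal}: $\zaf{c,b,a}=-\zaf{a,b,c}$. After this, only the depth-two contraction terms survive, and we get
\[
 2\zaf{a,b,c}+\zaf{b+c,a}+\zaf{c,a+b}=0.
\]
Applying reversal again in odd weight gives $\zaf{b+c,a}=-\zaf{a,b+c}$ and $\zaf{c,a+b}=-\zaf{a+b,c}$. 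The signs need care here. Since $\zaf{b+c,a}=-\zaf{a,b+c}$, the sign in front of the depth-two terms must come out so that the result is \eqref{eq:weak-depth-three}. Fixing these signs correctly, by tracking the $(-1)^{\dep}$ factor from $\epsilon_{\dep}$ together with the reversal sign $(-1)^{\text{weight}}$, is the only delicate point, and we will check it against a small case.

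The depth-four case uses the same bookkeeping with $w=z_az_bz_cz_d$ and $a+b+c+d$ even. The terms $S_*(z_a)*(\cdots)$ and $S_*(z_az_bz_c)*z_d$ vanish, because each is a product with a depth-one value, or with $\zaf{a+b+c}$, $\zaf{a+b}$ and the like, all of which vanish. Products such as $\zaf{b+c,a}\zaf{d}$ also have a depth-one factor and vanish. This leaves three contributions:
\begin{itemize}
\item the middle product $S_*(z_az_b)*z_cz_d$, which gives $\bigl(\zaf{b,a}+\zaf{a+b}\bigr)\zaf{c,d}=(-1)^{a+b}\zaf{a,b}\zaf{c,d}$;
\item the reversed word $\zaf{d,c,b,a}=\zaf{a,b,c,d}$;
\item the depth-three contractions in $S_*(w)$, which by reversal in even weight become $\zaf{a+b,c,d}$, $\zaf{a,b+c,d}$ and $\zaf{a,b,c+d}$.
\end{itemize}
The depth-two and depth-one contractions inside $S_*(w)$ are not zero a priori. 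Depth-one contractions vanish by \cref{prop:low-depth}(i). The depth-two contractions $\zaf{\cdot,\cdot}$ have even total weight, so they vanish by \cref{prop:low-depth}(ii).

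Collecting the surviving terms gives $2\zaf{a,b,c,d}$ plus the three depth-three terms plus the product term, all summing to zero. This is \eqref{eq:weak-depth-four}. The main obstacle is again purely sign bookkeeping: deciding which contractions carry which power of $-1$ after reversal. I would verify the sign conventions by checking the depth-two analogue $\zaf{a,b}=-\tfrac12\zaf{a+b}=0$ for $a+b$ even, which is consistent with \cref{prop:low-depth}(ii).
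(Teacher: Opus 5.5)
Your approach is the same as the paper's. You apply the quotient map to the antipode identity \eqref{eq:antipode-identity}, drop the cuts that produce a depth-one factor, discard depth-one values and even-weight depth-two values by \cref{prop:low-depth}, and finish with reversal. Your depth-four part is correct as written and agrees term by term with the paper's intermediate identity.

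The depth-three part, however, has a sign slip that you flagged but did not fix. Your formula for $S_*(z_az_bz_c)$ is correct. The factor $(-1)^3$ from $\epsilon_{\dep}$ is applied before $\Sigma$, so it multiplies every term, including the contractions. The identity in $\ZAf$ is therefore
\[
 0=\zaf{a,b,c}-\zaf{c,b,a}-\zaf{b+c,a}-\zaf{c,a+b}-\zaf{a+b+c}.
\]
After reversal in odd weight this becomes $2\zaf{a,b,c}-\zaf{b+c,a}-\zaf{c,a+b}=0$. It is not $2\zaf{a,b,c}+\zaf{b+c,a}+\zaf{c,a+b}=0$, as you displayed.

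Your displayed version, combined with $\zaf{b+c,a}=-\zaf{a,b+c}$ and $\zaf{c,a+b}=-\zaf{a+b,c}$, would give $\zaf{a,b,c}=+\frac12\bigl(\zaf{a+b,c}+\zaf{a,b+c}\bigr)$, which has the wrong sign. The corrected identity gives \eqref{eq:weak-depth-three} directly, so you need no argument that the signs ought to come out right.

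The depth-two sanity check you propose cannot catch this error. In depth two, $\epsilon_{\dep}$ contributes $(-1)^2=1$, so the sign that went wrong in depth three never appears there.
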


\begin{proof}
For $w=z_az_bz_c$, the terms at the first and second cut in
\eqref{eq:antipode-identity} contain a depth-one factor and vanish.
Expanding the antipode of $w$ therefore gives
\[
 0=\zaf{a,b,c}-\zaf{c,b,a}-\zaf{b+c,a}
   -\zaf{c,a+b}-\zaf{a+b+c}.
\]
The last term vanishes, and reversal in odd total weight gives
\eqref{eq:weak-depth-three}.

For $w=z_az_bz_cz_d$, again the terms at the first and third cuts
vanish. At the middle cut one has
\[
 S_*(z_az_b)*z_cz_d=(z_bz_a+z_{a+b})*z_cz_d,
\]
whose second summand vanishes because $\zaf{a+b}=0$. In the antipode
of the full word, all terms with at least two contractions have depth at
most two and even weight, and hence vanish by \cref{prop:low-depth}.
The remaining identity is
\[\begin{aligned}
0={}&\zaf{a,b,c,d}+\zaf{b,a}\zaf{c,d}+\zaf{d,c,b,a}\\
 &+\zaf{c+d,b,a}+\zaf{d,b+c,a}+\zaf{d,c,a+b}.
\end{aligned}\]
Reversal in even total weight turns the last four terms into the original
depth-four value and the three depth-three terms displayed in
\eqref{eq:weak-depth-four}. Together with
$\zaf{b,a}=(-1)^{a+b}\zaf{a,b}$, this proves the formula.
\end{proof}

The product in \eqref{eq:weak-depth-four} is the only obstruction to strong parity in depth four.

\begin{conj}\label{conj:strong-parity}
If $k\equiv d\pmod2$, then
\[
 \Fil_d\ZAfw{k}\subseteq\Fil_{d-1}\ZAfw{k}.
\]
\end{conj}

For the symmetric realization, the image of this conjecture is known in
every depth by~\cite[Corollary to Theorem~4]{KZ}. Note that the result does not lift
back through the surjection $\pi_{\mathcal S}$, and the corresponding
finite statement in arbitrary depth remains open. A proof in $\ZAf$, on the other hand,
gives both realizations at once.

\begin{thm}
\label{thm:strong-parity-four}
\Cref{conj:strong-parity} holds for $d\leq4$.
\end{thm}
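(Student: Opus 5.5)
The cases $d\le 3$ are already settled by earlier results, so the plan is to clear them first and put all the work into $d=4$.

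For $d=1$, \cref{prop:low-depth}(i) gives $\zaf{a}=0$. For $d=2$, \cref{prop:low-depth}(ii) gives $\zaf{a,b}=0$ when $a+b$ is even. For $d=3$, \eqref{eq:weak-depth-three} already expresses $\zaf{a,b,c}$ through depth-two values.

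For $d=4$ and $k=a+b+c+d$ even, \eqref{eq:weak-depth-four} shows that it suffices to prove
\[
 \zaf{a,b}\zaf{c,d}\in\Fil_3\ZAfw{k}
\]
for all such products.

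If $a+b$ is even, then $c+d$ is even too, and the product is zero by \cref{prop:low-depth}(ii). So assume $a+b=m$ and $c+d=n$ are both odd. By \eqref{eq:formal-double-evaluation}, each factor is a multiple of $\zzf{m}$ and $\zzf{n}$ respectively. The task therefore becomes
\[
 \zzf{m}\zzf{n}\in\Fil_3\ZAfw{k}\qquad\text{for odd } m,n\ge 3,\ m+n=k.
\]

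To get these products, I would use the linear shuffle relation \eqref{eq:formal-linear-shuffle} with $w=z_p$ of depth one and $v=z_qz_rz_s$ of depth three, as the introduction suggests. A single such relation contains only depth-four terms. To generate products I would combine it with the stuffle relation $\zaf{p}\,\zaf{q,r,s}=0$ and with $\zaf{p,q}\zaf{r,s}$, expanded via \eqref{eq:stuffle}.

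The cleaner route is generating series. Put all $\zaf{k_1,k_2,k_3,k_4}$ of fixed weight into a polynomial in $x_1,\dots,x_4$ and write the $1+3$ linear shuffle relations as the standard functional equation: its left side is a sum over $\mathrm{sh}(1,3)$ with the variable substitutions $x_i\mapsto x_i+x_j$, and its right side is the reversed concatenation. Reduce everything modulo $\Fil_3$. Then use \eqref{eq:weak-depth-four} to replace each depth-four value by $-\tfrac12(-1)^{a+b}\zaf{a,b}\zaf{c,d}$. Every depth-four entry becomes, modulo $\Fil_3$, an explicit bilinear expression in the $\zzf{m}\zzf{n}$ with binomial coefficients, and each $1+3$ shuffle relation becomes a linear relation among the products $\zzf{m}\zzf{n}$ with $m\le n$ odd.

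It remains to show that these relations force every $\zzf{m}\zzf{n}$ to vanish modulo $\Fil_3$. I expect this to be the main obstacle. The plan is to find an explicit specialization, for instance $p=1$ and $v=z_{m-1}z_1z_{n-1}$, or a suitable combination over $p$, whose induced relation is triangular in $m$, or in which a single product appears with a nonzero coefficient while the others cancel by symmetry. In generating-series form this means checking that a polynomial of the form
\[
 (x+y)^{m-1}\cdots-\cdots
\]
has no common kernel with the product space. Concretely, I would compute the coefficient matrix, with rows indexed by the choices $(p;q,r,s)$ and columns by the unordered pairs $\{m,n\}$, and show that it has full column rank, ideally via an explicit nonvanishing minor obtained from the $p=1$ family. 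The danger is that these relations might only reproduce the antipode relations already used, giving no new information. The $1+3$ shuffle, however, is genuinely independent of stuffle data, so I would expect a nonzero coefficient like $(m+1)(n+1)-\dots$.
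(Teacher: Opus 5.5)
\textbf{There is a gap at the decisive step.} Most of your plan matches the paper: the cases $d\le3$, and the reduction of depth four via \eqref{eq:weak-depth-four} and \eqref{eq:formal-double-evaluation} to showing $\zzf m\zzf n\in\Fil_3\ZAfw{k}$ for odd $m,n\ge3$. The idea of reading the depth $1+3$ linear shuffle relations in generating-series form modulo $\Fil_3\ZAfw{k}$ is also the paper's. No extra stuffle relations are needed; the products enter only through \eqref{eq:weak-depth-four}.

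What is missing is the only step with real content: showing that these reduced relations actually force every product into $\Fil_3\ZAfw{k}$. You name candidate specializations and say you expect a nonzero coefficient, while conceding the relations might be degenerate. As written, the proposal therefore stops at the reduction \eqref{eq:parity-dictionary}.

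The missing computation is short. Modulo $\Fil_3\ZAfw{k}$, the depth-four generating series is $\frac12\sum_{(r,s)}\zzf r\zzf s\,f_r(X,Y)f_s(Z,W)$, with $f_r$ as in \eqref{eq:odd-f-polynomial}. One needs only $f_r(Y,Y)=0$, $f_r(0,Y)=-rY^{r-2}$ and $f_r(Y,0)=rY^{r-2}$ for odd $r$.

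The paper sets $X=W=0$ in \eqref{eq:five-term-linear-shuffle}, that is $w=z_1$ and $v$ with last entry $1$. Only two of the five substitutions survive, giving the triangular system \eqref{eq:triangular-product-system}. A downward induction then finishes the proof, with diagonal coefficient $(k-r_0)\binom{r_0}{2}\neq0$.

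Your own first candidate, $w=z_1$ and $v=z_{m-1}z_1z_{n-1}$, i.e.\ $X=Z=0$, works even more directly. The second and fourth substitutions vanish because $f_r(Y,Y)=f_s(W,W)=0$. The remaining three contribute $(rs-rs+rs)\,Y^{r-2}W^{s-2}$. So the coefficient of $Y^{m-2}W^{n-2}$ gives $\frac12mn\,\zzf m\zzf n\equiv0$, a diagonal system with no induction. For example, in weight $6$ the relation $2\zaf{1,2,1,2}+3\zaf{2,1,1,2}+2\zaf{2,1,2,1}=0$ reduces to $\frac92\zzf3^2\equiv0$. Had you carried out this evaluation, your argument would have been complete and slightly shorter than the paper's.
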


\begin{proof}
For $d=1$, all depth-one values vanish by \cref{prop:low-depth}. For
$d=2$, the relevant weight is even, and every depth-two value vanishes by
the same proposition. For $d=3$, the relevant weight is odd, and
\eqref{eq:weak-depth-three} expresses every depth-three value in depth at
most two. Thus only depth four requires an argument.

Fix an even weight $k$. Write $r=a+b$ and $s=c+d$, so that $r+s=k$.
Formula
\eqref{eq:weak-depth-four} and the depth-two evaluation
\eqref{eq:formal-double-evaluation} give
\begin{align}\label{eq:parity-dictionary}
 \zaf{a,b,c,d}\equiv
 \begin{cases}
  \dfrac12(-1)^{a+c}\binom{r}{a}\binom{s}{c}\zzf r\zzf s,
       &r\text{ odd},\\[3pt]
  0,&r\text{ even},
 \end{cases}
 \pmod{\Fil_3\ZAfw{k}}.
\end{align}
Thus it is enough to prove that every product $\zzf r\zzf s$, where
$r,s\geq3$ are odd and $r+s=k$, belongs to $\Fil_3\ZAfw{k}$.

For odd $r\geq3$, set
\begin{align}\label{eq:odd-f-polynomial}
 f_r(X,Y)=\frac{(Y-X)^r-Y^r+X^r}{XY}
 =\sum_{j=1}^{r-1}(-1)^j\binom rj X^{j-1}Y^{r-1-j}.
\end{align}
Let
\[
 \mathfrak F_k(X,Y,Z,W)=
 \sum_{a+b+c+d=k}\zaf{a,b,c,d}
 X^{a-1}Y^{b-1}Z^{c-1}W^{d-1}.
\]
Then \eqref{eq:parity-dictionary} is equivalent to
\begin{align}\label{eq:F-Q-congruence}
 \mathfrak F_k(X,Y,Z,W)&\equiv\frac12\mathfrak Q_k(X,Y,Z,W)
       \pmod{\Fil_3\ZAfw{k}},\\
 \mathfrak Q_k(X,Y,Z,W)&=
 \sum_{\substack{r+s=k\\r,s\geq3\text{ odd}}}
 \zzf r\zzf s f_r(X,Y)f_s(Z,W),\notag
\end{align}
where the sum is over the ordered pairs $(r,s)$.

We use only the linear shuffle relation with a word of depth one and a
word of depth three. Expanding
\eqref{eq:formal-linear-shuffle} for
$w=z_a$ and $v=z_bz_cz_d$, multiplying by
$(-X)^{a-1}Y^{b-1}Z^{c-1}W^{d-1}$ and summing, gives the exact identity
\begin{align}
0={}&\mathfrak F_k(X,Y,Z,W)+\mathfrak F_k(Y-X,Y,Z,W)\notag\\
 &+\mathfrak F_k(Y-X,Z-X,Z,W)
 +\mathfrak F_k(Y-X,Z-X,W-X,W)\notag\\
 &+\mathfrak F_k(Y-X,Z-X,W-X,-X).
\label{eq:five-term-linear-shuffle}
\end{align}
Note that this gives a direct proof of the generating-series identity,
without using a separate shuffle formula.

Insert \eqref{eq:F-Q-congruence} into
\eqref{eq:five-term-linear-shuffle} and set $X=W=0$. Since
\[
 f_r(0,Y)=-rY^{r-2},\qquad f_r(Y,Y)=0,
 \qquad f_s(Z,0)=sZ^{s-2},
\]
only the first and third substitutions survive. We obtain
\begin{align}
0\equiv{}&\sum_{\substack{r+s=k\\r,s\geq3\text{ odd}}}
 \zzf r\zzf s\,sZ^{s-2}\bigl(f_r(Y,Z)-rY^{r-2}\bigr)\notag\\
={}&\sum_{\substack{r+s=k\\r,s\geq3\text{ odd}}}
 \zzf r\zzf s\,sZ^{s-2}
 \sum_{j=1}^{r-2}(-1)^j\binom rjY^{j-1}Z^{r-1-j}
 \pmod{\Fil_3\ZAfw{k}}.
\label{eq:triangular-generating-series}
\end{align}
The coefficient of $Y^{\alpha-1}Z^{k-\alpha-3}$ yields
\begin{align}\label{eq:triangular-product-system}
 \sum_{\substack{r+s=k,\ r,s\geq3\text{ odd}\\\alpha\leq r-2}}
 s\binom r\alpha\zzf r\zzf s\equiv0
 \pmod{\Fil_3\ZAfw{k}}.
\end{align}
Choose successively $\alpha=r_0-2$ for
$r_0=k-3,k-5,\ldots$ down to the smallest odd $r_0\geq k/2$.
In \eqref{eq:triangular-product-system}, pairs with
$\max(r,s)<r_0$ do not occur, while pairs with
$\max(r,s)>r_0$ have already been treated. The remaining coefficient of
$\zzf{r_0}\zzf{k-r_0}$ is nonzero. Downward induction therefore gives
$\zzf r\zzf s\in\Fil_3\ZAfw{k}$ for all pairs. From
\eqref{eq:parity-dictionary} we get the result.
\end{proof}

\begin{rem}
We note that the original proof of \cref{thm:strong-parity-four} in \cite{Risan} differs from the one presented here. There the theorem was proved by an exhaustive computer-assisted calculation using group actions on generating series. As a result, one also obtains an explicit formula for the depth-four finite multiple zeta values in terms of depth three. We refer to \cite{Risan} for this formula.
\end{rem}

\begin{cor}
\label{cor:explicit-true-single-product}
Let $r,s\geq3$ be odd and put $k=r+s$. Then
\begin{align}\label{eq:four-one-product}
 \zaf{r-1,1,s-1,1}
 =\frac12\zaf{r-1,1}\zaf{s-1,1}
  +\frac12\bigl(\zaf{1,r-1,s}-\zaf{1,s-1,r}\bigr).
\end{align}
Consequently,
\begin{align}\label{eq:four-one-symmetry}
 \zaf{r-1,1,s-1,1}+\zaf{s-1,1,r-1,1}
 &=\zaf{r-1,1}\zaf{s-1,1},\\
 \zaf{r-1,1,s-1,1}-\zaf{s-1,1,r-1,1}
 &=\zaf{1,r-1,s}-\zaf{1,s-1,r}.\notag
\end{align}
\end{cor}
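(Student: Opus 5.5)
The plan is to specialize the explicit depth-four formula \eqref{eq:weak-depth-four} of \cref{cor:weak-low-depth} to $(a,b,c,d)=(r-1,1,s-1,1)$, and then to identify the resulting depth-three terms using reversal and a single stuffle product with $\zaf{1}$. The total weight $k=r+s$ is even, so \eqref{eq:weak-depth-four} applies. Since $a+b=r$ is odd, the sign $(-1)^{a+b}$ equals $-1$, and we obtain
\[
 \zaf{r-1,1,s-1,1}
 =\frac12\zaf{r-1,1}\zaf{s-1,1}
 -\frac12\bigl(\zaf{r,s-1,1}+\zaf{r-1,s,1}+\zaf{r-1,1,s}\bigr).
\]
The product term already has the required form. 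It therefore suffices to show that the bracket equals $\zaf{1,s-1,r}-\zaf{1,r-1,s}$.

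Next, reversal \eqref{eq:formal-reversal} in even weight gives $\zaf{r,s-1,1}=\zaf{1,s-1,r}$ and $\zaf{r-1,s,1}=\zaf{1,s,r-1}$. After this substitution, the required identity reduces to
\[
 \zaf{1,r-1,s}+\zaf{r-1,1,s}+\zaf{1,s,r-1}=0 .
\]
To prove it, expand the zero product $\zaf{1}\zaf{r-1,s}$; it vanishes by \cref{prop:low-depth}(i). The stuffle expansion produces the three insertion terms $\zaf{1,r-1,s}$, $\zaf{r-1,1,s}$ and $\zaf{r-1,s,1}$. It also produces the two contraction terms $\zaf{r,s}$ and $\zaf{r-1,s+1}$. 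Both contraction terms are depth-two values of even weight, so they vanish by \cref{prop:low-depth}(ii). Applying reversal once more, $\zaf{r-1,s,1}=\zaf{1,s,r-1}$, which gives exactly the displayed identity. This proves \eqref{eq:four-one-product}.

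For \eqref{eq:four-one-symmetry}, we write \eqref{eq:four-one-product} again with $r$ and $s$ interchanged; the hypotheses are symmetric in $r$ and $s$. In the resulting pair of identities, the product term is symmetric under the interchange, while the bracket $\zaf{1,r-1,s}-\zaf{1,s-1,r}$ is antisymmetric. Adding the two identities gives the first line of \eqref{eq:four-one-symmetry}, and subtracting them gives the second.

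No step appears to be a real obstacle. The only care needed is bookkeeping: tracking the sign $(-1)^{a+b}$, and checking that each reversal is applied in even weight so that no sign appears.
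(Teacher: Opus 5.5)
Your proof is correct and follows essentially the same route as the paper: specialize \eqref{eq:weak-depth-four} at $(r-1,1,s-1,1)$, use reversal in even weight on $\zaf{r,s-1,1}$, and use the stuffle expansion of $0=\zaf{1}\zaf{r-1,s}$, whose two contraction terms vanish, to handle the remaining two depth-three terms. The only difference is cosmetic: you reverse $\zaf{r-1,s,1}$ to $\zaf{1,s,r-1}$ and then back, whereas the paper leaves that term as it is.
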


\begin{proof}
Apply \eqref{eq:weak-depth-four} to
$\zaf{r-1,1,s-1,1}$. Since $r$ is odd, it gives
\[
 \zaf{r-1,1,s-1,1}
 =\frac12\zaf{r-1,1}\zaf{s-1,1}
 -\frac12\bigl(
 \zaf{r,s-1,1}+\zaf{r-1,s,1}+\zaf{r-1,1,s}\bigr).
\]
Reversal in the even weight $k$ identifies the first triple in
parentheses with $\zaf{1,s-1,r}$. On the other hand, the stuffle
expansion of $0=\zaf1\zaf{r-1,s}$, whose two depth-two contractions
vanish by \cref{prop:low-depth}, gives
\[
 \zaf{r-1,s,1}+\zaf{r-1,1,s}=-\zaf{1,r-1,s}.
\]
This proves \eqref{eq:four-one-product}. Adding and subtracting it from the
formula with $r$ and $s$ interchanged proves
\eqref{eq:four-one-symmetry}.
\end{proof}

\section{The formal double zeta space}\label{sec:double-zeta}

We recall the formal double zeta space of Gangl, Kaneko and
Zagier~\cite{GKZ}.

\begin{dfn}\label{def:double-zeta}
For $k\geq3$, the \emph{formal double zeta space} $\Dk$ is the $\QQ$-vector space generated by symbols
\[
 Z_{r,s},\quad P_{r,s}\quad(r,s\geq1,\ r+s=k),
 \qquad Z_k,
\]
subject to
\begin{align}\label{eq:formal-double-shuffle}
 P_{r,s}
 &=Z_{r,s}+Z_{s,r}+Z_k,\\
 &=\sum_{a+b=k}\left[\binom{a-1}{r-1}+\binom{a-1}{s-1}\right]Z_{a,b}.\notag
\end{align}
A realization of $\Dk$ in a $\QQ$-vector space $V$ is a linear map $\Dk\to V$, i.e. an assignment satisfying \eqref{eq:formal-double-shuffle}.
\end{dfn}

For even $k$, the quotient relevant to finite multiple zeta values is
\begin{align}\label{eq:reduced-double-zeta}
 \Dtk=\Dk/\mathcal P_k,
 \qquad
 \mathcal P_k=\QQ Z_k+\QQ Z_{1,k-1}
 +\Span_{\QQ}\{P_{r,s}:r,s\text{ even}\}.
\end{align}
We continue to write $Z_{r,s}$ and $P_{r,s}$ for their classes in
$\Dtk$. Since $Z_k=0$ there, the first relation in
\eqref{eq:formal-double-shuffle} becomes
\begin{align}\label{eq:product-in-Dtilde}
 P_{r,s}=Z_{r,s}+Z_{s,r}\qquad(r+s=k).
\end{align}
Here $Z_k$ and the product symbols $P_{r,s}$ with $r,s$ even model terms
divisible by $\zeta(2)$, while $Z_{1,k-1}$ corresponds to the regularized
boundary case.

\begin{prop}\label{prop:dimension-Dtilde}
For every even $k\geq4$,
\begin{align}\label{eq:dimension-Dtilde}
 \dim_{\QQ}\Dtk=\frac{k}{2}-2-\dim_{\QQ}\mathcal S_k.
\end{align}
\end{prop}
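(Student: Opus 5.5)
The plan is to reduce \eqref{eq:dimension-Dtilde} to the structure theorem of Gangl, Kaneko and Zagier for $\Dk$ in even weight~\cite{GKZ}. I would split the count into two parts: a spanning part, which shows that the odd--odd classes generate $\Dtk$, and a relation count, which identifies all relations among those classes with even period polynomials.

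\emph{Spanning.} First I would show that the classes of $Z_{r,s}$ with $r,s$ odd, together with $Z_k$ and the $P_{r,s}$ with $r,s$ even, span $\Dk$. This is the first half of the GKZ theorem. It is proved by writing \eqref{eq:formal-double-shuffle} for $r,s$ even and solving for the $Z_{\mathrm{ev},\mathrm{ev}}$. Using \eqref{eq:product-in-Dtilde} and $P_{r,s}=P_{s,r}$, one also eliminates the $Z_{\mathrm{ev},\mathrm{ev}}$ and rewrites them in terms of odd--odd symbols modulo $\mathcal P_k$. Consequently $\Dtk$ is spanned by the $k/2$ classes $Z_{r,k-r}$, $r$ odd, $1\le r\le k-1$. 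Since $Z_{1,k-1}\in\mathcal P_k$, one of these is already zero.

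\emph{Relations.} Next I would invoke the second half of the GKZ theorem. Modulo $\QQ Z_k+\Span\{P_{\mathrm{ev},\mathrm{ev}}\}$, the linear relations among the $Z_{\mathrm{od},\mathrm{od}}$ (with $r=1$ and $r=k-1$ allowed) are in bijection with $W_k^{\even}$. Each $p\in W_k^{\even}$ yields the relation $\sum_{r\ \mathrm{odd}} q_r Z_{r,k-r}\equiv 0$, where the coefficients $q_r$ are read off from $p(X+Y,Y)$, and the map $p\mapsto$ relation is injective. Hence
\[
 \dim\Dk/(\QQ Z_k+\Span\{P_{\mathrm{ev},\mathrm{ev}}\})=\tfrac k2-\dim W_k^{\even}=\tfrac k2-1-\dim\mathcal S_k .
\]
The final step is to quotient additionally by $\QQ Z_{1,k-1}$. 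This lowers the dimension by exactly one, giving \eqref{eq:dimension-Dtilde}, provided $Z_{1,k-1}$ is nonzero in the previous quotient.

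\emph{Main obstacle.} The delicate point is this nonvanishing, and in particular checking that the GKZ count is stated with the boundary symbols $Z_{1,k-1}$ and $Z_{k-1,1}$ included, not only $r,s\ge3$. To settle it, I would compute the relation attached to each $p\in W_k^{\even}$ and show that no combination of relations equals $Z_{1,k-1}$ alone. Equivalently, I would exhibit a realization of $\Dk$ killing $Z_k$ and the $P_{\mathrm{ev},\mathrm{ev}}$ but not $Z_{1,k-1}$. The natural candidate is the realization by classical double zeta values modulo $\zeta(2)$ with stuffle regularization at $T=0$. There \eqref{eq:formal-double-zeta-odd} for $a=1$ gives, via the stuffle relation, $\zeta^{\mathrm f,*}(1,k-1)\equiv-\frac12(k-1)\zf{k}$. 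The snag is that this is proportional to $\zf{k}$, which is itself $\zf{2}$-divisible in even weight, so that realization does not separate $Z_{1,k-1}$.

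I would therefore instead argue combinatorially with the GKZ parametrization. The noncuspidal polynomial $X^{k-2}-Y^{k-2}$ gives a relation involving $Z_{1,k-1}$ together with other odd terms, and cuspidal polynomials are divisible by $X^2Y^2$. From this I would check that the coefficient of $Z_{1,k-1}$ cannot be isolated, so that $Z_{1,k-1}\neq0$ in the quotient. If this combinatorial check fails, the fallback is to test nonvanishing against an explicit linear functional on the odd symbols that annihilates all GKZ relations.
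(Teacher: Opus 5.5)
Your reduction is the same as the paper's. GKZ give $\dim_{\QQ}\bigl(\QQ Z_k+\Span_{\QQ}\{P_{r,k-r}:r\text{ even}\}\bigr)=\dim_{\QQ}\mathcal S_k+1$ inside the $\frac k2$-dimensional space $\Dk$. Everything then rests on showing that $Z_{1,k-1}$ does not lie in this subspace, and \textbf{your proposal does not prove that step}.

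Your two attempts both fall short:
\begin{itemize}
\item You rightly abandon the realization at $T=0$. At a fixed value of $T$ it cannot work in general: in weight $4$, $Z_{1,3}\mapsto-\zeta(3,1)-\zeta(4)=-\frac54\zeta(4)$, which is a multiple of the image of $Z_4$. Also, \eqref{eq:formal-double-zeta-odd} is an odd-weight formula and does not apply here.
\item Your combinatorial plan starts from a false premise. The noncuspidal polynomial does not give a relation involving $Z_{1,k-1}$, and in fact no $p\in W_k^{\even}$ does. The coefficient of $Z_{1,k-1}$ is $q_{1,k-1}$, the constant term in $X$ of $p(X+Y,Y)$, which is $p(Y,Y)$. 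This vanishes because every element of $W_k^{\even}$ is antisymmetric. For $p=X^{k-2}-Y^{k-2}$ one gets $q_{1,k-1}=0$ and $q_{r,k-r}=1$ for $r\geq2$. The corresponding element is $\sum_{r\text{ odd}\geq3}Z_{r,k-r}=\frac14Z_k$, which comes from the two sum formulas. So there is nothing to ``isolate.''
\end{itemize}

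With this observation your argument closes at once. By the bijection you invoke from GKZ's Theorem~3, the subspace consists exactly of the elements whose odd--odd coordinate vector is $(q_{r,k-r})_{r\text{ odd}}$ for some $p\in W_k^{\even}$. All of these have vanishing $Z_{1,k-1}$-coordinate, while $Z_{1,k-1}$ has coordinate $1$.

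Alternatively, without Theorem~3, take the assignment $Z_{r,s}\mapsto\delta_{r,1}$, $P_{r,s}\mapsto\delta_{r,1}+\delta_{s,1}$, $Z_k\mapsto0$. It satisfies both relations in \eqref{eq:formal-double-shuffle}, since the shuffle side is $\binom{0}{r-1}+\binom{0}{s-1}$. It kills $Z_k$ and every $P_{r,s}$ with $r,s\geq2$, and sends $Z_{1,k-1}$ to $1$. This is exactly the separating functional your fallback asks for.

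The paper does the same thing through GKZ's $\kappa$-dependent realization: there the image of $Z_{1,k-1}$ depends on $\kappa$, whereas the images of $Z_k$ and the even product symbols do not. The separating information lies in the dependence on the regularization parameter, not in any single numerical realization.
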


\begin{proof}
In even weight the $k/2$ odd--odd symbols form a basis of $\Dk$.
The period-polynomial theorem of~\cite{GKZ} shows that
\[
 \dim_{\QQ}\left(\QQ Z_k+
 \Span_{\QQ}\{P_{r,k-r}:r\text{ even}\}\right)
 =\dim_{\QQ}\mathcal S_k+1.
\]
Here
\[
 Z_k=\frac{2}{k+1}
 \sum_{\substack{2\leq r\leq k-2\\r\ {\rm even}}}P_{r,k-r},
\]
so $Z_k$ already belongs to the span of the even product symbols.
Moreover, $Z_{1,k-1}$ is independent of this subspace: in the
$\kappa$-dependent realization of~\cite[(24)]{GKZ}, its image depends on
$\kappa$, whereas the images of $Z_k$ and the even product symbols do
not. Hence
\[
 \dim_{\QQ}\Dtk=\frac{k}{2}
 -\bigl(\dim_{\QQ}\mathcal S_k+1\bigr)-1,
\]
which is \eqref{eq:dimension-Dtilde}.
\end{proof}

The proof of \cref{prop:dimension-Dtilde} rests on the correspondence,
established in~\cite{GKZ}, between the even period polynomials
\eqref{eq:even-period-polynomials} and relations in $\Dk$. In $\Dtk$
this correspondence takes the following explicit form.

\begin{prop}\label{prop:period-polynomial-relations}
Let $k\geq4$ be even. For $p\in W_k^{\even}$ define $q_{r,s}\in\QQ$ for
$r+s=k$ by
\[
 p(X+Y,Y)=\sum_{r+s=k}\binom{k-2}{r-1}q_{r,s}X^{r-1}Y^{s-1}.
\]
Then
\begin{align}\label{eq:period-polynomial-relation}
 \sum_{\substack{r+s=k\\r,s\geq3\ \odd}}
 \bigl(q_{r,s}-q_{k-1,1}\bigr)Z_{r,s}=0
 \qquad\text{in }\Dtk,
\end{align}
and every linear relation among the classes $Z_{r,s}$ with $r,s\geq3$
odd in $\Dtk$ is of this form. The kernel of the assignment
$p\mapsto\eqref{eq:period-polynomial-relation}$ is the line
$\QQ(X^{k-2}-Y^{k-2})$, so that these relations form a space of
dimension $\dim_{\QQ}\mathcal S_k$.
\end{prop}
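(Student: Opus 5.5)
The plan is to deduce the statement from the Gangl--Kaneko--Zagier theorem on period polynomials and double zeta values, transported into the quotient $\Dtk$. I would then use the dimension formula of \cref{prop:dimension-Dtilde} to show that these relations are all the relations.

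\textbf{Step 1: the GKZ congruence.} Recall the form of the GKZ theorem~\cite{GKZ} in $\Dk$, in our normalisation of $q_{r,s}$. For $p\in W_k^{\even}$ it states
\[
 \sum_{\substack{r+s=k\\ r,s\ \odd}} q_{r,s}Z_{r,s}\equiv 3q_{k-1,1}Z_k
 \pmod{\Span_{\QQ}\{P_{r,s}:r,s\text{ even}\}}.
\]
I would check the normalisation (binomial factor and sign) against the paper's convention for $Z_{r,s}$, using the example $p=X^2Y^2(X^2-Y^2)^3$ if necessary. Passing to $\Dtk$ kills $Z_k$, the even product symbols, and the boundary term $q_{1,k-1}Z_{1,k-1}$. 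What remains is
\[
 \sum_{r,s\geq3\ \odd}q_{r,s}Z_{r,s}+q_{k-1,1}Z_{k-1,1}=0 .
\]

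\textbf{Step 2: eliminate $Z_{k-1,1}$.} This uses the formal version of the classical identity $\sum_{r\ \odd}\zeta(r,k-r)=\tfrac34\zeta(k)$. In $\Dk$ it should follow by summing the relations \eqref{eq:formal-double-shuffle} over suitable $(r,s)$, for instance over all $r$, and then separating parities. Its consequence is
\[
 \sum_{r+s=k,\ r,s\ \odd}Z_{r,s}\in\QQ Z_k+\Span_{\QQ}\{P_{r,s}:r,s\text{ even}\}.
\]
Hence in $\Dtk$ we get
\[
 Z_{k-1,1}=-\sum_{r,s\geq3\ \odd}Z_{r,s}.
\]
Substituting into Step~1 gives \eqref{eq:period-polynomial-relation}. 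I expect Step~2 to be the main computational point. Getting the exact linear combination of the defining relations right, and verifying that the coefficient of $Z_k$ is harmless, requires care with the binomial sums. I would first try summing $P_{r,s}$ over all $r$ with alternating signs $(-1)^r$, since that isolates the odd--odd part.

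\textbf{Step 3: kernel and exhaustion.} For $p=X^{k-2}-Y^{k-2}$ we have $p(X+Y,Y)=(X+Y)^{k-2}-Y^{k-2}$. Thus $q_{r,s}=1$ for $r\geq2$, so every coefficient $q_{r,s}-q_{k-1,1}$ vanishes and this line lies in the kernel. Now count dimensions. The odd--odd symbols form a basis of $\Dk$, and by Step~2 $Z_{k-1,1}$ lies in the span of the $k/2-2$ classes $Z_{r,s}$ with $r,s\geq3$ odd. So these classes span $\Dtk$, and by \cref{prop:dimension-Dtilde} the space of relations among them has dimension exactly $\dim_{\QQ}\mathcal S_k$.

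It then remains to see that $p\mapsto\eqref{eq:period-polynomial-relation}$ has kernel exactly $\QQ(X^{k-2}-Y^{k-2})$; dimension counting then shows it is onto the relation space. I would argue this by lifting back to $\Dk$. If the relation attached to $p$ is trivial, then the GKZ element of Step~1 is a multiple of $\sum_{\odd}Z_{r,s}$ modulo $\mathcal P_k$. Injectivity of the GKZ correspondence on $W_k^{\even}$ modulo its kernel then forces $p$ to be proportional to the noncuspidal polynomial, since that polynomial is exactly the one producing the all-ones coefficient vector.
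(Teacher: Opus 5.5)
Your Steps~1--2 follow the paper's forward direction. Your plan for exhaustion (show the kernel is exactly $\QQ(X^{k-2}-Y^{k-2})$, then count against $\dim_{\QQ}\mathcal S_k=\dim_{\QQ}W_k^{\even}-1$) reverses the paper's order, and that reversal is legitimate. The paper first gets exhaustion from the converse part of \cite[Theorem~3]{GKZ}: it lifts a relation to $\sum c_{r,s}Z_{r,s}-\mu Z_{1,k-1}\in\QQ Z_k+\Span_{\QQ}\{P_{r,s}:r,s\ \even\}$ in $\Dk$ and reads off $p$. It then obtains the kernel by rank--nullity.

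However, your kernel argument has a gap. Every GKZ element $\sum_{r,s\ \odd}q_{r,s}Z_{r,s}$ lies in $\QQ Z_k+\Span_{\QQ}\{P_{r,s}:r,s\ \even\}\subseteq\mathcal P_k$, and so does $\sum_{\odd}Z_{r,s}$. So ``a multiple of $\sum_{\odd}Z_{r,s}$ modulo $\mathcal P_k$'' holds for every $p$ and forces nothing. Moreover, $X^{k-2}-Y^{k-2}$ does not give the all-ones vector. As you computed, its $q_{1,k-1}$ is $0$, and its GKZ element is $\sum_{3\leq r\leq k-1,\ r\ \odd}Z_{r,k-r}=\frac14Z_k$. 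Triviality of \eqref{eq:period-polynomial-relation} only says $q_{r,s}=q_{k-1,1}$ for odd $r,s\geq3$. The coordinate $q_{1,k-1}$ is invisible in $\Dtk$, so you still have to exclude a $p$ whose GKZ element differs from $q_{k-1,1}$ times that of $X^{k-2}-Y^{k-2}$ by a nonzero multiple of $Z_{1,k-1}$. Along your route this needs the independence of $Z_{1,k-1}$ from $\QQ Z_k+\Span_{\QQ}\{P_{r,s}:r,s\ \even\}$ (the $\kappa$-realization in the proof of \cref{prop:dimension-Dtilde}). That is exactly the information you discard by working modulo $\mathcal P_k$.

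The same conflation appears in Step~2. In $\Dk$ one has $\sum_{r\ \odd}Z_{r,k-r}=Z_{1,k-1}+\frac14Z_k$; it is the even sum that equals $\frac34Z_k$. This lies in $\mathcal P_k$ but not in $\QQ Z_k+\Span_{\QQ}\{P_{r,s}:r,s\ \even\}$. There it is harmless, because you only use the conclusion in $\Dtk$, where it follows at once from the $\{1,k-1\}$ relation as in the paper.

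The simplest repair of Step~3 needs no double-zeta input at all. Write $p=\sum\binom{k-2}{r-1}p_{r,s}X^{r-1}Y^{s-1}$; reading \eqref{eq:even-period-polynomials} coefficientwise gives $p_{r,s}=q_{r,s}-q_{s,r}$. If $q_{r,s}=q_{k-1,1}$ for all odd $r,s\geq3$, then $p_{r,s}=0$ for these pairs. Also $p_{r,s}=0$ for even $r$, since $p$ has only even powers. Hence $p=p_{1,k-1}(Y^{k-2}-X^{k-2})$.

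With this fix your counting argument is complete. It needs no separate appeal to the converse of \cite[Theorem~3]{GKZ} beyond what is already packaged in \cref{prop:dimension-Dtilde} and in $\dim_{\QQ}W_k^{\even}=\dim_{\QQ}\mathcal S_k+1$.
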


\begin{proof}
Write $p(X,Y)=\sum_{r+s=k}\binom{k-2}{r-1}p_{r,s}X^{r-1}Y^{s-1}$. By
\cite[Theorem~3]{GKZ}, one has $q_{r,s}-q_{s,r}=p_{r,s}$ and
\begin{align}\label{eq:GKZ-theorem-three}
 \sum_{\substack{r+s=k\\r,s\ \even}}q_{r,s}Z_{r,s}
 \equiv 3\sum_{\substack{r+s=k\\r,s\ \odd}}q_{r,s}Z_{r,s}
 \pmod{\QQ Z_k}
\end{align}
in $\Dk$. Conversely, an element of $\Dk$, written in the basis
$\{Z_{r,s}:r,s\ \odd\}$, lies in
$\QQ Z_k+\Span_{\QQ}\{P_{r,s}:r,s\ \even\}$ if and only if its
coefficient vector is $(q_{r,s})_{r,s\ \odd}$ for a (unique)
$p\in W_k^{\even}$.

Since $p$ contains only even powers, $p_{r,s}=0$ for even $r$ and $s$,
so that $q_{r,s}=q_{s,r}$ there, and
\[
 2\sum_{\substack{r+s=k\\r,s\ \even}}q_{r,s}Z_{r,s}
 =\sum_{\substack{r+s=k\\r,s\ \even}}q_{r,s}\bigl(P_{r,s}-Z_k\bigr).
\]
In $\Dtk$ the even product symbols and $Z_k$ vanish, and
\eqref{eq:GKZ-theorem-three} becomes
\begin{align}\label{eq:period-open-relation}
 \sum_{\substack{r+s=k\\r,s\ \odd}}q_{r,s}Z_{r,s}=0
 \qquad\text{in }\Dtk.
\end{align}
The summand with $r=1$ vanishes. For the summand with $r=k-1$ we use
the sum formula in $\Dtk$: the relation
\eqref{eq:formal-double-shuffle} for $\{r,s\}=\{1,k-1\}$ reads
$P_{1,k-1}=Z_{k-1,1}+\sum_{a=1}^{k-1}Z_{a,k-a}$, while
$P_{1,k-1}=Z_{1,k-1}+Z_{k-1,1}+Z_k=Z_{k-1,1}$ in $\Dtk$. Hence
$\sum_{a=1}^{k-1}Z_{a,k-a}=0$. Removing the even part, which vanishes
as before, gives
\[
 Z_{k-1,1}=-\sum_{\substack{3\leq a\leq k-3\\a\ \odd}}Z_{a,k-a}.
\]
Substituting this into \eqref{eq:period-open-relation} yields
\eqref{eq:period-polynomial-relation}.

Conversely, suppose that $\sum c_{r,s}Z_{r,s}=0$ in $\Dtk$, the sum
being over odd $r,s\geq3$. By the definition
\eqref{eq:reduced-double-zeta} of $\Dtk$ there is a $\mu\in\QQ$ with
\[
 \sum_{\substack{r+s=k\\r,s\geq3\ \odd}}c_{r,s}Z_{r,s}
 -\mu Z_{1,k-1}
 \in\QQ Z_k+\Span_{\QQ}\{P_{r,s}:r,s\ \even\}
\]
in $\Dk$. By the converse part of \cite[Theorem~3]{GKZ} recalled
above, comparing coefficients in the basis $\{Z_{r,s}:r,s\ \odd\}$
gives a $p\in W_k^{\even}$ with $c_{r,s}=q_{r,s}$ for $r,s\geq3$, with
$q_{k-1,1}=0$ and $q_{1,k-1}=-\mu$. Hence
$c_{r,s}=q_{r,s}-q_{k-1,1}$, as claimed.

Finally, $p=X^{k-2}-Y^{k-2}$ satisfies
$p(X+Y,Y)=(X+Y)^{k-2}-Y^{k-2}$, so that $q_{r,s}=1$ for $r\geq2$ and
$q_{1,k-1}=0$. Thus $p$ lies in the kernel. The classes $Z_{r,s}$ with
$r,s\geq3$ odd span $\Dtk$, since the odd--odd symbols form a basis of
$\Dk$, and $Z_{1,k-1}$ and $Z_{k-1,1}$ have just been expressed in terms
of the remaining ones. The space of relations among these spanning
classes therefore has dimension
$(\frac k2-2)-\dim_{\QQ}\Dtk=\dim_{\QQ}\mathcal S_k
=\dim_{\QQ}W_k^{\even}-1$. By rank--nullity, the kernel is exactly the
line $\QQ(X^{k-2}-Y^{k-2})$.
\end{proof}

By strong parity, $\Fil_4\ZAfw{k}=\Fil_3\ZAfw{k}$ in even weight. We use
depth-four representatives because the map below is stated in terms of
$\zaf{r-1,1,s-1,1}$.

\subsection{A generating family in \texorpdfstring{$\Dtk$}{D-tilde}}

Fix an even integer $k\geq4$ and recall the elements $H_{r,s}$ from
\eqref{eq:H-source-element}. By \eqref{eq:product-in-Dtilde}, they satisfy
\begin{align}\label{eq:H-source-symmetry}
 H_{r,s}+H_{s,r}=rsP_{r,s}.
\end{align}

We first record the polynomial lemma used below. Let $V$ be a
$\QQ$-vector space and let $Z\in V[X,Y]$ be homogeneous of degree
$k-2$. Define
\begin{align}\label{eq:H-source-operator}
 \mathcal L(Z)=(X^2+Y^2)Z+XY(X+Y)(\partial_X+\partial_Y)Z.
\end{align}

\begin{lem}\label{lem:dihedral-odd-coefficients}
Suppose that $k\geq4$ is even and
\begin{align}\label{eq:dihedral-anti-invariance}
 Z(Y,X)=-Z(X,Y),\qquad Z(X,X-Y)=-Z(X,Y).
\end{align}
If
\[
 [X^rY^{k-r}]\mathcal L(Z)=0
 \qquad(3\leq r\leq k-3,\ r\text{ odd}),
\]
then $Z=0$.
\end{lem}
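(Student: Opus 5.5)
We will turn the vanishing of the odd coefficients of $\mathcal L(Z)$ into a linear recurrence for the coefficients of $Z$. Then we will use the two anti-invariances \eqref{eq:dihedral-anti-invariance} to show that the recurrence has only the trivial solution.

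Write $n=k-2$ and $Z=\sum_{j=0}^{n}c_jX^jY^{n-j}$ with $c_j\in V$. A direct expansion of \eqref{eq:H-source-operator} gives, for every $m$,
\[
 [X^mY^{k-m}]\mathcal L(Z)=(k-m+1)c_{m-2}+(k-2)c_{m-1}+(m+1)c_m ,
\]
with $c_j=0$ outside $0\le j\le n$. This is routine: $X^2+Y^2$ contributes $c_{m-2}+c_m$, and the four terms of $(X^2Y+XY^2)(\partial_X+\partial_Y)$ contribute $(n-m+2)c_{m-2}+(m-1+n-m+1)c_{m-1}+mc_m$. So the hypothesis says that for odd $m$ with $3\le m\le k-3$ the even-index coefficient is determined by its odd neighbours:
\[
 (k-2)c_{m-1}=-(k-m+1)c_{m-2}-(m+1)c_m .
\]
Next I record the elementary consequences of \eqref{eq:dihedral-anti-invariance}. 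Antisymmetry gives $c_{n-j}=-c_j$ and $Z(X,X)=0$. Combined with $Z(X,X-Y)=-Z(X,Y)$ at $Y=X$, this gives $Z(X,0)=0$, and by symmetry $Z(0,Y)=0$. Hence $c_0=c_n=0$, and $XY(X-Y)$ divides $Z$. Since $n$ is even, the recurrence covers every even index $m-1\in\{2,4,\dots,n-2\}$. Therefore the even part $E(X,Y)=\tfrac12\bigl(Z(X,Y)+Z(-X,Y)\bigr)$ is completely determined by the odd part $O=Z-E$.

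The plan is then to feed this into the relation $Z(X,X-Y)=-Z(X,Y)$. I would first rephrase the recurrence as a first-order differential identity. Namely, the part of $\mathcal L(Z)$ that is odd in $X$ equals $\alpha XY^{k-1}+\beta X^{k-1}Y$ for some $\alpha,\beta\in V$. Splitting $\mathcal L$ according to parity in $X$, this reads
\[
 (X^2+Y^2)O+XY\bigl(X\partial_Y+Y\partial_X\bigr)E+XY\bigl(X\partial_X+Y\partial_Y\bigr)O=\alpha XY^{k-1}+\beta X^{k-1}Y .
\]
By Euler's identity $(X\partial_X+Y\partial_Y)O=nO$, so this becomes
\[
 XY\bigl(X\partial_Y+Y\partial_X\bigr)E=\alpha XY^{k-1}+\beta X^{k-1}Y-\bigl(X^2+Y^2+nXY\bigr)O .
\]
Thus $E$ is obtained from $O$ by inverting the operator $X\partial_Y+Y\partial_X$ on even polynomials, which is injective in positive degree (apart from $(X^2-Y^2)^{n/2}$-type kernel, which I would exclude using $c_0=0$). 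Writing $Z=O+E$ and imposing $Z(X,X-Y)+Z(X,Y)=0$ then yields a linear system in the odd coefficients $c_1,c_3,\dots$ (up to antisymmetry, about $n/4$ unknowns) together with $\alpha,\beta$.

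The main obstacle is showing this last system is nondegenerate. What I would try first is a triangularity argument, setting $Y=X+T$ or evaluating at $Y=-X$ and expanding in powers of one variable. The map $(X,Y)\mapsto(X,X-Y)$ sends $X^jY^{n-j}$ to $X^j(X-Y)^{n-j}$. So the coefficient of $Y^{n-j}$ in the $\tau$-relation involves only $c_i$ with $i\le j$, and its leading term has coefficient $\pm1\pm1$ or a binomial. Taking these relations in order of increasing $j$, with $c_0=0$ as the base case, the goal is to show inductively that every $c_j$ vanishes. The even-index $c_j$ are eliminated through the recurrence at each step, and the diagonal coefficient is to be checked nonzero. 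If this triangularity fails at some index, the fallback is to note that the sign-isotypic space for the dihedral group of order $6$ generated by the two involutions has dimension about $n/6$. I would then verify that the $\tfrac k2-2$ linear functionals $[X^rY^{k-r}]\mathcal L$ already separate a basis of that space, for instance by pairing against $(X-Y)^{n}$-type test polynomials.
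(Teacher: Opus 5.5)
\textbf{There is a genuine gap.} Your preliminaries are correct: the coefficient formula is the paper's recurrence \eqref{eq:dihedral-recurrence}, and $c_0=c_n=0$ holds. But the step you call the main obstacle is the whole content of the lemma, and you leave it open. Your fallback, checking that the functionals separate the sign-isotypic space, is only a restatement of the claim.

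Moreover, the triangular induction you describe cannot start from $c_0=0$. In $Z(X,X-Y)+Z(X,Y)=0$ the coefficient of $X^jY^{n-j}$ is $\bigl(1+(-1)^j\bigr)c_j+(-1)^j\sum_{i<j}\binom{n-i}{n-j}c_i$, so the diagonal entry vanishes for every odd $j$. You can alternate the even-$j$ relations (diagonal $2$, giving $c_{2i}$) with your recurrence at $m=2i+1$ (coefficient $2i+2$ on $c_{2i+1}$). This determines $c_2,c_3,\dots,c_n$ in turn, but only in terms of the two initial values $c_0$ and $c_1$. Nothing local forces $c_1=0$. By your own formula, $2c_1$ is the coefficient of $XY^{k-1}$ in $\mathcal L(Z)$, which is exactly the odd coefficient (with its partner at $X^{k-1}Y$) excluded from the hypothesis. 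So the local relations leave a one-parameter family, and you need a global argument showing that this family is incompatible with the anti-invariances.

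The paper supplies exactly this argument. Put $z(t)=Z(t,1)$, $w(t)=\mathcal L(Z)(t,1)$ and $m=k-2$ (your $n$).
\begin{enumerate}[(1)]
\item The hypothesis and the antisymmetry of $\mathcal L(Z)$ give $w_{\mathrm{odd}}(t)=a(t-t^{m+1})$ with $a=2z'(0)=2c_1$.
\item The anti-invariances give $z(\pm1)=0$ and $z'(1)=z'(0)$. The paper reads the latter off $J'(0)=J'(1)=-2$ in the factorization $Z=J\cdot F(A_2,A_6)$; it also follows by differentiating \eqref{eq:dihedral-anti-invariance} directly.
\item Hence $w'(1)=ma$ and $w'(-1)=0$, so $w_{\mathrm{odd}}'(1)=ma/2$.
\item The explicit form in (1) gives $w_{\mathrm{odd}}'(1)=-ma$, and therefore $a=0$.
\end{enumerate}
The paper then finishes by induction on the coefficients $v_b$ of the invariant-theoretic expansion, with diagonal entries $-4(b+1)$. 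Once $c_1=0$ is known, your alternating scheme would finish equally well, without any invariant theory.

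\textbf{Two smaller errors.} First, your parity splitting has the operators interchanged. The operator $XY(X\partial_X+Y\partial_Y)$ changes the parity in $X$, while $XY(X\partial_Y+Y\partial_X)$ preserves it. So the odd part of $\mathcal L(Z)$ is $(X^2+Y^2)O+XY(X\partial_Y+Y\partial_X)O+nXYE$. Thus $E$ enters algebraically, exactly as in your coefficient recurrence, and there is no operator to invert; the two sides of your displayed identity do not even have the same parity. Second, the group generated by the two involutions has order $12$, not $6$, since the cube of their product is $-1$. Your dimension estimate of about $n/6$ is nevertheless correct.
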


\begin{proof}
The two transformations in \eqref{eq:dihedral-anti-invariance}
generate a dihedral reflection group of order $12$, of type $I_2(6)$,
containing six reflections. Its ring of invariants is
$\QQ[A_2,A_6]$, and the anti-invariant polynomials form the free
module over it generated by the product $J$ of the six linear forms
vanishing on the reflection lines~\cite[Chapter~3]{Humphreys},
where
\[
 J=XY(X-Y)(X+Y)(2X-Y)(X-2Y),\quad
 A_2=X^2-XY+Y^2,\quad A_6=X^2Y^2(X-Y)^2.
\]
A polynomial which is anti-invariant under this group is therefore of
the form
\begin{align}\label{eq:dihedral-factorization}
 Z=J\sum_{\substack{b\geq0\\6b\leq k-8}}
 v_bA_2^{(k-8-6b)/2}A_6^b,
\end{align}
with $v_b\in V$.
For $k=4,6$ the sum is empty, and there is nothing to prove.

Set $m=k-2$, $z(t)=Z(t,1)$ and $w(t)=\mathcal L(Z)(t,1)$. Homogeneity gives
\begin{align}\label{eq:dihedral-one-variable}
 w(t)=\bigl((m+1)t^2+mt+1\bigr)z(t)+(t-t^3)z'(t).
\end{align}
If $z(t)=\sum c_it^i$ and $w(t)=\sum d_jt^j$, then
\begin{align}\label{eq:dihedral-recurrence}
 d_j=(j+1)c_j+mc_{j-1}+(m-j+3)c_{j-2}.
\end{align}
The polynomial $\mathcal L(Z)$ is antisymmetric. Hence the assumption on its odd
coefficients gives
\begin{align}\label{eq:dihedral-boundary}
 w_{\mathrm{odd}}(t)=a(t-t^{m+1}),\qquad a=2z'(0).
\end{align}

Writing \eqref{eq:dihedral-factorization} at $Y=1$ gives
\[
 z(t)=J(t)F\bigl(t^2-t+1,t^2(t-1)^2\bigr),
 \qquad J(t)=J(t,1)=t(t-1)(t+1)(2t-1)(t-2).
\]
Since $J'(0)=J'(1)=-2$, we have $z'(1)=z'(0)$. Equation
\eqref{eq:dihedral-one-variable} now gives
$w'(1)=ma$ and $w'(-1)=0$. Thus
$w_{\mathrm{odd}}'(1)=ma/2$, whereas
\eqref{eq:dihedral-boundary} gives
$w_{\mathrm{odd}}'(1)=-ma$. It follows that $a=0$.

Order the terms in \eqref{eq:dihedral-factorization} by $b$. The
$b$-th term begins with $-2v_bt^{2b+1}$. Once the preceding terms have
vanished, \eqref{eq:dihedral-recurrence} with $j=2b+1$ gives
$-4(b+1)v_b=0$. Induction gives $v_b=0$ for every $b$.
\end{proof}

\begin{prop}\label{prop:H-spans-Dtilde}
For every even $k\geq4$, the elements
\[
 H_{r,s}\qquad(r,s\geq3\text{ odd},\ r+s=k)
\]
span $\Dtk$.
\end{prop}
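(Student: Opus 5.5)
Since $H_{r,s}$ is indexed by the same set as the spanning classes $Z_{r,s}$ ($r,s\geq3$ odd) of $\Dtk$ (see the proof of \cref{prop:period-polynomial-relations}), the plan is a duality argument. A family spans $\Dtk$ exactly when every linear functional on $\Dtk$ that kills it is zero. Linear functionals on $\Dtk$ are the realizations of $\Dk$ that kill $\mathcal P_k$, so the task is to show that such a realization vanishes once it also kills all $H_{r,s}$.

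\textbf{Step 1: generating polynomial.} Let $\lambda:\Dk\to\QQ$ be a realization with $\lambda(\mathcal P_k)=0$ and $\lambda(H_{r,s})=0$ for all odd $r,s\geq3$. Package it into a generating polynomial, for instance
\[
 Z(X,Y)=\sum_{r+s=k}\lambda(Z_{r,s})X^{r-1}Y^{s-1}
\]
together with the analogous polynomial for the $\lambda(P_{r,s})$, possibly after a binomial rescaling. The double shuffle relations \eqref{eq:formal-double-shuffle} then become the familiar functional equations of Gangl--Kaneko--Zagier:
\begin{itemize}
\item[(a)] $P(X,Y)=Z(X,Y)+Z(Y,X)$ up to the $Z_k$ term, which is zero here;
\item[(b)] $P(X,Y)=Z(X+Y,Y)+Z(X+Y,X)$.
\end{itemize}
The vanishing of $Z_k$, of $Z_{1,k-1}$ and of the even $P_{r,s}$ means that $P$ has only odd-odd monomials. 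After replacing $Z$ by a suitable combination, I will aim to show that $Z$, or a polynomial built from it such as its odd-odd part, satisfies the two anti-invariances \eqref{eq:dihedral-anti-invariance}. The relation (a) combined with the killed symmetric part should give $Z(Y,X)=-Z(X,Y)$, and the substitution in (b) should give $Z(X,X-Y)=-Z(X,Y)$.

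\textbf{Step 2: the $H_{r,s}$ as coefficients of $\mathcal L(Z)$.} I will translate each term of \eqref{eq:H-source-element} into coefficient extractions:
\begin{itemize}
\item $(rs-r+1)Z_{r,s}$ and $(s+1)Z_{s+1,r-1}$ come from the multiplication by $X^2+Y^2$ and the operator $XY(X+Y)(\partial_X+\partial_Y)$ acting on monomials;
\item the binomial sums over $P_{\ell,k-\ell}$ come from rewriting $P$ through (b).
\end{itemize}
The goal is to obtain $\lambda(H_{r,s})=c\cdot[X^rY^{k-r}]\mathcal L(Z)$ for a nonzero constant $c$, possibly after symmetrizing with $H_{s,r}$ via \eqref{eq:H-source-symmetry}.

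\textbf{Step 3: conclusion.} Once Step 2 holds, \cref{lem:dihedral-odd-coefficients} with $V=\QQ$ gives $Z=0$. Hence all $\lambda(Z_{r,s})=0$, and since these classes span $\Dtk$, $\lambda=0$, so the $H_{r,s}$ span.

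\textbf{Main obstacle.} The hard part is the bookkeeping in Steps 1 and 2: finding the normalization of $Z$ that is simultaneously dihedrally anti-invariant, using the quotient by $\mathcal P_k$ to remove the $Z_k$ and boundary terms, and for which $\lambda(H_{r,s})$ is precisely an odd coefficient of $\mathcal L(Z)$. The boundary terms $Z_{1,k-1}$ and $Z_{k-1,1}$ need particular care, and I will handle them with the sum formula $\sum_a Z_{a,k-a}=0$ in $\Dtk$. I would first verify the identity for $k=8,10$ by hand to fix the normalization, then prove it by comparing coefficients of $X^{r-1}Y^{s-1}$ using the binomial expansion of $Z(X+Y,Y)$.
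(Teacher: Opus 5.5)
Your overall route is the paper's, in dual form. A functional $\lambda$ on $\Dtk$ that kills every $H_{r,s}$ is the same as working in the quotient of $\Dtk$ by their span, and the endgame (generating polynomial, $H_{r,s}$ as a coefficient of $\mathcal L(Z)$, \cref{lem:dihedral-odd-coefficients}) is what the paper does.

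\textbf{Gap in Step 1.} As written, Step 1 does not close. From $\lambda(Z_k)=\lambda(Z_{1,k-1})=0$ and the even $P_{r,s}$ you only get that $P$ has odd--odd monomials. Then (a) reads $Z(X,Y)+Z(Y,X)=P(X,Y)$, and nothing you list forces the odd $P_{r,s}$ with $r,s\geq3$ to vanish. For instance, in $\widetilde{\mathcal D}_8$ the classes $Z_{3,5},Z_{5,3}$ form a basis, so $P_{3,5}\neq0$ there. Hence $Z$ need not be antisymmetric, the lemma does not apply, and the hedges (``a suitable combination'', ``its odd--odd part'') do not supply what is missing.

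\textbf{The missing idea.} The hypothesis $\lambda(H_{r,s})=0$ must be used before Step 2. By \eqref{eq:H-source-symmetry},
\[
 rs\,\lambda(P_{r,s})=\lambda(H_{r,s})+\lambda(H_{s,r})=0
 \qquad (r,s\geq3\text{ odd}).
\]
You cite this identity only as an optional symmetrization in Step 2. Once all interior $P$'s vanish, your sum formula $\sum_a\lambda(Z_{a,k-a})=0$ also gives $\lambda(P_{1,k-1})=0$: pair $a$ with $k-a$ for $2\leq a\leq k-2$. So $P\equiv0$.

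\textbf{How the rest then goes.} With $P\equiv0$, no renormalization is needed. Relation (a) gives $Z(Y,X)=-Z(X,Y)$ for the raw polynomial $Z$. Relation (b), after $X\mapsto X-Y$, gives $Z(X,X-Y)=-Z(X,Y)$. In Step 2 the binomial sum over $P_{\ell,k-\ell}$ in \eqref{eq:H-source-element} is not rewritten through (b); it simply drops out. Using $\lambda(Z_{a,k-a})=-\lambda(Z_{k-a,a})$ on the four remaining terms gives
\[
 2\lambda(H_{r,s})=-(s+1)\lambda(Z_{r-1,s+1})-(k-2)\lambda(Z_{r,s})-(r+1)\lambda(Z_{r+1,s-1})=-[X^rY^s]\mathcal L(Z).
\]
So your constant is $c=-\tfrac12$, and Step 3 goes through exactly as you describe.
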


\begin{proof}
Work in the quotient of $\Dtk$ by the span of the $H_{r,s}$. By
\eqref{eq:H-source-symmetry},
$P_{r,k-r}=0$ for odd $r$ with $3\leq r\leq k-3$.
The same equality for even $r$ follows from the definition of $\Dtk$.
The double shuffle relation for the pair $\{1,k-1\}$, together with
$Z_{1,k-1}=0$, gives
\[
 \sum_{r=1}^{k-1}Z_{r,k-r}=0.
\]
Pairing the terms indexed by $r$ and $k-r$ shows that
$P_{1,k-1}=0$. Thus
\[
 P_{r,k-r}=0\qquad(1\leq r\leq k-1).
\]

Form
\[
 Z(X,Y)=\sum_{r=1}^{k-1}Z_{r,k-r}X^{r-1}Y^{k-r-1}.
\]
These equalities and \eqref{eq:formal-double-shuffle} give
\[
 Z(Y,X)=-Z(X,Y),\qquad Z(X,X-Y)=-Z(X,Y).
\]
Using $P_{a,k-a}=0$, and hence
$Z_{a,k-a}=-Z_{k-a,a}$, in \eqref{eq:H-source-element} we obtain
\[
 2H_{r,s}=-(s+1)Z_{r-1,s+1}-(k-2)Z_{r,s}
 -(r+1)Z_{r+1,s-1}.
\]
The negative of the right-hand side is
$[X^rY^s]\mathcal L(Z)$. All the required coefficients of $\mathcal L(Z)$ therefore vanish,
and \cref{lem:dihedral-odd-coefficients} gives $Z=0$.
\end{proof}

Since there are $k/2-2$ elements $H_{r,s}$,
\cref{prop:dimension-Dtilde,prop:H-spans-Dtilde} also give
\begin{align}\label{eq:H-relation-dimension}
 \dim_{\QQ}\left\{(c_{r,s}):
 \sum_{r+s=k}c_{r,s}H_{r,s}=0\right\}
 =\dim_{\QQ}\mathcal S_k,
\end{align}
where the sum is over odd $r,s\geq3$.

\subsection{The maps \texorpdfstring{$\beta_k$ and $\beta_k^{\mathrm f}$}{beta-k and beta-k-f}}

The elements $H_{r,s}$ span $\Dtk$ by \cref{prop:H-spans-Dtilde}.
Hence, for odd $r,s\geq3$ with $r+s=k$, the rules
\begin{align}\label{eq:direct-beta-rules}
 \beta_k^{\mathrm f}(H_{r,s})
 &=\zaf{r-1,1,s-1,1},\notag\\
 \beta_k(H_{r,s})
 &=\za{r-1,1,s-1,1}
\end{align}
determine at most one linear map in each case, and such a map exists
if and only if the right-hand sides satisfy every linear relation
among the $H_{r,s}$.
If $\mathcal S_k=0$, the elements $H_{r,s}$ form a basis by
\cref{eq:dimension-Dtilde,prop:H-spans-Dtilde}, and both rules define a
map without any further assumption. In the cusp weights this is a
nontrivial condition.

The construction of these maps uses the following two identities.

\begin{conj}\label{conj:beta-identities}
For every even $k\geq4$ and all $a,b,c\geq1$ with $a+b+c=k$, one has
in $\ZAfw{k}$
\begin{align}\label{eq:formal-Hoffman-depth-three}
 \zaf{a,b,c}
={}&(-1)^{b-1}
 \sum_{\substack{n+m=k-1\\n,m\geq1}}
 \binom na\binom mc\,\zaf{n,1,m}\notag\\
&+(-1)^{a+c}\sum_{i=1}^{b-1}
 \binom{a+i}{a}\binom{b+c-i}{c}
 \zzf{a+i}\zzf{b+c-i}.
\end{align}
\end{conj}

The second identity used below, the \emph{harmonic identity}, is a
formal consequence of \cref{conj:beta-identities}.

\begin{prop}\label{prop:harmonic-from-hoffman}
Let $k\geq4$ be even and assume that \cref{conj:beta-identities} holds
in weight $k$. Then, for $2\leq r,s\leq k-2$ with $r+s=k$, one has in
$\ZAfw{k}$
\begin{align}\label{eq:formal-harmonic}
0=
 \sum_{\substack{i+j=r,\ a+b=s\\i,j,a,b\geq1\\i+a\ \mathrm{odd}}}
 \zaf{i,a}\zaf{j,b}
 +2(-1)^r\sum_{\substack{u+v=k-1\\u,v\geq1}}
 \left(\binom vr+\binom vs\right)\zaf{u,1,v}.
\end{align}
The same implication holds for finite multiple zeta values.
\end{prop}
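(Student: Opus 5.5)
We derive the harmonic identity \eqref{eq:formal-harmonic} by expanding suitable zero products with the stuffle product and then pushing every depth-three term onto the family $\zaf{u,1,v}$ with \cref{conj:beta-identities}.

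\textbf{Step 1 (choose the products).} Consider, for $r+s=k$, the sum
\[
 \Sigma_{r,s}=\sum_{\substack{i+j=r,\ a+b=s\\ i,j,a,b\geq1}}\epsilon_{i,a}\,\zaf{i,a}\zaf{j,b},
\]
with signs $\epsilon_{i,a}$ chosen to isolate the pairs with $i+a$ odd. Only those pairs survive anyway, since $\zaf{i,a}=0$ for $i+a$ even by \cref{prop:low-depth}(ii), and then $j+b=k-(i+a)$ is odd as well. Using \eqref{eq:formal-double-evaluation}, every surviving term equals
\[
 \pm\binom{i+a}{i}\binom{j+b}{j}\zzf{i+a}\zzf{j+b}.
\]
So the first sum in \eqref{eq:formal-harmonic} is a bilinear expression $\sum_{\ell\ \odd}c_\ell\,\zzf{\ell}\zzf{k-\ell}$ with explicit binomial convolution coefficients $c_\ell$.

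\textbf{Step 2 (produce the depth-three side).} Independently, expand stuffle products of two depth-two values, or equivalently use \eqref{eq:weak-depth-four} together with the antipode identity. This writes each such product as a combination of depth-four, depth-three and depth-two terms. The cleaner route is to start directly from \cref{conj:beta-identities}, because it already contains products $\zzf{a+i}\zzf{b+c-i}$. Concretely:
\begin{itemize}
 \item take \eqref{eq:formal-Hoffman-depth-three} for suitable triples $(a,b,c)$ with $a+b+c=k$;
 \item form the linear combination whose left-hand side vanishes for a known reason. Candidates are the sum formula \eqref{eq:formal-sum} in depth three restricted to fixed middle blocks, or the depth-three relations $\zaf{1}\zaf{a,b}=0$ and $\zaf{a}\zaf{b,c}=0$, whose depth-two contractions vanish in even weight.
\end{itemize}
The natural choice is to sum \eqref{eq:formal-Hoffman-depth-three} over $a+b=r$ and $b'+c=s$ with weights chosen so that the binomials $\binom na\binom mc$ collapse by Vandermonde. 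We want them to collapse to $\binom{v}{r}+\binom{v}{s}$, the coefficient on $\zaf{u,1,v}$ in \eqref{eq:formal-harmonic}. Reversal \eqref{eq:formal-reversal} in even weight, $\zaf{u,1,v}=\zaf{v,1,u}$, then symmetrizes the two binomials.

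\textbf{Step 3 (match the product terms).} Check that the product terms produced by the chosen combination of \eqref{eq:formal-Hoffman-depth-three} coincide, after \eqref{eq:formal-double-evaluation}, with $-\Sigma_{r,s}$. Once again this is a binomial convolution identity, of the type
\[
 \sum_i\binom{a+i}{a}\binom{b+c-i}{c}=\sum\binom{i+a}{i}\binom{j+b}{j}.
\]
For the classical finite statement, the same derivation runs verbatim under $\pi_{\mathcal A}$. This uses the fact that the classical version of \cref{conj:beta-identities} holds by Hoffman duality and Kina, and that $\pi_{\mathcal A}$ is an algebra homomorphism.

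\textbf{Main obstacle.} The hard step is finding the correct linear combination of instances of \eqref{eq:formal-Hoffman-depth-three} whose left sides cancel, and verifying the two binomial identities: the depth-three coefficients in Step 2 and the product coefficients in Step 3. I would first test small weights ($k=8,10$) to pin down the weights and signs, then prove the general identities by generating functions in the variables attached to $a$ and $c$.
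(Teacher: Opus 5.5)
\textbf{There is a genuine gap.} Your overall strategy is the right one: substitute \eqref{eq:formal-Hoffman-depth-three} into a combination of depth-three values that is known to vanish, then compare coefficients. The relation $\zaf{a}\zaf{b,c}=0$ is even on your list of candidates. But the proposal stops where the proof has to begin, because you never fix the combination. The ``natural choice'' you describe (summing over $a+b=r$ and $b'+c=s$ so that Vandermonde yields $\binom vr+\binom vs$) is not a well-defined combination of instances of \eqref{eq:formal-Hoffman-depth-three}. The fixed-middle sums you mention are also not known to vanish on their own; this is known only for middle entry $1$, by \cref{lem:restricted-sum}. As written, Steps 2 and 3 are a search plan rather than an argument.

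The missing step is the following. Sum $0=\zaf{s}\zaf{b,c}$ over $b+c=r$. The depth-two contractions vanish in even weight, and reversal gives $\sum_{b+c=r}\zaf{b,c,s}=\sum_{b+c=r}\zaf{s,b,c}$. Hence
\[
 \sum_{a+c=r}\zaf{a,s,c}+2\sum_{b+c=r}\zaf{s,b,c}=0 .
\]
This is the coefficient of $X^rY^s$ in the paper's $X^2Y\bigl(A(X,Y,X)+2A(Y,X,X)\bigr)=0$.

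Now insert \eqref{eq:formal-Hoffman-depth-three} into both sums.
\begin{itemize}
\item In the fixed-middle sum your Vandermonde idea does enter: $\sum_{a+c=r,\ a,c\geq1}\binom na\binom mc=\binom{k-1}r-\binom nr-\binom mr$. The $\binom{k-1}r$ part is killed by $\sum_{u+v=k-1}\zaf{u,1,v}=0$, which your plan never invokes.
\item In the fixed-first sum one needs an alternating partial sum instead: $\sum_{c=1}^{r-1}(-1)^{r-c-1}\binom mc=\binom{m-1}{r-1}+(-1)^r$. Here $\binom ns\binom{m-1}{r-1}=0$ because $n+m=k-1$.
\item Using $\zaf{n,1,m}=\zaf{m,1,n}$, these two pieces give exactly $2(-1)^r\sum\bigl(\binom vr+\binom vs\bigr)\zaf{u,1,v}$.
\end{itemize}

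On the product side, no convolution identity is needed, contrary to your Step 3. The product terms coming from the fixed-middle sum coincide termwise with $\sum\zaf{i,a}\zaf{j,b}$ after \eqref{eq:formal-double-evaluation}. Those coming from the fixed-first sum vanish, because $\sum_{c=1}^{v-1}(-1)^c\binom vc=0$ for odd $v$; this is the paper's observation $f_v(X,X)=0$. The paper carries out the same computation with the generating series $A$ and $T$.
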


\begin{proof}
All identities are read in weight $k$. Write
\[
 A(X,Y,Z)=\sum_{a+b+c=k}\zaf{a,b,c}X^{a-1}Y^{b-1}Z^{c-1},
 \qquad
 T(U,V)=\sum_{\substack{u+v=k-1\\u,v\geq1}}\zaf{u,1,v}U^{u-1}V^{v-1},
\]
and set $N_n(X,Y)=(X-Y)^n-(-Y)^n$. Multiplying
\eqref{eq:formal-Hoffman-depth-three} by $X^{a-1}Y^{b-1}Z^{c-1}$ and
summing over $a,b,c\geq1$ with $a+b+c=k$ gives
\begin{align}\label{eq:hoffman-summed}
 A(X,Y,Z)={}&\sum_{\substack{n+m=k-1\\n,m\geq1}}\zaf{n,1,m}\,
 \frac{N_n(X,Y)}{X}\,\frac{N_m(Z,Y)}{Z}\notag\\
 &+Y\sum_{\substack{u+v=k\\u,v\ \odd}}
 \zzf u\zzf v\,f_u(X,Y)f_v(Z,Y),
\end{align}
where $f_u$ is the polynomial \eqref{eq:odd-f-polynomial}. Indeed,
the binomial sums over $a$ and $c$ produce the first term by
\[
 \sum_{a=1}^{n}\binom naX^{a-1}(-Y)^{n-a}=\frac{N_n(X,Y)}{X},
\]
and in the second term we used, for odd $u$,
\[
 \sum_{a=1}^{u-1}\binom ua(-X)^aY^{u-a}=XY\,f_u(X,Y),
\]
together with $\zzf u=0$ for even $u$ and $\zzf1=0$.

Expanding the products $0=\zaf a\zaf{b,c}$ by the stuffle product and
using that all depth-two values of even weight vanish gives
$A(X,Y,Z)+A(Y,X,Z)+A(Y,Z,X)=0$, and in particular
\begin{align}\label{eq:cyclic-specialized}
 A(X,Y,X)+2A(Y,X,X)=0.
\end{align}
We substitute \eqref{eq:hoffman-summed} into
\eqref{eq:cyclic-specialized} and multiply by $X^2Y$. By
\cref{lem:restricted-sum} and reversal one has
$\sum_{u+v=k-1}\zaf{u,1,v}=0$, $T(U,V)=T(V,U)$ and
$T(-U,-V)=-T(U,V)$, and moreover $f_v(X,X)=0$ and
$N_m(X,X)=(-1)^{m+1}X^m$. In particular, the product term of
$A(Y,X,X)$ vanishes, so the product terms contribute exactly
$X^2Y^2\sum\zzf u\zzf v f_u(X,Y)f_v(X,Y)$. For the leading-one terms,
expanding $N_nN_m$ and using the three rules gives
\begin{align*}
 Y\sum_{n+m=k-1}\zaf{n,1,m}\,N_n(X,Y)N_m(X,Y)
 &=2Y^2(Y-X)\,T(Y,Y-X),\\
 2\sum_{n+m=k-1}\zaf{n,1,m}\,(-1)^{m+1}N_n(Y,X)X^{m+1}
 &=2X^2(X-Y)\,T(X,X-Y),
\end{align*}
the two left-hand sides being the leading-one parts of
$X^2Y\,A(X,Y,X)$ and of $2X^2Y\,A(Y,X,X)$. Altogether this
yields
\begin{align}\label{eq:harmonic-boxed}
 X^2Y^2\sum_{\substack{u+v=k\\u,v\ \odd}}
 \zzf u\zzf v\,f_u(X,Y)f_v(X,Y)
 &+2Y^2(Y-X)\,T(Y,Y-X)\notag\\
 &+2X^2(X-Y)\,T(X,X-Y)=0.
\end{align}
For $2\leq r\leq k-2$, the coefficient of $X^rY^{k-r}$ in
\eqref{eq:harmonic-boxed} is exactly \eqref{eq:formal-harmonic}, after
evaluating the products $\zaf{i,a}\zaf{j,b}$ by
\eqref{eq:formal-double-evaluation}. The coefficients of
$X^rY^{k-r}$ with $r\in\{0,1,k-1,k\}$ vanish identically by
\cref{lem:restricted-sum} and reversal. Every step used only the
defining relations of $\ZAf$ together with identities proved in
\cref{sec:formal-finite,sec:parity}, so the argument applies verbatim
after the realization $\pi_{\mathcal A}$.
\end{proof}

\begin{prop}\label{prop:finite-hoffman-duality}
\Cref{conj:beta-identities} holds for finite multiple zeta values: for
every even $k\geq4$, the image of
\eqref{eq:formal-Hoffman-depth-three} under
$\pi_{\mathcal A}$ is a valid identity in $\ZA$ for all
$a,b,c\geq1$ with $a+b+c=k$.
\end{prop}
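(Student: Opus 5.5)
The plan is to derive the image of \eqref{eq:formal-Hoffman-depth-three} from two classical inputs for finite multiple zeta values: Hoffman's duality~\cite{HoffmanFinite} and Kina's formulas~\cite{Kina}. The paper's remark after \cref{main:double-zeta-surjection} indicates exactly this route.

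\textbf{Step 1: Hoffman duality.} Hoffman's duality for finite multiple zeta values says that for every index $\boldsymbol k$ one has $\za{\boldsymbol k}=-\za{\boldsymbol k^\vee}$. Here $\boldsymbol k^\vee$ is the Hoffman dual, obtained by exchanging commas and plus signs in the $1$-string representation. In generating-series form this is equivalent to
\[
 \za{\boldsymbol k}=-\sum_{\boldsymbol k'\preceq\boldsymbol k}\za{\boldsymbol k'},
\]
where $\boldsymbol k'$ runs over indices obtained from $\boldsymbol k$ by combining adjacent entries. For a depth-three index $(a,b,c)$, the dual is a long index consisting mostly of $1$'s, with two or three entries larger than one, in positions governed by $a$ and $c$. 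The dual of an index $(n,1,m)$ is similarly explicit. I would use this to rewrite both $\za{a,b,c}$ and each $\za{n,1,m}$ as values of the shape $\za{1,\ldots,1,\ast,1,\ldots,1,\ast,1,\ldots,1}$.

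\textbf{Step 2: Kina's evaluations.} Kina~\cite{Kina} evaluates finite multiple zeta values of the form $\za{\{1\}^{p},\alpha,\{1\}^{q},\beta,\{1\}^{r}}$, and the related depth-three families, as binomial combinations of the values $\za{n,1,m}$. The remainder is a sum of products of Bernoulli classes $\zzf{u}\zzf{v}$ under the realization $\zzf{u}\mapsto(B_{p-u}/u)_p$. Concretely, I would match the binomial factors $\binom na\binom mc$ with Kina's coefficients. The sign $(-1)^{b-1}$ and the product terms $\binom{a+i}{a}\binom{b+c-i}{c}$ for $1\le i\le b-1$ should arise from the $b-1$ interior ones in the dual index. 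If needed, I would apply the reversal \eqref{eq:finite-reversal} to align with Kina's increasing-index convention. Since $k$ is even, reversal preserves signs.

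\textbf{Step 3: comparison via generating series.} Rather than checking coefficients one by one, I would assemble both sides into the series $A(X,Y,Z)$ and $T(U,V)$ from the proof of \cref{prop:harmonic-from-hoffman}. The target then becomes \eqref{eq:hoffman-summed} in the finite realization. The main obstacle is this bookkeeping, that is, verifying that the binomial sums from duality and Kina's formula collapse exactly to
\[
 \frac{N_n(X,Y)}{X}\,\frac{N_m(Z,Y)}{Z}
 \qquad\text{and}\qquad
 Y f_u(X,Y)f_v(Z,Y).
\]
For this I would use the binomial identities already displayed in that proof. As a sanity check, I would test the identity in the boundary cases $b=1$, where the product sum is empty and the claim should reduce to a pure duality statement. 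I would also use the depth-two evaluation \eqref{eq:formal-double-evaluation} for the product terms. The remaining step is to confirm that no $\zeta(2)$-type terms survive, which holds because $\A$-valued even-weight single values vanish.
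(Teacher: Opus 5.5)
There is a genuine gap, and it sits exactly where the proof has content. Hoffman's duality~\cite{HoffmanFinite} is a statement about \emph{star} values, $\zeta^\star_{\mathcal A}(\boldsymbol k)=-\zeta^\star_{\mathcal A}(\boldsymbol k^\vee)$. The non-star identity $\za{\boldsymbol k}=-\za{\boldsymbol k^\vee}$ in your Step~1 is not a theorem. Your ``equivalent form'' $\za{\boldsymbol k}=-\sum_{\boldsymbol k'\preceq\boldsymbol k}\za{\boldsymbol k'}$ does not even involve $\boldsymbol k^\vee$: for $\boldsymbol k=(a,b)$ it asserts $2\za{a,b}=-\za{a+b}=0$.

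Worse, the non-star version would destroy the very terms you need. Kina's binomial identity reads
\[
 \za{(k_1,k_2,k_3)^\vee}=(-1)^{k_2-1}\sum_{\substack{n_1+1+n_3=k\\n_1,n_3\geq1}}\binom{n_1}{k_1}\binom{n_3}{k_3}\za{(n_1,1,n_3)^\vee}.
\]
Applying your Step~1 to both sides would give \eqref{eq:formal-Hoffman-depth-three} with no product sum at all. For the palindromic index $(a,b,c)=(1,4,1)$ this would force $9\,\pi_{\mathcal A}(\zzf{3})^2=0$. Your plan contains no identity that produces the terms $\binom{a+i}{a}\binom{b+c-i}{c}\pi_{\mathcal A}(\zzf{a+i}\zzf{b+c-i})$. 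Saying they ``should arise from the $b-1$ interior ones'' does not locate them.

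Two ingredients are missing.

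\textbf{Star equals non-star in even weight.} In even weight $k$ every depth-three star value equals the non-star value, $\zeta^\star_{\mathcal A}(a,b,c)=\za{a,b,c}$. The reason is that the contraction terms $\za{a+b,c}$, $\za{a,b+c}$ and $\za{k}$ have depth at most two and even weight, so they vanish by \cref{prop:low-depth}. This is the only bridge from statements about star values, and about duals, to the non-star values $\za{a,b,c}$ and $\za{n,1,m}$ in \eqref{eq:formal-Hoffman-depth-three}.

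\textbf{Origin of the product terms.} They come from Kina's comparison between the non-star value of the dual and the star value of the original index,
\[
 \za{(k_1,k_2,k_3)^\vee}=\zeta^\star_{\mathcal A}(k_1,k_2,k_3)-(-1)^{k_1+k_3}\sum_{i=1}^{k_2-1}\binom{k_1+i}{k_1}\binom{k_2+k_3-i}{k_3}\pi_{\mathcal A}\bigl(\zzf{k_1+i}\zzf{k_2+k_3-i}\bigr),
\]
together with $\za{(n_1,1,n_3)^\vee}=\zeta^\star_{\mathcal A}(n_1,1,n_3)$. Hoffman's duality enters only through Kina's proofs of these identities.

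The argument then runs as follows. Equate the two expressions for $\za{(k_1,k_2,k_3)^\vee}$. Replace all depth-three star values by non-star values. Pass from Kina's increasing convention by reversal, which carries no sign in even weight, and substitute $i\mapsto b-i$. This gives the claim directly, so the generating-series comparison of your Step~3 is unnecessary.
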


\begin{proof}
Hoffman's duality for finite multiple zeta
values~\cite{HoffmanFinite} states that
$\zeta^\star_{\mathcal A}(\boldsymbol k)
=-\zeta^\star_{\mathcal A}(\boldsymbol k^\vee)$, where
$\zeta^\star_{\mathcal A}$ denotes the star version and
$\boldsymbol k^\vee$ is the Hoffman dual, obtained by writing every
entry as a sum of ones and exchanging all commas and plus signs.
Building on this, Kina proves the two identities
\begin{align*}
 \zeta_{\mathcal F}\bigl((k_1,k_2,k_3)^\vee\bigr)
 &=\zeta^\star_{\mathcal F}(k_1,k_2,k_3)\\
 &\quad-(-1)^{k_1+k_3}\sum_{i=1}^{k_2-1}
 \binom{k_1+i}{k_1}\binom{k_2+k_3-i}{k_3}
 Z_{\mathcal F}(k_1+i)\,Z_{\mathcal F}(k_2+k_3-i),\\
 \zeta_{\mathcal F}\bigl((k_1,k_2,k_3)^\vee\bigr)
 &=(-1)^{k_2-1}
 \sum_{\substack{n_1+1+n_3=k_1+k_2+k_3\\n_1,n_3\geq1}}
 \binom{n_1}{k_1}\binom{n_3}{k_3}\,
 \zeta_{\mathcal F}\bigl((n_1,1,n_3)^\vee\bigr)
\end{align*}
for both the symmetric and the finite multiple zeta values
$\mathcal F\in\{\mathcal S,\mathcal A\}$, together with the
equality of $\zeta_{\mathcal F}\bigl((k_1,1,k_3)^\vee\bigr)$ and
$\zeta^\star_{\mathcal F}(k_1,1,k_3)$, see
\cite[Lemma~4.1, equation~(5.2) and Section~5]{Kina}. Here
$Z_{\mathcal A}(m)=\pi_{\mathcal A}(\zzf m)$ is the Bernoulli class
$(B_{p-m}/m)_p$.
Combine the two identities and expand all star values into finite
multiple zeta values. The contraction terms in these expansions have
depth at most two and even weight $k$, and therefore vanish by
\cref{prop:low-depth}. Hence each star value equals the
corresponding finite multiple zeta value. Translating Kina's index
convention into ours by reversal \eqref{eq:finite-reversal}, with the
substitution $i\mapsto b-i$ in the product sum, then yields exactly
the image of \eqref{eq:formal-Hoffman-depth-three} under
$\pi_{\mathcal A}$.
\end{proof}

\begin{cor}\label{cor:finite-harmonic}
For every even $k\geq4$, the harmonic identity \eqref{eq:formal-harmonic}
holds for finite multiple zeta values.
\end{cor}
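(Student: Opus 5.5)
This is a direct combination of the two preceding results. \Cref{prop:finite-hoffman-duality} shows that the image of \eqref{eq:formal-Hoffman-depth-three} under $\pi_{\mathcal A}$ holds in $\ZA$ for every even $k\geq4$. \Cref{prop:harmonic-from-hoffman} derives \eqref{eq:formal-harmonic} from \eqref{eq:formal-Hoffman-depth-three}, and its final sentence asserts that the same implication holds for finite multiple zeta values. So the plan is simply to chain these two statements, after checking that the implication really transfers to $\ZA$.

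Concretely, I would rerun the proof of \cref{prop:harmonic-from-hoffman} in $\ZA$ and audit each ingredient:
\begin{enumerate}[(i)]
\item The generating-series form \eqref{eq:hoffman-summed} uses the Hoffman-type identity, now supplied by \cref{prop:finite-hoffman-duality}.
\item It also uses $\pi_{\mathcal A}(\zzf u)=0$ for even $u$ and for $u=1$. These hold because $\pi_{\mathcal A}$ is an algebra homomorphism (\cref{prop:realizations}) and the vanishings hold already in $\ZAf$ by \cref{prop:low-depth}.
\item The cyclic identity \eqref{eq:cyclic-specialized} comes from stuffle products with vanishing depth-one factors, plus vanishing of even-weight depth-two values. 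Both transfer through $\pi_{\mathcal A}$.
\item The symmetries of $T(U,V)$ and the vanishing of $\sum_{u+v=k-1}\za{u,1,v}$ come from reversal and the lemma on restricted sums. Both are consequences of the defining relations of $\ZAf$, hence valid in $\ZA$.
\item The final evaluation of the products $\za{i,a}\za{j,b}$ uses the image of \eqref{eq:formal-double-evaluation}.
\end{enumerate}

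The step that needs care is making precise that the argument is the implication ``\eqref{eq:formal-Hoffman-depth-three} in $\ZA$ $\Rightarrow$ \eqref{eq:formal-harmonic} in $\ZA$''. I would apply $\pi_{\mathcal A}$ to every formal identity proved unconditionally in \cref{sec:formal-finite,sec:parity}. I would then observe that the only conditional input, \eqref{eq:formal-Hoffman-depth-three}, enters linearly in \eqref{eq:hoffman-summed}. It can therefore be replaced by its $\pi_{\mathcal A}$-image without change to the algebra that follows.

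Extracting the coefficient of $X^rY^{k-r}$ in the image of \eqref{eq:harmonic-boxed} then gives \eqref{eq:formal-harmonic} for finite multiple zeta values, for every even $k\geq4$ and $2\leq r\leq k-2$.
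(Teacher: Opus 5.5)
Your proposal is correct and matches the paper's proof, which obtains the corollary by combining \cref{prop:finite-hoffman-duality} (the Hoffman-type identity holds in $\ZA$) with the finite-realization clause of \cref{prop:harmonic-from-hoffman}. Your audit of the ingredients spells out why that clause holds: the Hoffman-type identity is the only conditional input, and every other step is an unconditional identity in $\ZAf$ carried over by the algebra homomorphism $\pi_{\mathcal A}$.
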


\begin{proof}
Combine \cref{prop:finite-hoffman-duality} with
\cref{prop:harmonic-from-hoffman}.
\end{proof}

We do not know whether \cref{conj:beta-identities} follows in
arbitrary weight from the defining relations of $\ZAf$. By exact
rational row reduction in the quotient of the weight-graded piece of
$\h^1_*$ by the stuffle ideal generated by
\eqref{eq:formal-linear-shuffle}, we verified it by computer for every
even weight $k\leq18$, and \eqref{eq:formal-harmonic} then also holds in
these weights by \cref{prop:harmonic-from-hoffman}.

We will use the following restricted sums. They follow directly from
the formal sum formula, reversal, and the product with $\zaf1=0$.

\begin{lem}\label{lem:restricted-sum}
For every even $k\geq4$,
\begin{align}\label{eq:restricted-sum}
 \sum_{a+b=k-1}\zaf{1,a,b}
 =\sum_{a+b=k-1}\zaf{a,1,b}
 =\sum_{a+b=k-1}\zaf{a,b,1}=0,
\end{align}
where the summation variables are positive.
\end{lem}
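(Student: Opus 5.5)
We prove the three vanishing statements in \eqref{eq:restricted-sum} in the order outer sums first, middle sum last. We use three facts: the formal sum formula \eqref{eq:formal-sum} in depths two and three, reversal \eqref{eq:formal-reversal} in the even weight $k$, and the vanishing of depth-one values from \cref{prop:low-depth}.

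First, reversal in even weight gives
\[
 \zaf{1,a,b}=\zaf{b,a,1}.
\]
Relabeling $(a,b)\mapsto(b,a)$ then shows that the first and third sums in \eqref{eq:restricted-sum} are equal. Call this common value $S_1$, and write $S_2=\sum_{a+b=k-1}\zaf{a,1,b}$ for the middle sum.

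Next, expand $0=\zaf1\sum_{a+b=k-1}\zaf{a,b}$ by the stuffle product \eqref{eq:stuffle}. The depth-three terms, in which the $1$ is inserted in each of the three positions, contribute $S_1+S_2+S_1=2S_1+S_2$. The two contraction terms give $\sum_{a+b=k-1}\bigl(\zaf{a+1,b}+\zaf{a,b+1}\bigr)$. These are depth-two values of even weight $k$, so they vanish by \cref{prop:low-depth}(ii). Hence
\[
 2S_1+S_2=0.
\]

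Finally, we use the depth-three sum formula \eqref{eq:formal-sum}:
\[
 \sum_{a+b+c=k}\zaf{a,b,c}=0.
\]
The goal is to extract the restricted sums from it. The step I expect to be the main obstacle is to find a second independent relation between $S_1$ and $S_2$. My first attempt is the linear shuffle relation \eqref{eq:formal-linear-shuffle} with $w=z_1$ and $v$ equal to the full depth-two sum $\sum_{a+b=k-1}z_az_b$. This yields a combination of depth-three values containing $S_1$ (from the prepended and appended $z_1$) together with sums of the form $\sum\zaf{u,v,b}$ and $\sum\zaf{a,u,v}$ with $u+v$ fixed. Summing these latter sums over all splittings produces full sums over all depth-three indices of weight $k$, each weighted by a multiplicity. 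I would reduce them using the full depth-three sum formula, together with the identity obtained from $\zaf1\,\zaf{a,b}$ already used above.

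If this bookkeeping does not close, I would instead apply Hoffman's symmetric-sum argument directly: $\sum_{a+b=k-1}z_az_b$ summed with $z_1$ inserted is the stuffle product $z_1*\Bigl(\sum z_az_b\Bigr)$ minus contractions, and the analogous product with a weighted sum would isolate $S_2$. Either route should give a second equation, say $S_1+\lambda S_2=0$ with $\lambda\neq2$. Combined with $2S_1+S_2=0$, this forces $S_1=S_2=0$. A quick sanity check in weight $4$, where everything reduces to $\zaf{1,1,2}$, $\zaf{1,2,1}$ and $\zaf{2,1,1}$ with $\zaf{1,1,2}=\zaf{2,1,1}$, should confirm the coefficients before carrying out the general computation.
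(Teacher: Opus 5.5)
Your first attempt is the paper's argument, and the bookkeeping you worry about closes at once. After summing the depth $1+2$ linear shuffle relation over $a+b=k-1$, each of $\sum_{u+v=a+1}\zaf{u,v,b}$ and $\sum_{u+v=b+1}\zaf{a,u,v}$ runs over every depth-three index of weight $k$ exactly once (via $a=u+v-1$, resp.\ $b=u+v-1$). The relation therefore reads $2F+2S_1=0$ with $F=\sum_{a+b+c=k}\zaf{a,b,c}=0$ by the sum formula, so $S_1=0$ directly (your $\lambda=0$), then $S_2=0$ from your stuffle identity $2S_1+S_2=0$, and the Hoffman-type fallback is unnecessary.
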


\begin{proof}
Let $F$ be the sum of all depth-three values of weight $k$, and denote
the three sums in \eqref{eq:restricted-sum} by $L,M,R$. The formal sum
formula gives $F=0$. Summing the depth $1+2$ linear shuffle relation
gives $2F+L+R=0$, while reversal gives $L=R$. Hence $L=R=0$.
Finally, the sum of $0=\zaf1\zaf{a,b}$ over $a+b=k-1$ gives
$L+M+R=0$. The contraction terms have depth two and even weight and
therefore vanish. Thus $M=0$.
\end{proof}

The next lemma gives a well-definedness criterion for
\eqref{eq:direct-beta-rules}.

\begin{lem}\label{lem:direct-beta-certificate}
Assume that \cref{conj:beta-identities} holds in weight $k$. Then the
first rule in \eqref{eq:direct-beta-rules} defines a linear map
\[
 \beta_k^{\mathrm f}:\Dtk\longrightarrow\Fil_4\ZAfw{k}.
\]
Its image is $\Fil_3\ZAfw{k}$.
\end{lem}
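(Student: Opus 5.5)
Since the $H_{r,s}$ span $\Dtk$ by \cref{prop:H-spans-Dtilde}, the plan is to construct a linear map $\gamma:\Dk\to\ZAfw{k}$ directly on the generators $Z_{a,b},P_{a,b},Z_k$. We will then show that $\gamma$ kills $\mathcal P_k$ and that $\gamma(H_{r,s})=\zaf{r-1,1,s-1,1}$. This avoids checking the $H$-relations one by one, since any relation among the $H_{r,s}$ is automatically respected by a map defined on $\Dtk$.

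The construction comes from \eqref{eq:four-one-product}. For odd $r,s$, the value $\zaf{r-1,1,s-1,1}$ equals $\tfrac12 rs\,\zzf r\zzf s$ plus $\tfrac12(\zaf{1,r-1,s}-\zaf{1,s-1,r})$. This matches \eqref{eq:H-source-symmetry} and suggests the following two assignments:
\begin{itemize}
\item $P_{r,s}\mapsto \zzf r\zzf s$ for all $r+s=k$, which is zero unless $r,s\geq3$ are both odd;
\item $Z_{a,b}\mapsto$ a combination of the leading-one triple values $\zaf{u,1,v}$ plus a product term, chosen so that $\gamma(Z_{a,b})+\gamma(Z_{b,a})=\gamma(P_{a,b})$.
\end{itemize}
The natural candidate is to let $Z_{a,b}$ correspond to the coordinates $\sum_{u+v=k-1}\bigl(\binom{v}{a}\text{-type coefficients}\bigr)\zaf{u,1,v}$. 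These are suggested by the second sum in \eqref{eq:formal-harmonic}, i.e.\ Kina's depth-three elements after reversal, together with half of a product correction. Every depth-three value reduces to the $\zaf{n,1,m}$ and products by \eqref{eq:formal-Hoffman-depth-three}, so we work in the span of $T(U,V)$ and the products.

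We then verify the defining relations of $\Dk$ under $\gamma$.
\begin{itemize}
\item The stuffle half, $P_{r,s}=Z_{r,s}+Z_{s,r}+Z_k$, holds by construction together with the symmetry $T(U,V)=T(V,U)$, $T(-U,-V)=-T(U,V)$ from \cref{lem:restricted-sum} and reversal.
\item The shuffle half, $P_{r,s}=\sum\bigl[\binom{a-1}{r-1}+\binom{a-1}{s-1}\bigr]Z_{a,b}$, is the real content. Written with generating series, it should reduce exactly to the harmonic identity \eqref{eq:formal-harmonic}, i.e.\ \eqref{eq:harmonic-boxed}. That identity is available under \cref{conj:beta-identities} by \cref{prop:harmonic-from-hoffman}. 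Concretely, substituting $\gamma(Z)(X,Y)$ into the shuffle relation in its functional form $Z(X+Y,Y)+Z(X+Y,X)$ should produce the three terms $T(Y,Y-X)$, $T(X,X-Y)$ and the $f_uf_v$ product.
\end{itemize}
Next, $\gamma$ kills $Z_k$, the even $P_{r,s}$ (since $\zzf{\text{even}}=0$), and $Z_{1,k-1}$, the last by \cref{lem:restricted-sum}. Finally we compute $\gamma(H_{r,s})$ from \eqref{eq:H-source-element} and compare with \eqref{eq:four-one-product}, using \eqref{eq:formal-Hoffman-depth-three} to rewrite $\zaf{1,r-1,s}$ in leading-one form.

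For the image, surjectivity onto $\Fil_3\ZAfw{k}$ means we must show that the $\zaf{n,1,m}$ and the products $\zzf r\zzf s$ are reached. Every depth-three value lies in their span by \eqref{eq:formal-Hoffman-depth-three}. The products are images of the $P_{r,s}$. Each $\zaf{n,1,m}$ should arise as a combination of $\gamma(Z_{a,b})$ by inverting the binomial transform, which is triangular. Since $\Fil_4=\Fil_3$ by \cref{thm:strong-parity-four}, the image equals $\Fil_3\ZAfw{k}$, and the map lands in $\Fil_4\ZAfw{k}$ as stated.

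The main obstacle is choosing the exact formula for $\gamma(Z_{a,b})$ and matching the shuffle relation to \eqref{eq:harmonic-boxed} coefficientwise. I would first determine $\gamma(Z_{a,b})$ by solving $\gamma(H_{r,s})=\zaf{r-1,1,s-1,1}$ together with the stuffle half. I would then check the shuffle half as a generating-series identity.
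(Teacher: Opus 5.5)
Your outline has the same skeleton as the paper's proof: realize $\Dk$ on its generators, check descent to $\Dtk$, evaluate $H_{r,s}$ through \eqref{eq:four-one-product} and \eqref{eq:formal-Hoffman-depth-three}, and obtain the image by binomial inversion. But the proof consists of the explicit choice of $\gamma(Z_{a,b})$, which you leave open, and the two things you do commit to are wrong.

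\textbf{First, the boundary.} Put $\mu_k=\frac12\sum_{n\ \mathrm{odd}}\zzf n\zzf{k-n}$. Suppose $Z_k\mapsto0$, $Z_{1,k-1}\mapsto0$ and $P_{r,k-r}\mapsto\zzf r\zzf{k-r}$ for $2\leq r\leq k-2$. Subtracting the two relations of $\Dk$ for $\{1,k-1\}$ gives $\sum_{a=1}^{k-1}\gamma(Z_{a,k-a})=0$. Pairing $a$ with $k-a$ then forces $\gamma(P_{1,k-1})=-\mu_k$. Your rule $P_{1,k-1}\mapsto\zzf1\zzf{k-1}=0$ would therefore require $\mu_k=0$, which is not available: already $\mu_8=\zzf3\zzf5$ maps to $\zf3\zf5$ under $\varphi$.

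Equivalently, consider the natural Kina-type coordinates $x_r=(-1)^r\sum_{n=1}^{k-2}\bigl(\binom nr-\frac12\binom{k-1}r\bigr)D_n$, with $D_n=\zaf{n,1,k-n-1}-\zzf n\zzf{k-n}$. They do realize $\Dk$ with every $P_{r,s}\mapsto\zzf r\zzf s$, but they send $Z_{1,k-1}$ to $\mu_k$. So \cref{lem:restricted-sum} does not kill $Z_{1,k-1}$. The paper subtracts $\delta_{r,1}\mu_k$ from the first coordinate, which gives $y_1=0$, $y_{k-1}=-\mu_k$ and $P_{1,k-1}\mapsto-\mu_k$.

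\textbf{Second, you have the roles of the two halves reversed.} For these coordinates the shuffle half is the elementary one. It follows from the inversion identity $\sum_a(-1)^a\binom{a-1}{r-1}\binom na=(-1)^r$ for $n\geq r$ (and $0$ for $n<r$). One combines this with $\sum_nD_n=-2\mu_k$ and $\sum_{n\geq r}D_n+\sum_{n\geq s}D_n=-2\mu_k-\zzf r\zzf s$, both consequences of \cref{lem:restricted-sum} and reversal.

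The stuffle half $y_r+y_{k-r}=\zzf r\zzf{k-r}$ is not a consequence of $T(U,V)=T(V,U)$ and $T(-U,-V)=-T(U,V)$. It is exactly the harmonic identity \eqref{eq:formal-harmonic} in its Vandermonde form $\sum_n\bigl(\binom nr+\binom ns-\frac12\binom kr\bigr)D_n=-\zzf r\zzf s$. This is the relation Kina left open, and one of the two places where \cref{conj:beta-identities} enters, through \cref{prop:harmonic-from-hoffman}. If you instead antisymmetrize to force the stuffle half, the shuffle check needs both of these inputs.

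Nor can you recover the coordinates by solving $\gamma(H_{r,s})=\zaf{r-1,1,s-1,1}$ together with the stuffle half. By \eqref{eq:H-source-symmetry} and \eqref{eq:four-one-symmetry}, the conditions for $(r,s)$ and $(s,r)$ add up to an automatic identity, so the system is underdetermined.

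What is missing is precisely the paper's explicit certificate $y_r$, with its shift $-\frac12\binom{k-1}r$ and boundary term $-\delta_{r,1}\mu_k$, together with the two verifications just described.
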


\begin{proof}
We verify compatibility by lifting the prescribed values to the standard
source generators. Put
\[
 \mu_k=\frac12
 \sum_{\substack{1\leq n\leq k-1\\n\ \mathrm{odd}}}
 \zzf n\zzf{k-n}.
\]
For $1\leq r\leq k-1$, define
\begin{align}\label{eq:direct-beta-certificate}
 y_r=(-1)^r\sum_{n=1}^{k-2}
 \left(\binom nr-\frac12\binom{k-1}r\right)
 \left(\zaf{n,1,k-n-1}-\zzf n\zzf{k-n}\right)
 -\delta_{r,1}\mu_k.
\end{align}

Assign $Z_k\mapsto0$ and $Z_{r,k-r}\mapsto y_r$ for
$1\leq r\leq k-1$, and abbreviate
$D_n=\zaf{n,1,k-n-1}-\zzf n\zzf{k-n}$.
We first evaluate the shuffle side
$\sum_{a=1}^{k-1}\bigl[\binom{a-1}{r-1}+\binom{a-1}{s-1}\bigr]y_a$ of
\eqref{eq:formal-double-shuffle}, where $s=k-r$. The inversion
identity
\[
 \sum_{a\geq1}(-1)^a\binom{a-1}{r-1}\binom na=
 \begin{cases}
  (-1)^r,&n\geq r,\\
  0,&n<r,
 \end{cases}
\]
applied in \eqref{eq:direct-beta-certificate} gives
\[
 \sum_{a=1}^{k-1}\binom{a-1}{r-1}y_a
 =(-1)^r\sum_{n=r}^{k-2}D_n
 -\frac{(-1)^r}2\sum_{n=1}^{k-2}D_n
 -\delta_{r,1}\mu_k.
\]
By \cref{lem:restricted-sum} and reversal,
\begin{align}\label{eq:boundary-sums}
 \sum_{n=1}^{k-2}D_n=-2\mu_k,
 \qquad
 \sum_{n=r}^{k-2}D_n+\sum_{n=s}^{k-2}D_n=-2\mu_k-\zzf r\zzf s,
\end{align}
so the shuffle side equals
$-(-1)^r\zzf r\zzf s-(\delta_{r,1}+\delta_{s,1})\mu_k$, that is,
$\zzf r\zzf{k-r}$ for $2\leq r\leq k-2$ (both sides vanish for even
$r$) and $-\mu_k$ for $\{r,k-r\}=\{1,k-1\}$. On the other hand, a
Vandermonde convolution
$\sum_i\binom ui\binom v{r-i}=\binom kr$,
followed by \cref{lem:restricted-sum}, rewrites
\eqref{eq:formal-harmonic} as
\begin{align}\label{eq:harmonic-binomial-direct}
 \sum_{n=1}^{k-2}
 \left(\binom nr+\binom ns-\frac12\binom kr\right)D_n
 =-\zzf r\zzf s.
\end{align}
Since
$\binom{k-1}r+\binom{k-1}{s}=\binom kr$, comparing with
\eqref{eq:direct-beta-certificate} gives
$y_r+y_{k-r}=\zzf r\zzf{k-r}$ for
$2\leq r\leq k-2$. At the boundary,
\cref{lem:restricted-sum} and reversal give
\[
 \sum_{n=1}^{k-2}\left(n-\frac{k-1}{2}\right)D_n=-\mu_k,
\]
which together with the first sum in \eqref{eq:boundary-sums},
substituted in \eqref{eq:direct-beta-certificate}, gives
\[
 y_1=0,\qquad y_{k-1}=-\mu_k.
\]
Thus the stuffle and shuffle values agree in every case. Sending each
product symbol to this common value defines a realization of $\Dk$.
By construction $Z_k\mapsto0$, and the boundary calculation gives
$Z_{1,k-1}\mapsto y_1=0$. Each even product symbol $P_{r,k-r}$ maps to
$\zzf r\zzf{k-r}=0$, so the realization descends to $\Dtk$.

Moreover,
\[
 P_{r,k-r}\longmapsto\zzf r\zzf{k-r}
 \qquad(2\leq r\leq k-2).
\]
By
\eqref{eq:four-one-product},
\begin{align}\label{eq:four-one-bridge}
 2\zaf{r-1,1,s-1,1}={}&rs\zzf r\zzf s\notag\\
 &+\zaf{1,r-1,s}-\zaf{1,s-1,r}.
\end{align}
Substituting \eqref{eq:direct-beta-certificate} in
\eqref{eq:H-source-element}, using
$y_r+y_s=\zzf r\zzf s$, and applying
\eqref{eq:formal-Hoffman-depth-three} with
$(a,b,c)=(1,r-1,s)$ and $(1,s-1,r)$ gives
\[
 H_{r,s}\longmapsto\frac12\left(
 rs\zzf r\zzf s+\zaf{1,r-1,s}-\zaf{1,s-1,r}
 \right).
\]
By \eqref{eq:four-one-bridge}, this is
$\zaf{r-1,1,s-1,1}$. Thus the descended map is the map prescribed
in \eqref{eq:direct-beta-rules}.

It remains to determine its image. Add $\mu_k$ back to the first
coordinate in \eqref{eq:direct-beta-certificate}, and denote the
resulting coordinates by $x_r$. Since $x_{k-1}=-\mu_k$, the spans of
the $x_r$ and the $y_r$ agree. For $1\leq r\leq k-2$, put
\[
 U_r=(-1)^rx_r+\binom{k-1}{r}x_{k-1}.
\]
Then
\[
 U_r=\sum_{n=1}^{k-2}\binom nr
 \left(\zaf{n,1,k-n-1}-\zzf n\zzf{k-n}\right),
\]
and binomial inversion gives
\[
 \zaf{n,1,k-n-1}-\zzf n\zzf{k-n}
 =\sum_{r=n}^{k-2}(-1)^{r-n}\binom rnU_r.
\]
Thus every difference on the left belongs to the image. For
$2\leq n\leq k-2$, \eqref{eq:harmonic-binomial-direct} gives
$\zzf n\zzf{k-n}=x_n+x_{k-n}$. Hence
$\zaf{n,1,k-n-1}$ belongs to the image. This is also true for $n=1$,
since $\zzf 1=0$. Finally,
\eqref{eq:formal-Hoffman-depth-three} expresses every depth-three value
in terms of these values and the products $\zzf n\zzf{k-n}$. The image
therefore contains $\Fil_3\ZAfw{k}$. Conversely, the image is spanned
by the values $\zaf{r-1,1,s-1,1}$, which belong to $\Fil_3\ZAfw{k}$ by
\cref{cor:explicit-true-single-product}. The image is therefore
$\Fil_3\ZAfw{k}$.
\end{proof}

\begin{proof}[Proof of \cref{main:double-zeta-surjection}]
Part~(i) is \cref{prop:H-spans-Dtilde}. For part~(ii),
\cref{lem:direct-beta-certificate} shows that the rule in the statement is
well defined and its image is $\Fil_3\ZAfw{k}$. Strong parity gives
$\Fil_3\ZAfw{k}=\Fil_4\ZAfw{k}$. Since the $H_{r,s}$ span the source, the
image is also the span of the $\zaf{r-1,1,s-1,1}$.
\end{proof}

\begin{proof}[Proof of \cref{main:classical-beta}]
By \cref{prop:finite-hoffman-duality}, the identity
\eqref{eq:formal-Hoffman-depth-three} holds for finite multiple zeta
values, and by \cref{cor:finite-harmonic} so does
\eqref{eq:formal-harmonic}. These two identities are the only
consequences of \cref{conj:beta-identities} used in
\cref{lem:direct-beta-certificate}. Hence the construction there and
the proof of \cref{main:double-zeta-surjection} apply verbatim after
the realization $\pi_{\mathcal A}$, with no assumption on $k$. Finally, the $\frac k2-2$ values
$\za{r-1,1,s-1,1}$ span $\Fil_4\ZAw{k}$, and
$\dim_{\QQ}\Fil_4\ZAw{k}\leq\dim_{\QQ}\Dtk
=\frac k2-2-\dim_{\QQ}\mathcal S_k$ by
\cref{prop:dimension-Dtilde}, so these values satisfy at least
$\dim_{\QQ}\mathcal S_k$ linearly independent relations.
\end{proof}

\begin{rem}\label{rem:kina-comparison}
For $2\leq r\leq k-2$, the finite realization of $y_r$ agrees, after
reversal, with the element $Z_{\mathcal A}(k-r,r)$ of Kina, and the
evaluation of the shuffle side above corresponds to his shuffle
relation~\cite{Kina}. The corresponding stuffle relation is left open
in~\cite{Kina}. Here the harmonic identity of
\cref{cor:finite-harmonic} yields the stuffle evaluation
$y_r+y_{k-r}=\zzf r\zzf{k-r}$, which makes the assignment a
realization of $\Dk$.
\end{rem}

Assuming that \cref{conj:beta-identities} holds,
\cref{prop:dimension-Dtilde} shows that the formal part of
\cref{main:double-zeta}, that $\beta_k^{\mathrm f}$ is an isomorphism,
holds if and only if
\begin{align}\label{eq:sharp-depth-four-bound}
 \dim_{\QQ}\Fil_4\ZAfw{k}\geq
 \frac{k}{2}-2-\dim_{\QQ}\mathcal S_k.
\end{align}
The period-polynomial theorem determines the dimension of the source,
but gives no lower bound for the universal formal finite quotient.
Since $\pi_{\mathcal A}$ maps $\Fil_4\ZAfw{k}$ onto $\Fil_4\ZAw{k}$,
the corresponding statement for $\beta_k$ is the same lower bound for
$\dim_{\QQ}\Fil_4\ZAw{k}$, which is stronger.

\subsection{Explicit formulas and period polynomials}
\label{subsec:explicit-period}

The elements $H_{r,s}$ can be written explicitly in terms of the
odd--odd classes. For odd $m$ with $1\leq m\leq k-3$ and $r+s=k$, set,
following~\cite{GKZ},
\begin{align}\label{eq:lambda-prime}
 \lambda'_{m,k-m}(r,s)
 =\sum_{\ell=0}^{r-2}
 \binom{k-2-\ell}{m-1}\binom{r-1}{\ell}B_{k-m-\ell-1},
\end{align}
where $B_\nu$ denotes the $\nu$th Bernoulli number with
$B_1=-\frac12$, and $B_\nu=0$ for $\nu<0$. Since $Z_k=0$ in $\Dtk$,
the identity \cite[(7)]{GKZ} becomes
\begin{align}\label{eq:GKZ-seven-reduced}
 Z_{m+1,k-m-1}
 =-\frac{2}{m}
 \sum_{\substack{r+s=k\\r,s\ \odd}}
 \lambda'_{m,k-m}(r,s)\,Z_{r,s}
 \qquad\text{in }\Dtk,
\end{align}
where $Z_{1,k-1}=0$ and, by the sum formula from the proof of
\cref{prop:period-polynomial-relations},
$Z_{k-1,1}=-\sum_{3\leq a\leq k-3,\ a\ \odd}Z_{a,k-a}$.

\begin{prop}\label{prop:H-in-Z}
Let $k\geq4$ be even and let $r,s\geq3$ be odd with $r+s=k$. Then, in
$\Dtk$,
\begin{align}\label{eq:H-in-Z}
 2H_{r,s}
 ={}&(rs-r+1)Z_{r,s}+(rs+s-1)Z_{s,r}
 +\sum_{\substack{3\leq\ell\leq k-3\\\ell\ \odd}}
 \left(\binom{k-\ell}{s}-\binom{k-\ell}{r}\right)P_{\ell,k-\ell}
 \notag\\
 &+2\sum_{\substack{r'+s'=k\\r',s'\ \odd}}
 \left(\frac{r+1}{r}\,\lambda'_{r,s}(r',s')
      -\frac{s+1}{s}\,\lambda'_{s,r}(r',s')\right)Z_{r',s'},
\end{align}
where $P_{a,b}=Z_{a,b}+Z_{b,a}$.
\end{prop}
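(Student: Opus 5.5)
Compare the definition \eqref{eq:H-source-element} of $H_{r,s}$ with \eqref{eq:H-in-Z}. The first line of \eqref{eq:H-in-Z} is the first line of $2H_{r,s}$ with the two terms $(s+1)Z_{s+1,r-1}-(r+1)Z_{r+1,s-1}$ removed, and the sum over odd $\ell$ of the $P_{\ell,k-\ell}$ is already identical. So the claim reduces to the identity
\[
 (s+1)Z_{s+1,r-1}-(r+1)Z_{r+1,s-1}
 =2\sum_{\substack{r'+s'=k\\r',s'\ \odd}}
 \left(\frac{r+1}{r}\lambda'_{r,s}(r',s')-\frac{s+1}{s}\lambda'_{s,r}(r',s')\right)Z_{r',s'}
 \qquad\text{in }\Dtk .
\]

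I would obtain this by two applications of \eqref{eq:GKZ-seven-reduced}. Since $r$ and $s$ are odd with $3\le r,s\le k-3$, the choices $m=r$ and $m=s$ are admissible values of $m$ in \eqref{eq:GKZ-seven-reduced}. Taking $m=s$ gives
\[
 Z_{s+1,r-1}=-\frac2s\sum\lambda'_{s,r}(r',s')Z_{r',s'},
\]
and taking $m=r$ gives
\[
 Z_{r+1,s-1}=-\frac2r\sum\lambda'_{r,s}(r',s')Z_{r',s'}.
\]
Multiplying the first by $s+1$, the second by $-(r+1)$, and adding produces exactly the right-hand side above. Here $\lambda'_{m,k-m}$ with $m=r$ is $\lambda'_{r,s}$, and with $m=s$ it is $\lambda'_{s,r}$.

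Substituting this identity back into $2H_{r,s}$ then gives \eqref{eq:H-in-Z}.

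The only point needing care is that \eqref{eq:GKZ-seven-reduced} really holds in $\Dtk$. It is GKZ's identity (7) in $\Dk$, which expresses $Z_{m+1,k-m-1}$ through odd–odd symbols and a multiple of $Z_k$. That $Z_k$-term vanishes in the quotient, and the displayed conventions for $Z_{1,k-1}$ and $Z_{k-1,1}$ are harmless because the sum still runs over all odd pairs.

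I would check the normalisation of $\lambda'$ against \cite[(7)]{GKZ}: the index $m$ and the Bernoulli convention $B_1=-\tfrac12$. That check is the main (mild) obstacle; once it is confirmed, the proposition is purely a substitution.
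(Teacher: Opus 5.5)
Your proposal is correct and matches the paper's proof: it substitutes \eqref{eq:GKZ-seven-reduced} with $m=s$ and $m=r$ into the two even--even terms $(s+1)Z_{s+1,r-1}$ and $-(r+1)Z_{r+1,s-1}$ of $2H_{r,s}$ and leaves all other terms unchanged. The normalisation check you mention is not needed here, because the paper takes \eqref{eq:GKZ-seven-reduced} (GKZ's identity (7) with the $Z_k$-term dropped) as already established in $\Dtk$.
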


\begin{proof}
Apply \eqref{eq:GKZ-seven-reduced} with $m=s$ and with $m=r$ to the two
even--even terms $(s+1)Z_{s+1,r-1}$ and $-(r+1)Z_{r+1,s-1}$ in
\eqref{eq:H-source-element}.
\end{proof}

Substituting $Z_{1,k-1}=0$ and the sum formula for $Z_{k-1,1}$ in
\eqref{eq:H-in-Z} expresses each $H_{r,s}$ as an explicit rational
linear combination
\begin{align}\label{eq:H-in-Z-matrix}
 2H_{r,s}=\sum_{\substack{3\leq r'\leq k-3\\r'\ \odd}}
 A_{r,r'}\,Z_{r',k-r'}
\end{align}
of the classes $Z_{r',k-r'}$ with $r'$ odd and $3\leq r'\leq k-3$. We
write $A_k=(A_{r,r'})$ for the
resulting square matrix, whose rows and columns are indexed by the
odd integers $3\leq r,r'\leq k-3$.

Since the $H_{r,s}$ satisfy relations in $\Dtk$ whenever
$\mathcal S_k\neq0$, the expansion \eqref{eq:H-in-Z-matrix} is not
unique in $\Dtk$. It becomes unique after passing to the quotient
\begin{align}\label{eq:Dbar}
 \Dbk=\Dk/\bigl(\QQ Z_k+\QQ Z_{1,k-1}\bigr),
\end{align}
of which $\Dtk$ is the further quotient by the even product symbols.
As before, we read the right-hand side of \eqref{eq:H-source-element}
in $\Dk$, with $P_{a,b}=Z_{a,b}+Z_{b,a}+Z_k$, so that the $H_{r,s}$
make sense in every quotient of $\Dk$.

\begin{lem}\label{lem:Dbar-basis}
Let $k\geq6$ be even.
\begin{enumerate}[(i)]
\item The classes $Z_{r,k-r}$ with $r$ odd and $3\leq r\leq k-3$ form
a basis of $\Dbk$.
\item The identity \eqref{eq:H-in-Z-matrix} holds in $\Dbk$, and
$A_k$ is the unique matrix with this property.
\end{enumerate}
\end{lem}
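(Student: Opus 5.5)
For part~(i) I will start from the fact, used in the proof of \cref{prop:dimension-Dtilde}, that for even $k$ the $k/2$ odd--odd symbols $Z_{r,k-r}$ ($r$ odd, $1\leq r\leq k-1$) form a basis of $\Dk$. This is the statement of \cite{GKZ} that in even weight every $Z_{a,b}$ with $a,b$ even, and also $Z_k$, is a unique rational combination of the odd--odd symbols. Via \cite[(7)]{GKZ} this expresses $Z_{m+1,k-m-1}$ through the $\lambda'$ coefficients plus a multiple of $Z_k$, and $Z_k$ itself is a combination of the odd--odd symbols.

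In $\Dbk$ the quotient is by two vectors, $Z_k$ and $Z_{1,k-1}$. Both are expressed in the odd--odd basis, so I must check that they are linearly independent there. The symbol $Z_{1,k-1}$ is itself a basis vector. The vector $Z_k$ cannot be proportional to it: by the sum formula in $\Dk$ (double shuffle for the pair $\{1,k-1\}$), $Z_{1,k-1}-Z_k$ equals $\sum_{a}Z_{a,k-a}$, which has nonzero coefficient on $Z_{3,k-3}$ once one expands. Alternatively, I can use the argument of \cref{prop:dimension-Dtilde} with the $\kappa$-dependent realization: the image of $Z_{1,k-1}$ depends on $\kappa$ while that of $Z_k$ does not, and $Z_k\neq0$ there.

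Hence $\dim\Dbk=k/2-2$. I then need the remaining classes $Z_{r,k-r}$ with $3\leq r\leq k-3$ odd to span. I will show that $Z_{k-1,1}$ lies in their span modulo $Z_k$ and $Z_{1,k-1}$. Take the double shuffle relation for $\{1,k-1\}$,
\[
 Z_{1,k-1}+Z_{k-1,1}+Z_k=Z_{k-1,1}+\sum_a Z_{a,k-a}.
\]
Modulo $Z_k,Z_{1,k-1}$ this gives $\sum_{a=1}^{k-1}Z_{a,k-a}=0$. The even--even terms are rewritten through \cite[(7)]{GKZ}, which is valid in $\Dbk$ since $Z_k=0$ there. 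This yields a relation $cZ_{k-1,1}+(\text{span of }Z_{r,k-r},\ 3\le r\le k-3)=0$. The main obstacle is showing $c\neq0$. Since $k/2$ basis vectors modulo a $2$-dimensional subspace leave exactly $k/2-2$ dimensions, an alternative is to prove directly that $Z_k$ and $Z_{1,k-1}$ together with the $Z_{r,k-r}$, $3\leq r\leq k-3$ odd, span $\Dk$. Equivalently, $Z_k$ has nonzero coefficient on $Z_{k-1,1}$ in the odd--odd basis. I would check this by evaluating on a realization that kills all $Z_{r,s}$ with $r$ odd, $r\leq k-3$, but not $Z_{k-1,1}$, or by reading the coefficient off \cite[(7)]{GKZ} summed into $Z_k$ (via $P_{r,s}$ with $r,s$ even) as a Bernoulli-number expression. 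The restriction $k\geq6$ ensures that the index range $3\le r\le k-3$ is nonempty, so that such a realization makes sense.

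For part~(ii), I will read \eqref{eq:H-source-element} in $\Dk$ and project to $\Dbk$. Replacing $P_{\ell,k-\ell}$ by $Z_{\ell,k-\ell}+Z_{k-\ell,\ell}$ and the two even--even terms by \eqref{eq:GKZ-seven-reduced} is valid in $\Dbk$, because \cite[(7)]{GKZ} holds in $\Dk$ up to $Z_k$. Then setting $Z_{1,k-1}=0$ and $Z_{k-1,1}=-\sum Z_{a,k-a}$, which is exactly the relation just derived in $\Dbk$, reproduces the computation of \cref{prop:H-in-Z} and the definition of $A_k$. Hence \eqref{eq:H-in-Z-matrix} holds in $\Dbk$. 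Uniqueness of $A_k$ is immediate from part~(i), since the coordinates of each $2H_{r,s}$ in a basis are unique.
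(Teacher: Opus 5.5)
Your outline follows the paper's route: the odd--odd basis of $\Dk$, independence of $Z_k$ and $Z_{1,k-1}$ via the $\kappa$-dependent realization, a dimension count, and the $\{1,k-1\}$ sum formula. But the step that carries part~(i) is left open. You reduce spanning to $c\neq0$, equivalently to $Z_k$ having a nonzero coefficient on $Z_{k-1,1}$ in the odd--odd basis, and then only list ways one might check it. The realization you describe is, up to scale, the coordinate functional of $Z_{k-1,1}$. Writing it down requires its values on the even--even symbols and on $Z_k$, which is exactly the information you are after. The Bernoulli-number computation through \cite[(7)]{GKZ} is not carried out, and nothing in your outline shows the resulting expression is nonzero. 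Its value is also sensitive to the multiples of $Z_k$ that the reduced form of \cite[(7)]{GKZ} discards. Your first independence argument (the $Z_{3,k-3}$-coefficient ``once one expands'') has the same defect. The sign is also off there: the $\{1,k-1\}$ relation gives $\sum_aZ_{a,k-a}=Z_{1,k-1}+Z_k$.

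The missing input is the even sum formula \cite[Theorem~1]{GKZ}, $\sum_{a\ \even}Z_{a,k-a}=\frac34Z_k$ in $\Dk$, which is what the paper uses. Subtracting it from the sum formula gives $\sum_{a\ \odd}Z_{a,k-a}=Z_{1,k-1}+\frac14Z_k$. Hence in $\Dbk$ the even--even terms drop out without any rewriting, and $Z_{k-1,1}=-\sum_{3\leq r\leq k-3,\ r\ \odd}Z_{r,k-r}$, i.e.\ your relation holds with coefficient $1$ on $Z_{k-1,1}$. In $\Dk$ it reads $Z_k=4\sum_{3\leq a\leq k-1,\ a\ \odd}Z_{a,k-a}$, which also settles the independence of $Z_k$ and $Z_{1,k-1}$ directly.

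The same input is needed for part~(ii). The matrix $A_k$ is defined by substituting exactly this expression for $Z_{k-1,1}$. Before the lemma, that substitution had only been justified in $\Dtk$, where the even product symbols vanish. What you obtained in $\Dbk$ is $\sum_{a=1}^{k-1}Z_{a,k-a}=0$ with the even--even terms still present. Calling the substitution ``exactly the relation just derived'' is therefore unjustified until the even part is known to vanish in $\Dbk$. With the even sum formula in place, the rest of your part~(ii), including uniqueness from part~(i), agrees with the paper.
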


\begin{proof}
The elements $Z_k$ and $Z_{1,k-1}$ are linearly independent in $\Dk$,
as in the proof of \cref{prop:dimension-Dtilde}, so
$\dim_{\QQ}\Dbk=\frac k2-2$. The odd--odd classes $Z_{a,k-a}$ with
$a$ odd form a basis of $\Dk$~\cite[Theorem~2]{GKZ}. The relation
\eqref{eq:formal-double-shuffle} for $\{r,s\}=\{1,k-1\}$ gives the sum
formula
\[
 \sum_{a=1}^{k-1}Z_{a,k-a}=Z_{1,k-1}+Z_k
 \qquad\text{in }\Dk,
\]
while the even sum formula of~\cite[Theorem~1]{GKZ} gives
$\sum_{a\ \even}Z_{a,k-a}=\frac34Z_k$. Subtracting, the odd part
$\sum_{a\ \odd}Z_{a,k-a}$ lies in $\QQ Z_k+\QQ Z_{1,k-1}$, so that
\begin{align}\label{eq:odd-sum-Dbar}
 Z_{k-1,1}=-\sum_{\substack{3\leq r\leq k-3\\r\ \odd}}Z_{r,k-r}
 \qquad\text{in }\Dbk.
\end{align}
Hence the $\frac k2-2$ classes $Z_{r,k-r}$ with $r$ odd and
$3\leq r\leq k-3$ span $\Dbk$, and they form a basis by dimension.
This proves part~(i),
and with it the uniqueness in part~(ii). For the identity, note that
the identity \cite[(7)]{GKZ} holds in $\Dk$ up to a rational multiple
of $Z_k$, so that \eqref{eq:GKZ-seven-reduced}, and with it
\eqref{eq:H-in-Z}, hold already in $\Dbk$. Substituting
$Z_{1,k-1}=0$ and \eqref{eq:odd-sum-Dbar} gives
\eqref{eq:H-in-Z-matrix} in $\Dbk$.
\end{proof}

\begin{prop}\label{prop:Dk-basis}
Let $k\geq6$ be even. The following are equivalent:
\begin{enumerate}[(i)]
\item the elements
\[
 H_{r,s}\quad(r,s\geq3\ \odd,\ r+s=k),
 \qquad Z_k,\qquad Z_{1,k-1}
\]
form a basis of $\Dk$,
\item $\det A_k\neq0$.
\end{enumerate}
Both statements hold whenever $\mathcal S_k=0$.
\end{prop}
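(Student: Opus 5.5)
The equivalence reduces to linear algebra in the quotient $\Dbk=\Dk/(\QQ Z_k+\QQ Z_{1,k-1})$, using \cref{lem:Dbar-basis}.

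The first step is to record that $Z_k$ and $Z_{1,k-1}$ are linearly independent in $\Dk$, as in the proof of \cref{prop:dimension-Dtilde}. Since the odd--odd symbols form a basis of $\Dk$, we have $\dim_\QQ\Dk=k/2$. Hence $\dim_\QQ\Dbk=k/2-2$, which equals the number of elements $H_{r,s}$.

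The next step uses that a family consisting of $Z_k$, $Z_{1,k-1}$ and $k/2-2$ further elements has exactly $k/2=\dim\Dk$ members. Such a family is a basis of $\Dk$ if and only if the images of the further elements form a basis of $\Dbk$. For the forward direction, independence of the whole family forces independence of the images modulo $\QQ Z_k+\QQ Z_{1,k-1}$. For the converse, lift a basis of $\Dbk$ and add the two generators of the kernel. So (i) is equivalent to: the classes of $H_{r,s}$ form a basis of $\Dbk$.

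Now apply \cref{lem:Dbar-basis}. The classes $Z_{r',k-r'}$ with $r'$ odd and $3\le r'\le k-3$ form a basis of $\Dbk$. By \eqref{eq:H-in-Z-matrix}, which holds in $\Dbk$, the coordinate vectors of $2H_{r,s}$ in this basis are the rows of $A_k$. A family of $\dim\Dbk$ vectors is a basis exactly when its coordinate matrix is invertible, so (i) holds if and only if $\det A_k\neq0$.

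For the last claim, assume $\mathcal S_k=0$. By \cref{prop:H-spans-Dtilde} the $H_{r,s}$ span $\Dtk$. By \cref{prop:dimension-Dtilde}, $\dim\Dtk=k/2-2$, so they are linearly independent in $\Dtk$. Now $\Dtk$ is a quotient of $\Dbk$, so any relation among the $H_{r,s}$ in $\Dbk$ would descend to a relation in $\Dtk$. Hence the $H_{r,s}$ are linearly independent in $\Dbk$. Since there are $\dim\Dbk$ of them, they form a basis of $\Dbk$, and by the equivalence above both (i) and (ii) hold.

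The only delicate point is ensuring that the $H_{r,s}$, read in $\Dk$ with $P_{a,b}=Z_{a,b}+Z_{b,a}+Z_k$, have the classes in $\Dbk$ given by \eqref{eq:H-in-Z-matrix}. This is exactly part (ii) of \cref{lem:Dbar-basis}, which I take as given; everything else is routine dimension counting.
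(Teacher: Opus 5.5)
Your proposal is correct and follows the paper's proof essentially step by step. You reduce (i) to the classes of the $H_{r,s}$ forming a basis of $\Dbk$, read off the criterion $\det A_k\neq0$ from \cref{lem:Dbar-basis}, and in the case $\mathcal S_k=0$ combine $\dim\Dtk=\dim\Dbk$ with \cref{prop:H-spans-Dtilde}. The only difference is phrasing: the paper says the surjection $\Dbk\to\Dtk$ is an isomorphism so spanning lifts, while you pull back linear independence, which is the same argument.
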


\begin{proof}
There are $\frac k2$ elements listed in (i), and
$\dim_{\QQ}\Dk=\frac k2$ by \cite[Theorem~2]{GKZ}. Since $Z_k$ and
$Z_{1,k-1}$ are linearly independent and span the kernel of the
projection $\Dk\to\Dbk$, the elements in (i) form a basis of $\Dk$ if
and only if the classes of the $H_{r,s}$ form a basis of $\Dbk$. By
\cref{lem:Dbar-basis}, the latter holds if and only if
$\det A_k\neq0$.

Suppose that $\mathcal S_k=0$. Then
$\dim_{\QQ}\Dtk=\frac k2-2=\dim_{\QQ}\Dbk$ by
\cref{prop:dimension-Dtilde}, so the surjection $\Dbk\to\Dtk$ is an
isomorphism. The classes of the $H_{r,s}$ span $\Dtk$ by
\cref{prop:H-spans-Dtilde}, hence they span $\Dbk$, and therefore
$\det A_k\neq0$.
\end{proof}

\begin{conj}\label{conj:detA}
For every even $k\geq6$ one has $\det A_k\neq0$. Equivalently, the
elements $H_{r,s}$ together with $Z_k$ and $Z_{1,k-1}$ form a basis
of $\Dk$.
\end{conj}

By \cref{prop:Dk-basis}, the content of \cref{conj:detA} lies in the
cusp weights, where it is the statement that every relation of
\cref{prop:period-polynomial-relations} is realized coefficientwise
by \eqref{eq:H-in-Z-matrix}. In each weight it is one explicit determinant condition with binomial
and Bernoulli number entries, and we confirmed it by exact computation
for every even $k\leq100$. Whenever it holds, every
class $Z_{r,s}$ has a unique expression in the elements $H_{r,s}$,
$Z_k$ and $Z_{1,k-1}$. In particular
\[
 Z_{r,k-r}=2\sum_{\substack{3\leq r'\leq k-3\\r'\ \odd}}
 (A_k^{-1})_{r,r'}\,H_{r',k-r'}
 \qquad\text{in }\Dbk,
\]
and expressions in $\Dtk$ are obtained by projection. We do not know
a closed formula for $A_k^{-1}$, a difficulty already encountered
in~\cite[Section~3]{KZ} for a related triangular system. For $p\in W_k^{\even}$, write
\[
 p(X+Y,Y)=\sum_{r+s=k}\binom{k-2}{r-1}q_{r,s}X^{r-1}Y^{s-1}.
\]
For odd $r$ with $3\leq r\leq k-3$, let $\widehat c_r(p)$ be the determinant
of the matrix obtained from $A_k$ by replacing the row indexed by $r$ by
\[
 \bigl(2(q_{r',k-r'}-q_{k-1,1})\bigr)_{r'} ,
\]
where $r'$ runs over the odd integers from $3$ to $k-3$.

\begin{thm}\label{thm:cofactor-relations}
Let $k\geq6$ be even and $p\in W_k^{\even}$. Then
\begin{align}\label{eq:cofactor-relation}
 \sum_{\substack{3\leq r\leq k-3\\r\ \mathrm{odd}}}
 \widehat c_r(p)H_{r,k-r}=0
 \qquad\text{in }\Dtk.
\end{align}
Consequently,
\begin{align}\label{eq:cofactor-finite-relation}
 \sum_{\substack{3\leq r\leq k-3\\r\ \mathrm{odd}}}
 \widehat c_r(p)\za{r-1,1,k-r-1,1}=0
\end{align}
holds unconditionally for finite multiple zeta values. If
\cref{conj:beta-identities} holds in weight $k$, the analogous relation with
$\za{\cdot}$ replaced by $\zaf{\cdot}$ holds in $\ZAf$.

If $\det A_k\neq0$, these relations, as $p$ varies, span the space of all
relations among the $H_{r,s}$, and they vanish identically only for
$p\in\QQ(X^{k-2}-Y^{k-2})$.
\end{thm}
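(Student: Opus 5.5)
The proof is a linear-algebra argument built on Cramer's rule. We work in the basis of $\Dbk$ from \cref{lem:Dbar-basis}. Index rows and columns of $A_k$ by odd $r,r'$ with $3\leq r,r'\leq k-3$. Let $v=(v_{r'})$ be the row vector with $v_{r'}=2(q_{r',k-r'}-q_{k-1,1})$. By \cref{prop:period-polynomial-relations},
\[
 \sum_{r'}v_{r'}Z_{r',k-r'}=0\qquad\text{in }\Dtk.
\]
Write $R_r$ for the row of $A_k$ indexed by $r$. Then \eqref{eq:H-in-Z-matrix} says that $2H_{r,k-r}$ is the image of $R_r$ under the linear map $\Phi:\QQ^{k/2-2}\to\Dtk$, $e_{r'}\mapsto Z_{r',k-r'}$. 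This identity holds in $\Dbk$ and hence in its quotient $\Dtk$.

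The key step is the standard cofactor identity for the $(m+1)$ vectors $R_r$ and $v$ in the $m$-dimensional space $\QQ^m$, where $m=\frac k2-2$. Consider the $(m+1)\times(m+1)$ matrix whose first $m$ rows are the rows of $A_k$ and whose last row is $v$, and append a column. Alternatively, argue directly from multilinearity of the determinant in rows. Define
\[
 \widehat c_r=\det\bigl(A_k\text{ with row }r\text{ replaced by }v\bigr).
\]
Then one has the vector identity
\[
 \sum_r\widehat c_r\,R_r=(\det A_k)\,v.
\]
This is Cramer's rule when $\det A_k\neq0$, and it holds identically as a polynomial identity in the entries, so it also holds when $\det A_k=0$. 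To verify it, take the $j$-th coordinate: $\sum_r\widehat c_r(A_k)_{r,j}$ is obtained by expanding $\widehat c_r$ along row $r$. One gets $\sum_r\sum_i v_iC_{r,i}(A_k)_{r,j}=\sum_iv_i\delta_{ij}\det A_k$, where $C_{r,i}$ are the cofactors of $A_k$. Applying $\Phi$ gives
\[
 2\sum_r\widehat c_rH_{r,k-r}=(\det A_k)\,\Phi(v)=0
\]
in $\Dtk$, which is \eqref{eq:cofactor-relation}.

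The finite consequence \eqref{eq:cofactor-finite-relation} follows by applying $\beta_k$ from \cref{main:classical-beta}, which is unconditional. The formal version follows by applying $\beta_k^{\mathrm f}$, which exists under \cref{conj:beta-identities} by \cref{lem:direct-beta-certificate}.

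For the last assertion, assume $\det A_k\neq0$. Then $\widehat c(p)=(\widehat c_r(p))_r$ is computed as $\widehat c=(\det A_k)\,v\,A_k^{-1}$. So $p\mapsto\widehat c(p)$ is the composite of $p\mapsto v(p)$ with an invertible linear map. Now consider a relation $\sum c_rH_{r,k-r}=0$ in $\Dtk$. Multiplying out gives a relation $\sum_{r'}(cA_k)_{r'}Z_{r',k-r'}=0$ among the odd classes. By the completeness part of \cref{prop:period-polynomial-relations}, this equals $v(p)$ for some $p$, and hence $c=\widehat c(p)/\det A_k$. So the relations span the whole relation space. Moreover $\widehat c(p)=0$ exactly when $v(p)=0$, which by \cref{prop:period-polynomial-relations} happens exactly when $p\in\QQ(X^{k-2}-Y^{k-2})$.

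The only delicate point is bookkeeping. One must check that the identity \eqref{eq:H-in-Z-matrix} from $\Dbk$ may be projected to $\Dtk$, which is legitimate because $\Dtk$ is a quotient of $\Dbk$. One must also check that the factor $2$ and the $q_{k-1,1}$ shift match \cref{prop:period-polynomial-relations} exactly; no further obstacle arises.
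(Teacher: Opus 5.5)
Your proposal is correct and follows the paper's proof essentially step for step. Both use the adjugate identity $\sum_r\widehat c_r(p)A_{r,r'}=2\det(A_k)(q_{r',k-r'}-q_{k-1,1})$, transport it via \eqref{eq:H-in-Z-matrix} from $\Dbk$ to $\Dtk$, get vanishing from \cref{prop:period-polynomial-relations}, and then apply $\beta_k$ or $\beta_k^{\mathrm f}$. The one minor difference is the spanning claim: the paper combines $\widehat c(p)=2\det(A_k)\,b\,A_k^{-1}$ with the dimension count \eqref{eq:H-relation-dimension}, whereas you pull an arbitrary relation among the $H_{r,s}$ back to the $Z$-classes and use the completeness half of \cref{prop:period-polynomial-relations} directly; both arguments work.
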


\begin{proof}
All indices below are odd and lie between $3$ and $k-3$. Let $C_{r,r'}$ be
the cofactor of $A_k$ at $(r,r')$. Expansion along the replaced row gives
\[
 \widehat c_r(p)=2\sum_{r'}(q_{r',k-r'}-q_{k-1,1})C_{r,r'}.
\]
The adjugate identity therefore yields
\[
 \sum_r\widehat c_r(p)A_{r,r'}
 =2\det(A_k)(q_{r',k-r'}-q_{k-1,1}).
\]
Using \eqref{eq:H-in-Z-matrix}, we obtain in $\Dbk$
\[
 \sum_r\widehat c_r(p)\,2H_{r,k-r}
 =2\det(A_k)\sum_{r'}
 (q_{r',k-r'}-q_{k-1,1})Z_{r',k-r'}.
\]
The right-hand side vanishes after projection to $\Dtk$ by
\cref{prop:period-polynomial-relations}, proving
\eqref{eq:cofactor-relation}. Applying the unconditional map $\beta_k$
gives \eqref{eq:cofactor-finite-relation}, and applying $\beta_k^{\mathrm f}$
gives the formal relation whenever that map is defined.

If $\det A_k\neq0$, writing
$b_{r'}=q_{r',k-r'}-q_{k-1,1}$, the coefficient row is
\[
 (\widehat c_r(p))_r=2\det(A_k)\,(b_{r'})_{r'}A_k^{-1}.
\]
The final assertions now follow from
\cref{prop:period-polynomial-relations} and
\eqref{eq:H-relation-dimension}.
\end{proof}

\begin{ex}\label{ex:weight-twelve-transfer-relation}
Let $k=12$ and $p=X^2Y^2(X^2-Y^2)^3$. One has
\[
 A_{12}=
 \begin{pmatrix}
  -39&-\frac{23}{3}&-\frac{35}{3}&-\frac{289}{9}\\[2pt]
  -90&-\frac{278}{5}&-\frac{310}{7}&-\frac{1594}{15}\\[2pt]
  90&\frac{628}{5}&\frac{800}{7}&\frac{1594}{15}\\[2pt]
  93&\frac{23}{3}&\frac{35}{3}&\frac{775}{9}
 \end{pmatrix},
 \qquad \det A_{12}=48672.
\]
The determinants in \cref{thm:cofactor-relations} are
\[
 (\widehat c_3(p),\widehat c_5(p),\widehat c_7(p),\widehat c_9(p))
 =(128,72,144,-16).
\]
After dividing by $8$, \eqref{eq:cofactor-finite-relation} becomes
\eqref{eq:intro-weight-twelve-relation}. In the formal algebra the same
relation holds because \cref{conj:beta-identities} has been verified in
weight~$12$.
\end{ex}

\end{document}